\documentclass[12pt]{article}
\usepackage{amssymb,amsmath,amsfonts,amssymb}
\usepackage{amssymb,mathrsfs,bm}
\usepackage[round,comma]{natbib}
\usepackage[latin1]{inputenc}
\usepackage{color}
\usepackage[latin1]{inputenc}
\usepackage{graphicx,float}
\usepackage{subfig}
\usepackage[all]{xy}
\usepackage{soul}
\usepackage{url}
\usepackage{anysize}
\marginsize{1.5cm}{1.5cm}{1.5cm}{3.0cm}

\newtheorem{theorem}{Theorem}

\newtheorem{corollary}[theorem]{Corollary}

\newtheorem{definition}[theorem]{Definition}
\newtheorem{example}[theorem]{Example}

\newtheorem{lemma}[theorem]{Lemma}

\newtheorem{proposition}[theorem]{Proposition}
\newtheorem{remark}[theorem]{Remark}

\newenvironment{proof}[1][Proof]{\noindent\textbf{#1.} }{\ \rule{0.5em}{0.5em}}

\numberwithin{equation}{section} \numberwithin{theorem}{section}

	\title{Comparing the reliability of coherent systems with heterogeneous, dependent and distribution-free components}
\author{ Nuria Torrado \\
	\small{Departamento de An\'{a}lisis Econ\'{o}mico: Econom\'{i}a Cuantitativa}\\
	\small{Universidad Aut\'{o}noma de Madrid, Spain}}
\date{}
	
\begin{document}

		\maketitle

{\bf Abstract: }
In recent years, there has been a growing interest in comparing the reliability of systems with dependent components. 
Most of the recent works consider Archimedean copulas to define the dependency structure of systems and examine specific familiy of distributions.		
In this work, we provide distribution-free results to compare, in the usual stochastic order
under some majorization conditions, 
coherent systems with heterogeneous and dependent components where the dependency structure can be defined by any copula.
Applications of our main results for specific copulas and different families of marginal distributions are also given.

\bigskip

	{\bf MSC classifications:} 60E15; 60K10; 62N05.

System reliability, coherent systems, dependent components, parallel systems, stochastic comparisons, distorted distributions, copulas,
majorization orders. 

\bigskip

\section{Introduction}
One of the most important issues in reliability theory is to understand 
the reliability of systems and their individual components. 
Mathematical theory of reliability has been well established in \cite{BP96}.
Coherent systems represent a classical structure 
which plays an essential role
in reliability theory.
The general class of coherent systems encompasses different types of systems, such as $k$-out-of-$n$, consecutive $k$-out-of-$n$ and 
consecutive $k$-within-$m$-out-of-$n$, among others.
There are numerous applications for such systems in practice, for instance, telecommunications, oil pipeline systems,
industrial systems (see e.g. \cite{Elsayed}).

In the literature, there exist many researches about stochastic comparisons for the lifetime distributions of specific coherent systems. For instance, 
$k$-out-of-$n$ systems are investigated in \cite{TV12, DXH14, BKM20};
recent works for series and parallel systems are  \cite{LWM20,TO17,NTA-2018}; 
consecutive type systems are discussed in \cite{TK11, TO15, E2020, YCG20, MP20},
to name a few.
Much less studies have
been reported for general coherent systems (see \cite{NASS16, HF19, AS2020, HM2020, TAN21} and the references therein).

In recent years, attention has spread from systems with independent
components to the case of dependent components.
Most of works, on comparing the reliability of systems with dependent components, deal with series and parallel systems where 
the dependency structure used mainly is defined through Archimedean copulas.
In addition,  most of the corresponding results are obtained with respect to specific distributions,
for example, extended exponentials are studied in \cite{Barmalzan-2020}
or to specific families of distributions, namely, scale proportional hazards
(in \cite{FLL18}), 
exponentiated location-scale (in \cite{DK2020}),
among others.
However, to the best of our knowledge,
stochastic
comparisons of systems with dependent components
for a general family of distributions 
have not been considered in the literature so far.

In the present work, we consider coherent systems consisting of $n$ possibly dependent components whose dependency structure can be defined by any copula. 
We denote as $X_i$ the random lifetimes of the $i$th component
with distribution function $F\left( t,\lambda _{i}\right)$ which is
dependent on a parameter $\lambda _{i}$, for $i=1,\ldots ,n$.
Then, we compare the reliability
functions of two coherent systems whose components
are assumed to have dependent and not identically distributed lifetimes
in terms of the parameters vector
${\bm \lambda =(\lambda_1,\ldots,\lambda_n)}$ by using majorization concepts.
{Definition \ref{def_maj} in the Appendix introduces the notion of majorization and related orders used in this manuscript.}
 For
extensive and comprehensive details on the theory of majorization orders and
their applications, one may refer to \cite{Marshall:2011}.

Throughout, we use increasing and decreasing to denote monotone nondecreasing and monotone nonincreasing, respectively. By $a\overset{sign}{=}b$, we mean that $a$ and $b$ have the same sign.

The rest of the paper is organized as follows. We present our main results for 
the class of general coherent systems in Section \ref{sec_main}.
Next, in Section \ref{sec_arq}, we study the case in which the dependence structure of the system is defined by Archimedean copulas for parallel and series systems,
and, in Section \ref{sec_ind}, we investigate the independent case.
Finally, we give some concluding remarks in Section \ref{sec_conclusions}.

\section{General results}\label{sec_main}

In this section, we present our main results for general coherent systems with heterogeneous and dependent components.
{Specifically, let us consider a system consisting of $n$ components possibly dependent with lifetime
$T=\varphi(X_1,\ldots,X_n)$ where $X_i$ represents the lifetime of the $i$th component, for $i=1,\ldots,n$.
The distribution function of this coherent system can be represented as
\begin{equation}\label{eq_def2}
F_D (t)=D(F_1 (t),\ldots, F_n (t)),
\end{equation}
where $F_1,\ldots, F_n$ are the marginal distribution functions of the components
and the distortion function 
$D:[0,1]^{n}\rightarrow[0,1]$ is an increasing function satisfying
$D(0,\ldots,0)=0$ and $D(1,\ldots,1)=1$ (see \cite{NASS16, NA07} and the references therein).
Let us assume that the distribution function of the $i$th component
depends on a parameter $\lambda_i$, i.e., $F_i = F(\lambda_i)$, for $i=1,\ldots,n$. Then, \eqref{eq_def2} can be rewritten as
\[F_D (t)=D(F (t,\lambda_1),\ldots, F (t,\lambda_n)).\]
Similarly, there is an expression for the respective reliability function
\[\bar{F}_{D}(t)=\bar{D}(\bar{F}(t,\lambda_1),\ldots,\bar{F}(t,\lambda_n)),\]
where $\bar{D}(u_1,\ldots,u_n)=1-D(1-u_{1},\ldots,1-u_{n})$
is the multivariate dual distortion function.
When the components are dependent and identically distributed, i.e., $F_{1}=\cdots =F_n =F$, then $D(u,\ldots,u)=d(u)$ is an univariate distortion function such as $d:[0,1]\rightarrow[0,1]$ is increasing satisfying $d(0)=0$ and $d(1)=1$. Therefore, in this particular case, the distribution function of a coherent system is given by $F_{d}(t)=d(F(t))$ (see \cite{NASS13} for more details).}

{Let us define now the  usual stochastic order.
	We say that $X$ is smaller than $Y$ in the {\it usual stochastic order} (shortly written as $X\leq_{ST}Y$) if $F\geq G$ (i.e. this inequality holds for any $t$). This condition is equivalent to  $\bar F\leq \bar G$. For more details on stochastic comparisons see \cite{BRM-2015}.}
The following theorem provides the usual stochastic ordering between two coherent systems,
which are formed by two sets of heterogeneous and possibly dependent random variables
$(X_1,\ldots,X_n)$ and $(Y_1,\ldots,Y_n)$, respectively,
in terms of
the weak submajorization and supermajorization between the parameters vector of marginal distributions.

\begin{theorem}\label{th01}
	Let $F_{D_{1}}=D_{1}\left( F\left( \lambda _{1}\right) ,\ldots
	,F\left( \lambda _{n}\right) \right) $ and $F_{D_{2}}=D_{2}\left( G\left(
	\lambda _{1}^{\ast }\right) ,\ldots ,G\left( \lambda _{n}^{\ast }\right)
	\right) $. Suppose that $D_{1}$ is Schur-concave, $D_{1}\leq D_{2}$, $F\geq
	_{ST}G$ and $F\left(t, \lambda\right) $ is increasing (decreasing) and
	concave in $\lambda$. If 
	${\bm\lambda} \overset{w}{\geq}{\bm\lambda}^{*}
	\left( {\bm\lambda }\geq _{w}{\bm\lambda }^{\ast }\right) $
	then $F_{D_{1}}\geq _{ST}F_{D_{2}}$.
\end{theorem}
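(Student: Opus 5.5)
The plan is to prove $D_1(F(t,\lambda_1),\ldots,F(t,\lambda_n)) \geq D_2(G(t,\lambda_1^*),\ldots,G(t,\lambda_n^*))$ for every fixed $t$ through a chain of three inequalities:
\begin{align*}
D_1(F(t,\lambda_1),\ldots,F(t,\lambda_n)) &\geq D_1(F(t,\lambda_1^*),\ldots,F(t,\lambda_n^*)) \\
&\geq D_1(G(t,\lambda_1^*),\ldots,G(t,\lambda_n^*)) \\
&\geq D_2(G(t,\lambda_1^*),\ldots,G(t,\lambda_n^*)).
\end{align*}
The last two steps are routine. The third inequality holds because $D_1 \leq D_2$ is meant here in the direction that makes the conclusion true, namely $D_1(\mathbf u)\ge D_2(\mathbf u)$ pointwise, i.e.\ $F_{D_1}\ge F_{D_2}$ for common marginals. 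I will state explicitly that this is the reading of the hypothesis being used. The second inequality follows from $F \geq_{ST} G$, read (as in the paper's definition) as $F(t,\lambda)\ge G(t,\lambda)$ for all $t$ and all $\lambda$, together with the monotonicity of the distortion $D_1$ in each argument.

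The substantive step is the first inequality, which is a majorization argument applied to the composite function
\[
\psi(\lambda_1,\ldots,\lambda_n)=D_1(F(t,\lambda_1),\ldots,F(t,\lambda_n))
\]
for fixed $t$. I will invoke the standard composition result from \cite{Marshall:2011} (Table 3.B / Theorem 3.B.2 type). It says that if $\phi$ is increasing and Schur-concave, and $g$ is increasing and concave, then $\phi(g(x_1),\ldots,g(x_n))$ is increasing and Schur-concave. Likewise, if $\phi$ is increasing and Schur-concave and $g$ is decreasing and concave, then the composite is decreasing and Schur-concave.

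I will take $\phi = D_1$, which is increasing because it is a distortion and Schur-concave by hypothesis, and $g = F(t,\cdot)$. In the increasing case this makes $\psi$ increasing and Schur-concave. By the characterization of weak submajorization (Theorem 3.A.8 in \cite{Marshall:2011}), an increasing Schur-concave function satisfies $\psi(\mathbf x)\le\psi(\mathbf y)$ whenever $\mathbf x \overset{w}{\geq} \mathbf y$ is understood as weak supermajorization. So I have to match the paper's notation (Definition~\ref{def_maj}): $\bm\lambda\overset{w}{\geq}\bm\lambda^*$ must be the ordering under which increasing Schur-concave functions are larger at $\bm\lambda$. That gives $\psi(\bm\lambda)\ge\psi(\bm\lambda^*)$. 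The decreasing case is symmetric: $\psi$ is decreasing and Schur-concave, and the relevant order is $\bm\lambda \geq_w \bm\lambda^*$.

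If the composition theorem is not to be quoted, I will verify Schur-concavity of $\psi$ directly using Schur's condition on smooth approximations, or via the two-coordinate $T$-transform argument. Write $\lambda_i=\alpha x+(1-\alpha)y$ and $\lambda_j=(1-\alpha)x+\alpha y$. Concavity of $F(t,\cdot)$ gives $F(t,\lambda_i)\ge \alpha F(t,x)+(1-\alpha)F(t,y)$, and similarly for $\lambda_j$. Monotonicity of $D_1$ then reduces the claim to Schur-concavity of $D_1$ applied to the vector of convex combinations, which is majorized by $(F(t,x),F(t,y))$.

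I expect the main obstacle to be bookkeeping rather than mathematics. The key is to align the direction of the majorization symbols $\overset{w}{\geq}$ and $\geq_w$ with increasing versus decreasing Schur-concave functions, and to fix the direction of the hypothesis $D_1\le D_2$ that the chain actually needs.
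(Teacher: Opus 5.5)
There is a genuine gap: every inequality in your chain points the wrong way, and you compensate by reversing a hypothesis.

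The paper defines $X\le_{ST}Y$ by $F\ge G$. So the conclusion $F_{D_1}\ge_{ST}F_{D_2}$ means $F_{D_1}(t)\le F_{D_2}(t)$ for all $t$. Likewise the hypothesis $F\ge_{ST}G$ means $F(t,\lambda)\le G(t,\lambda)$, not $F\ge G$ as you assert.

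Having aimed at the reversed target, you then declare that $D_1\le D_2$ ``is meant'' as $D_1\ge D_2$. That is not an interpretation of the hypothesis but its reversal, and an argument built on it proves a different statement.

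Your majorization step is also reversed. There you fix the meaning of $\overset{w}{\geq}$ by what your argument needs instead of checking Definition~\ref{def_maj}. By part (b), $\bm\lambda\overset{w}{\geq}\bm\lambda^*$ says that the partial sums of the smallest entries of $\bm\lambda^*$ dominate those of $\bm\lambda$, i.e.\ $\bm\lambda^*\prec^{w}\bm\lambda$ in Marshall--Olkin notation. Theorem 3.A.8 then gives $\psi(\bm\lambda)\le\psi(\bm\lambda^*)$ for every increasing Schur-concave $\psi$. Similarly, $\bm\lambda\ge_w\bm\lambda^*$ gives the same inequality for decreasing Schur-concave $\psi$.

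Your first inequality is in fact false. Take $D_1(u_1,u_2)=u_1u_2$, $F(t,\lambda)=1-e^{-\lambda t}$, $t=1$, $\bm\lambda=(0.5,1.5)$ and $\bm\lambda^*=(1,1)$, so both weak orders hold. Then the left side is about $0.306$ and the right side about $0.400$.

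The repair is to reverse the whole chain, after which each hypothesis is used literally. Writing $F(t,\bm\lambda)$ for $(F(t,\lambda_1),\ldots,F(t,\lambda_n))$,
\[
D_1(F(t,\bm\lambda))\le D_1(F(t,\bm\lambda^*))\le D_2(F(t,\bm\lambda^*))\le D_2(G(t,\bm\lambda^*)).
\]
The three steps are justified as follows:
\begin{itemize}
\item The first is your composition (or $T$-transform) argument, which correctly shows that $\psi$ is monotone and Schur-concave, combined with Theorem 3.A.8 in the right direction.
\item The second is the hypothesis $D_1\le D_2$.
\item The third is monotonicity of $D_2$ together with $F\le G$.
\end{itemize}
This is the paper's proof. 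The paper obtains the first step by applying Theorem 5.A.2 to the vector $(F(t,\lambda_i))_i$ and then Theorem 3.A.8 to $D_1$, which is equivalent to your composition route. Swapping $F$ for $G$ under $D_1$ before passing to $D_2$, as in your ordering, works equally well.
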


\begin{proof} 
	Since $F\left( \lambda _{i}\right) $ is increasing (decreasing) and
	concave in $\lambda _{i}$, from Theorem 5.A.2 (ii or iv) in \cite{Marshall:2011}, we
	have $\left( F\left( \lambda _{1}\right) ,\ldots ,F\left( \lambda
	_{n}\right) \right) \overset{w}{\geq }\left( F\left( \lambda _{1}^{\ast
	}\right) ,\ldots ,F\left( \lambda _{n}^{\ast }\right) \right) $ if
	${\bm\lambda} \overset{w}{\geq}{\bm\lambda}^{*}
	\left( {\bm\lambda }\geq _{w}{\bm\lambda }^{\ast }\right) $. 
	Then, from Theorem 3.A.8. in \cite{Marshall:2011},
	we get that 
	\[ D_{1}\left( F\left( \lambda _{1}\right) ,\ldots
	,F\left( \lambda _{n}\right) \right) \leq D_{1}\left( F\left( \lambda
	_{1}^{\ast }\right) ,\ldots ,F\left( \lambda _{n}^{\ast }\right) \right) \]
	since $D_{1}$ is increasing (by definition) and Schur-concave (by
	assumption). Now, it is clear that $D_{1}\left( F\left( \lambda _{1}^{\ast
	}\right) ,\ldots ,F\left( \lambda _{n}^{\ast }\right) \right) \leq
	D_{2}\left( F\left( \lambda _{1}^{\ast }\right) ,\ldots ,F\left( \lambda
	_{n}^{\ast }\right) \right) $ by assumption $D_{1}\leq D_{2}$. Finally, from
	Proposition 2.3(i) in \cite{NASS16}, we know that 
	\[ D_{2}\left( F\left(
	\lambda _{1}^{\ast }\right) ,\ldots ,F\left( \lambda _{n}^{\ast }\right)
	\right) \leq D_{2}\left( G\left( \lambda _{1}^{\ast }\right) ,\ldots
	,G\left( \lambda _{n}^{\ast }\right) \right) \]
	if $F\left( \lambda
	_{i}^{\ast }\right) \leq G\left( \lambda _{i}^{\ast }\right) $ for $%
	i=1,\ldots ,n$, which holds by the assumption $F\geq _{ST}G$. This complete
	the proof.
\end{proof}

\begin{remark} 
	Observe that, if $F\left( \lambda\right) $ is not increasing
	nor decreasing in $\lambda$, but the rest of the assumptions of Theorem
	\ref{th01} holds ($D_{1}$ is Schur-concave, $D_{1}\leq D_{2}$, $F\geq _{ST}G$
	and $F\left( \lambda\right)$ concave in $\lambda$),
	then ${\bm \lambda} \overset{m}{\geq }{\bm\lambda }^{\ast }$ implies $%
	F_{D_{1}}\geq _{ST}F_{D_{2}}$.
\end{remark}

{
\begin{remark} 
It is worth mentioning that, combining Propositions 2.2(i) and 2.3(i) in \cite{NASS16}, we know that if $D_{1}\leq D_{2}$ $F\geq_{ST}G$ and $F\left(t, \lambda\right) $ is increasing (decreasing) in $\lambda$, then 
$\lambda_i\leq\lambda_{i}^{*}$ ($\lambda_i\geq\lambda_{i}^{*}$) for all $i=1,\ldots,n$ implies $F_{D_{1}}\geq _{ST}F_{D_{2}}$.
However, it is well known that the condition $\lambda_i\leq\lambda_{i}^{*}$ for all $i=1,\ldots,n$ is a stronger condition than the majorization order, which, in turn, implies both  
weak submajorization and supermajorization orders.
\end{remark}}

Theorem \ref{th01} can be applied to any coherent system with possibly dependent
components. In particular, for series systems defined as $X_{1:n}=\min
\left( X_{1},\ldots ,X_{n}\right) $, the reliability function is 
\begin{equation*}
	\bar{F}_{1:n}(t)=\widehat{C}\left( \bar{F}_{1}(t),\ldots ,%
	\bar{F}_{n}(t)\right) ,
\end{equation*}%
where $\widehat{C}$ is the survival copula, then in this case, $\bar{D}=%
\widehat{C}$. Hence, 
\[F_{1:n}(t)=1-\bar{F}_{1:n}(t)=1-\widehat{C}\left(
1-F_{1}(t),\ldots ,1-F_{n}(t)\right) ,\] 
that is, its multivariate distortion
function is 
$D\left( u_{1},\ldots ,u_{n}\right) =1-\widehat{C}\left(
1-u_{1},\ldots ,1-u_{n}\right) $. 
Analogously, for parallel systems defined
as $X_{n:n}=\max \left( X_{1},\ldots ,X_{n}\right) $, the distribution
function is 
\begin{equation}\label{eq_max}
	F_{n:n}(t)=C\left( F_{1}(t),\ldots ,F_{n}(t)\right) ,
\end{equation}%
where $C$ is the distributional copula, therefore, $D=C$. In the following
example, we show how Theorem \ref{th01} can be applied to compare the reliability
between series and parallel systems.

\begin{example}\label{ex_MPHR} 
	Let two random vectors $\left( X_{1},X_{2}\right) $ and 
	{$\left(Y_{1},Y_{2}\right)$} such as $X_{i}\sim F\left( t,\lambda _{i}\right) $ and 
	{$Y_{i}\sim G\left( t,\lambda _{i}^{\ast }\right) $}
	for $i=1,2$. 
	Assume the
	parallel and series systems with lifetimes 
	$X_{2:2}=\max \left(
	X_{1},X_{2}\right) $ and {$Y_{1:2}=\min \left( Y_{1},Y_{2}\right)$}, respectively, with two
	dependent components having the following Farlie-Gumbel-Morgenstern (FGM) copula:%
	\begin{equation*}
		C_{FGM}\left( u_{1},u_{2}\right) =u_{1}u_{2}\left( 1+\theta \left(
		1-u_{1}\right) \left( 1-u_{2}\right) \right),
	\end{equation*}%
	for $(u_{1},u_{2})\in[0,1]^{2}$ and $\theta \in \left[ -1,1\right] $. Then, the distribution functions are 
	$F_{X_{2:2}}=D_{1}\left( F\left( \lambda _{1}\right) ,F\left( \lambda
	_{2}\right) \right) $ and 
	$G_{Y_{1:2}}=D_{2}\left( G\left( \lambda _{1}^{\ast
	}\right) ,G\left( \lambda _{2}^{\ast }\right) \right) $
	where $D_{1}\left(
	u_{1},u_{2}\right) =C_{FGM}\left( u_{1},u_{2}\right) $ 
	and 
	\begin{eqnarray*}
		D_{2}\left( u_{1},u_{2}\right) &=&1-C_{FGM}\left( 1-u_{1},1-u_{2}\right) \\
		&=&1-\left( 1-u_{1}\right) \left( 1-u_{2}\right) \left( 1+\theta
		u_{1}u_{2}\right) .
	\end{eqnarray*}%
	Then, we have%
	\begin{eqnarray*}
		D_{2}-D_{1} &=&1-\left( 1-u_{1}\right) \left( 1-u_{2}\right) \left( 1+\theta
		u_{1}u_{2}\right) -\left( u_{1}u_{2}\left( 1+\theta \left( 1-u_{1}\right)
		\left( 1-u_{2}\right) \right) \right) \\
		&=&u_{1}\left( 1-u_{2}\right) +u_{2}\left( 1-u_{1}\right) +2\theta
		u_{1}u_{2}\left( u_{2}+u_{1}-u_{1}u_{2}-1\right) \\
		&=&u_{1}\left( 1-u_{2}\right) +u_{2}\left( 1-u_{1}\right) +2\theta
		u_{1}u_{2}\left( 1-u_{2}\right) \left( u_{1}-1\right) \\
		&=&u_{1}\left( 1-u_{2}\right) +u_{2}\left( 1-u_{1}\right) \left( 1-2\theta
		u_{1}\left( 1-u_{2}\right) \right) \geq 0,
	\end{eqnarray*}%
	when $-1\leq \theta \leq 0$. It is known that the FGM copula is
	Schur-concave for any $\theta\in[-1,1]$ (see \cite{N06}), i.e., $D_{1}=C_{FGM}$ is Schur-concave. Then,
	from Theorem \ref{th01}, we have that $X_{2:2}\geq _{ST}Y_{1:2}$ 
	if ${\bm\lambda} \overset{w}{\geq }%
	{\bm\lambda }^{\ast }\left( {\bm \lambda }\geq _{w}{\bm\lambda }^{\ast
	}\right) $, when $-1\leq \theta \leq 0$,
	for any distribution function $F\left( t,\lambda\right) $
	increasing (decreasing) and concave in $\lambda$. 
	In particular, let us assume that $F\left( t,\lambda\right) $ {and $G\left( t,\lambda^{*}\right) $}
	belong to the MPHR family which
	is defined
	in \eqref{MPHR} where $\bar H_{F}(t)=e^{- t}$ {and $\bar H_{G}(t)=e^{- 2t}$, respectively. Observe that $F\geq_{ST}G$}. 
	From Lemma \ref{lema01}(iii), we know that 
	$F\left( t,\lambda\right) $ is increasing and concave in $\lambda$
	for any $\alpha>0$.
		We take $\theta=-0.5$, ${\bm \lambda}=(0.7, 1.5)$ and ${\bm \lambda}^{*}=(1.1, 1.3)$. Note that ${\bm\lambda} \overset{w}{\geq }{\bm\lambda }^{\ast }$.
		{Therefore, all the conditions on Theorem \ref{th01} are satisfied}.
	Figure \ref{Figr_MPHR} plots the distribution functions of $X_{2:2}$ (solid line) and $Y_{1:2}$ (dashed line)
	for $\alpha=0.8$ (left) and $\alpha=1.6$ (right). 
	It is evident that $X_{2:2}\geq_{ST}Y_{1:2}$, which is according to
	Theorem \ref{th01}.
\end{example}

\begin{figure}[H]
	\centering
	\includegraphics[width=6 cm]{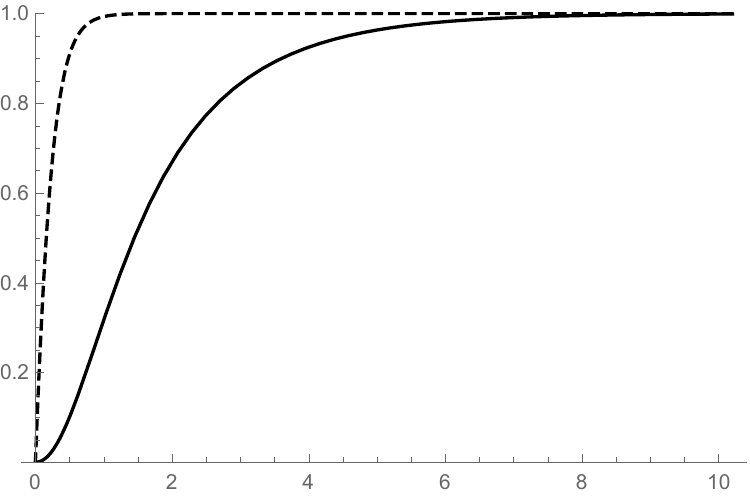}
	\includegraphics[width=6 cm]{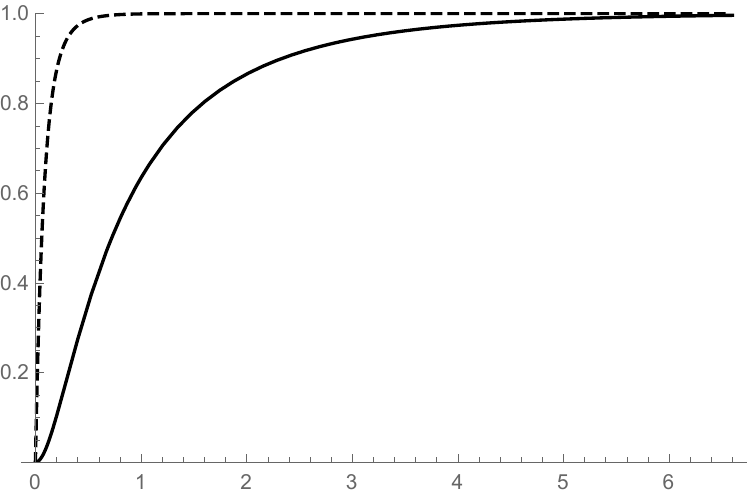}
	\caption{Plots of the distribution functions of $X_{2:2}$ (solid line)
		and $Y_{1:2}$  (dashed line)
	considered in Example \ref{ex_MPHR}
	with $\alpha=0.8$ (left) and $\alpha=1.6$ (right).}
	\label{Figr_MPHR}
\end{figure}

We can also obtain results for parallel systems consisting of $n$ dependent
components assembled by a copula different than the family of FGM copulas. In
particular, we can consider two parallel systems
whose dependence structure is defined by a distributional
Cuadras-Aug\'{e} bivariate copula given by 
\[{C}_{\theta}(u,v)=[min(u,v)]^{\theta}\, [uv]^{1-\theta},	\]
for $\theta\in[0,1]$ and $(u,v)\in[0,1]^{2}$. Observe that
the parameter $\theta $ measures the degree of stochastic dependence between
the two components. It is known that if $0\leq \theta _{1}\leq \theta
_{2}\leq 1$ (see \cite{N06}), then $C_{\theta _{1}}(u,v)\leq C_{\theta
	_{2}}(u,v)$. Then, if $F\geq _{ST}G$ and $F\left( \lambda\right) $ is
increasing (decreasing) and concave in $\lambda$, from Theorem \ref{th01}, we
have that ${\bm\lambda} \overset{w}{\geq }{\bm\lambda }^{\ast }\left( {%
	\bm\lambda }\geq _{w}{\bm\lambda }^{\ast }\right) $ implies $X_{n:n}\geq
_{ST}Y_{n:n}$. This example shows that the result given in Theorem \ref{th01} can
also used to compare systems with different levels of dependency. Similar
results can be obtained, for example, for the  
Plackett family of copulas which
is positively ordered or the Gumbel-Barnett family of copulas which is
negatively ordered, among others (see \cite{N06}).
The family of Archimedean copulas for parallel and series systems is studied in Section \ref{sec_arq} in detail.

In addition, Theorem \ref{th01} can be used to compare other types of coherent
systems. Let us see another example on how Theorem \ref{th01} can be applied.

\begin{example} 
	Let us consider two
	systems $T_{1}=\max \left( X_{1},X_{2},X_{3}\right) $ and $T_{2}=\min \left(
	X_{1},\max \left( X_{2},X_{3}\right) \right) $, both with independent
	components. From Table 1 in  \cite{NA07}, we know that $D_{1}\leq
	D_{2}$ where%
	\begin{equation*}
		D_{1}\left( u_{1},u_{2},u_{3}\right) =1-\bar{D}_{1}\left(
		1-u_{1},1-u_{2},1-u_{3}\right) =u_{1}u_{2}u_{3}
	\end{equation*}%
	and%
	\begin{equation*}
		D_{2}\left( u_{1},u_{2},u_{3}\right) =1-\bar{D}_{2}\left(
		1-u_{1},1-u_{2},1-u_{3}\right) =u_{1}+u_{2}u_{3}-u_{1}u_{2}u_{3}.
	\end{equation*}%
	It is easy to check $D_{1}$ is Schur-concave. Then, from Theorem \ref{th01}, 
	we get $T_{1}\geq _{ST}T_{2}$ if ${\bm\lambda} \overset{w}{\geq }{\bm\lambda }^{\ast
	}\left( {\bm\lambda }\geq _{w}{\bm\lambda }^{\ast }\right) $ for any
	distribution function $F\left( t,\lambda\right) $ increasing
	(decreasing) and concave in $\lambda$ as those studied in Lemma \ref{lema01}.
\end{example}

If all the conditions in Theorem \ref{th01} are satisfied except concavity of
$F$ with respect to a parameter, then 
coherent systems may not be ordered,
as we show in the following example.

\begin{example}\label{ex_power}
		Let two random vectors $\left( X_{1},X_{2}\right) $ and 
	$\left(X_{1}^{*},X_{2}^{*}\right) $ such as $X_{i}\sim F\left( t,\lambda _{i}\right) $ and 
	$X_{i}^{*}\sim F\left( t,\lambda _{i}^{\ast }\right) $,
	where $F$ is the power generalized Weibull distribution such as
	$$F\left( t,\lambda _{i}\right) =1-e^{1-(1+t^2)^{1/\lambda _{i}}}
	\quad \textup{with } \lambda _{i}>0 \textup{ and } t>0,$$
	for $i=1,2$. 
	After some computations, we get 
	\[\frac{\partial^2 F\left( t,\lambda\right)}{\partial \lambda^2 }
	\overset{sign}{=}
	2\lambda +\log (1+t^2)(1-(1+t^2)^{1/\lambda}).\]
	The above function changes the sign, so $F$ is not convex nor concave
	with respect to $\lambda$.
	We consider two parallel systems $X_{2:2}$ and $X^{*}_{2:2}$
	under two cases, when the components are independent and when they are assembled by a Clayton copula defined as follows
	\[C(u_1,u_2)=(u_1^{-\theta}+u_2^{-\theta}-1)^{-1/\theta}, \quad \theta>0
	\textup{ and } u_1, u_2\in[0,1]. \]
	For simplicity, we assume $D_1=D_2=C$. 	
	On the other hand, we consider that $\bm{\lambda}=(0.2, 0.6)$ and
	$\bm{\lambda}^{*}=(0.35, 0.45)$. It is easy to verify that 
	${\bm\lambda} \overset{m}{\geq }{\bm\lambda }^{\ast }$.	
	Figure \ref{Figr_power} plots the distribution functions of the two parallel systems for the independent case (left) and the dependent case (right) for $\theta=0.08$. In both cases, the distribution functions intersect and therefore the parallel systems are not ordered
	since $F$ is not concave in $\lambda$.	
\end{example}

\begin{figure}[H]
	\centering
	\includegraphics[width=6 cm]{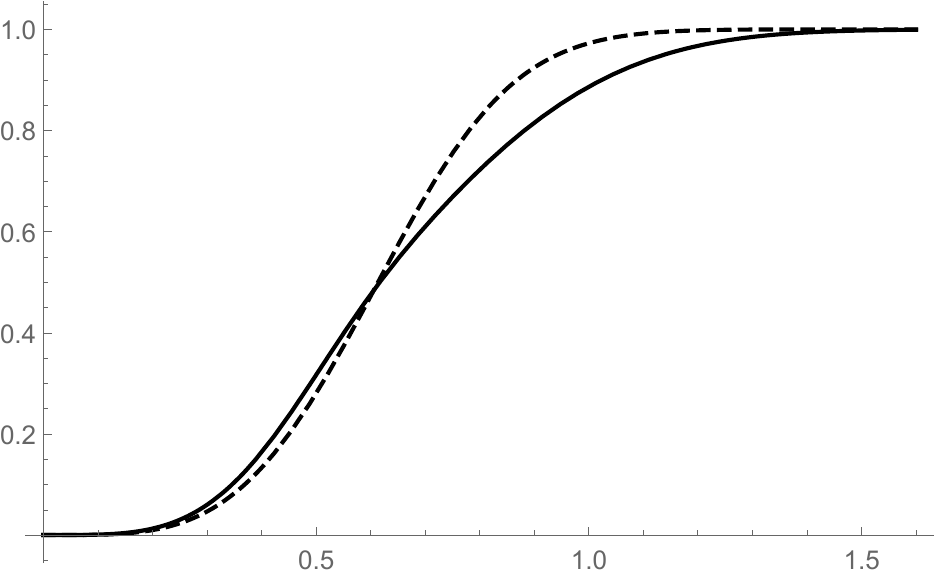}
	\includegraphics[width=6 cm]{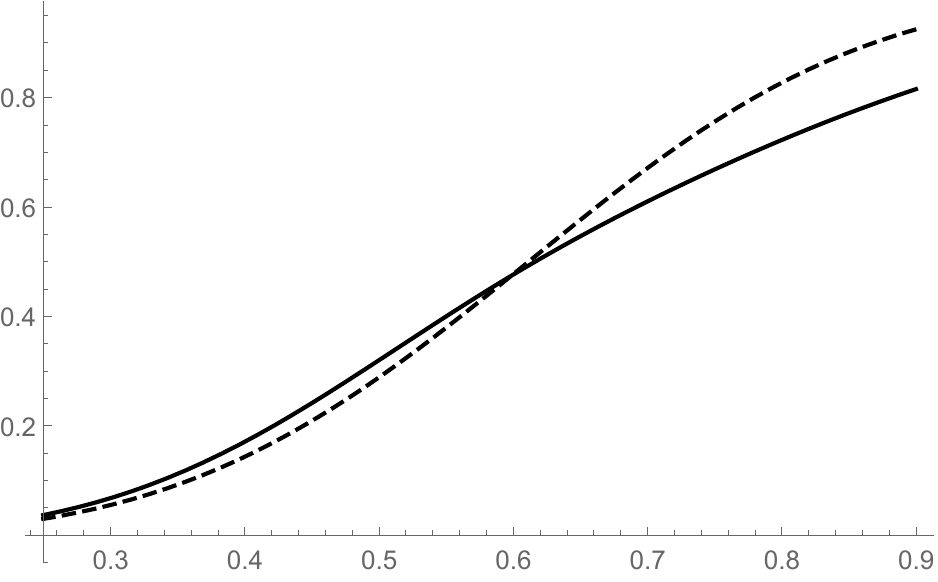}
	\caption{Plots of the distribution functions of $X_{2:2}$ (solid line)
		and $X^{*}_{1:2}$  (dashed line)
		considered in Example \ref{ex_power}
		for independent  (left) and dependent  (right) components.}
	\label{Figr_power}
\end{figure}

It is worth noting that Theorem \ref{th01} can be easily applied to parallel systems
due to \eqref{eq_max} and the fact that most copulas (at least those commonly used) are
Schur-concave. 
{Specifically, FGM, Plackett family, Gumbel-Barnett family and Cuadras-Aug\'{e} bivariate copula are
	Schur-concave (see \cite{DS-2003})
	and they are also positive or negative ordered (see \cite{N06}).
	Thus,  the following remark is a direct consequence of Theorem \ref{th01} and Lemma \ref{lema01}}.

{\begin{remark}\label{rem_new}
			Let $X_{1},\ldots ,X_{n}$ and $Y_{1},\ldots ,Y_{n}$ be two sets
		of nonnegative random variables with associated distributional copulas
		$C_X$ and $C_Y$, respectively.
		Suppose that $C_X$ is Schur-concave and $C_X\leq C_Y$.
		\begin{itemize}
			\item[i)] If $X_{i}\sim MPHR\left(\lambda
			_{i}\right) $ and $Y_{i}\sim MPHR\left(\lambda _{i}^{\ast
			}\right) $ for $i=1,\ldots ,n$,
			then 	${\bm\lambda} \overset{w}{\geq }{\bm\lambda }^{\ast }$ implies $X_{n:n}\geq _{ST}Y_{n:n}$, for $\alpha>0$.	
			\item[ii)] If $X_{i}\sim SPH\left(\lambda
			_{i},\alpha _{i}\right) $ and $Y_{i}\sim SPH\left(\lambda _{i}^{\ast
			},\alpha _{i}^{\ast }\right) $ for $i=1,\ldots ,n$,
			and $\tilde{r}$  decreasing in $t>0$,
			then $%
			{\bm\lambda} \overset{w}{\geq }{\bm\lambda }^{\ast }$ and ${\bm\alpha }%
			\overset{w}{\geq }{\bm\alpha }^{\ast }$
			imply $X_{n:n}\geq _{ST}Y_{n:n}$.	
			\item[iii)] If $X_{i}\sim$ EK-G$\left(\alpha, \beta
			_{i},\gamma\right) $ and $Y_{i}\sim$ EK-G$\left(\alpha, \beta
			_{i}^{*},\gamma\right) $ for $i=1,\ldots ,n$,
			then	${\bm\beta} \overset{w}{\geq }{\bm\beta }^{\ast }$ implies $X_{n:n}\geq _{ST}Y_{n:n}$,
			for $\alpha>0$ and $\gamma \leq 1$.	
		\end{itemize}
	\end{remark}
}

{The MPHR model, for $\alpha=1$, has been recently applied in Aeronautics
	(see \cite{Diamoutene}), whereas
	the EK-G distribution has been successfully applied to reliability analysis by \cite{Silva-2019}.}

However, for series systems, it can be the case that $D\left(
u_{1},\ldots ,u_{n}\right) =1-\widehat{C}\left( 1-u_{1},\ldots ,1-u_{n}\right) 
$ is not Schur-concave. For instance, for a series system with a FGM
survival copula, its multivariate distortion function is 
\begin{eqnarray}\label{eq_fgm}\nonumber
	D\left( u_{1},u_{2}\right) &=&1-\widehat{C}_{FGM}\left( 1-u_{1},1-u_{2}\right) \\
	&=&1-\left( 1-u_{1}\right) \left( 1-u_{2}\right) \left( 1+\theta
	u_{1}u_{2}\right)
\end{eqnarray}%
and it is easy to see that $D$ is Schur-convex. Therefore we can not use
Theorem \ref{th01} in order to compare two series systems. For this reason we present
the following result.

\begin{theorem}\label{th02}
	Let $F_{D_{1}}=D_{1}\left( F\left( t,\lambda _{1}\right) ,\ldots
	,F\left( t,\lambda _{n}\right) \right) $ and $F_{D_{2}}=D_{2}\left( G\left(
	t,\lambda _{1}^{\ast }\right) ,\ldots ,G\left(t, \lambda _{n}^{\ast }\right)
	\right) $. Suppose that $D_{1}$ is Schur-convex, $D_{1}\geq D_{2}$, $F\leq
	_{ST}G$ and $F\left(t, \lambda\right) $ is decreasing (increasing) and
	convex in $\lambda$. If ${\bm\lambda} \overset{w}{\geq }{\bm\lambda }%
	^{\ast }\left( {\bm\lambda }\geq _{w}{\bm\lambda }^{\ast }\right) $
	then $F_{D_{1}}\leq _{ST}F_{D_{2}}$.
\end{theorem}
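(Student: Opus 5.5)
The proof will follow the same chain of inequalities as Theorem \ref{th01}, with concavity and Schur-concavity replaced by convexity and Schur-convexity, so that every inequality is reversed. The goal is to show that, for every fixed $t$,
\[
D_{1}\left( F(t,\lambda _{1}),\ldots ,F(t,\lambda _{n})\right) \geq D_{1}\left( F(t,\lambda _{1}^{\ast }),\ldots ,F(t,\lambda _{n}^{\ast })\right) \geq D_{2}\left( F(t,\lambda _{1}^{\ast }),\ldots ,F(t,\lambda _{n}^{\ast })\right) \geq D_{2}\left( G(t,\lambda _{1}^{\ast }),\ldots ,G(t,\lambda _{n}^{\ast })\right) .
\]
This says $F_{D_{1}}\geq F_{D_{2}}$ pointwise, which is exactly $F_{D_{1}}\leq _{ST}F_{D_{2}}$.

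\textbf{First inequality.} This is the only step with real content.

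Fix $t$ and set $g(\lambda)=F(t,\lambda)$. By hypothesis $g$ is decreasing and convex (respectively, increasing and convex). I will apply the parts of Theorem 5.A.2 in \cite{Marshall:2011} that concern convex functions, namely the decreasing-convex and increasing-convex cases. These are the counterparts of parts (ii) and (iv) used in the proof of Theorem \ref{th01}.

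The hypothesis ${\bm\lambda} \overset{w}{\geq }{\bm\lambda }^{\ast }$ (respectively ${\bm\lambda }\geq _{w}{\bm\lambda }^{\ast }$) should then transfer to a weak majorization between $(g(\lambda_1),\ldots,g(\lambda_n))$ and $(g(\lambda^*_1),\ldots,g(\lambda^*_n))$. The transferred relation must be of the type for which Theorem 3.A.8 in \cite{Marshall:2011} applies to increasing Schur-convex functions. That theorem states that increasing Schur-convex functions preserve $\prec_w$.

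$D_{1}$ is increasing because it is a distortion function, and it is Schur-convex by assumption. Theorem 3.A.8 will therefore give $D_1(g(\bm\lambda))\geq D_1(g(\bm\lambda^*))$.

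\textbf{Main obstacle.} The delicate point is bookkeeping of the majorization directions. The pairing must be correct: a decreasing $g$ turns a supermajorization-type relation into a submajorization one. The resulting weak order must be exactly the one preserved by increasing Schur-convex maps, and it must point in the direction that yields $\geq$ rather than $\leq$.

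I will check this against the pattern already used in Theorem \ref{th01}. There, the concave and Schur-concave hypotheses produced $D_1(F(\bm\lambda))\leq D_1(F(\bm\lambda^*))$. Replacing every concave or Schur-concave property by its convex or Schur-convex dual exchanges $\prec^w$ with $\prec_w$ throughout, so the final inequality flips to $\geq$.

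\textbf{Second inequality.} This is immediate from $D_{1}\geq D_{2}$ evaluated at the point $(F(t,\lambda_1^*),\ldots,F(t,\lambda_n^*))$.

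\textbf{Third inequality.} $F\leq_{ST}G$ means $F(t,\lambda)\geq G(t,\lambda)$ for all $t$. In particular $F(t,\lambda_i^*)\geq G(t,\lambda_i^*)$ for each $i$. Since $D_2$ is increasing in each argument (Proposition 2.3(i) in \cite{NASS16}), the third inequality follows.

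Chaining the three inequalities for each $t$ completes the proof.
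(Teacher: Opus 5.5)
Your proposal is correct and is essentially the paper's proof: it uses the same chain, namely the convex cases of Theorem 5.A.2 (Marshall et al.), then Theorem 3.A.8 for the increasing Schur-convex $D_1$, then $D_1\geq D_2$, then monotonicity of $D_2$ together with $F\geq G$ pointwise. The bookkeeping you flagged does work out. A decreasing convex $F(t,\cdot)$ turns ${\bm\lambda}\overset{w}{\geq}{\bm\lambda}^{*}$ into $(F(t,\lambda_1),\ldots,F(t,\lambda_n))\geq_w(F(t,\lambda_1^{*}),\ldots,F(t,\lambda_n^{*}))$, and an increasing convex one carries ${\bm\lambda}\geq_w{\bm\lambda}^{*}$ to the same relation, which is exactly the order that increasing Schur-convex functions respect.
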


\begin{proof} 
	Since $F\left( t,\lambda\right) $ is decreasing (increasing) and
	convex in $\lambda$, from Theorem 5.A.2 (i or iii) in \cite{Marshall:2011}, we
	have $\left( F\left( \lambda _{1}\right) ,\ldots ,F\left( \lambda
	_{n}\right) \right) \geq _{w}\left( F\left( \lambda _{1}^{\ast }\right)
	,\ldots ,F\left( \lambda _{n}^{\ast }\right) \right) $ if 
	${\bm\lambda} 
	\overset{w}{\geq }{\bm\lambda }^{\ast }\left( {\bm\lambda }\geq _{w}{%
		\bm\lambda }^{\ast }\right) $. Then, from Theorem 3.A.8. in \cite{Marshall:2011}, we
	get that 
	\[D_{1}\left( F\left( t,\lambda _{1}\right) ,\ldots ,F\left(t, \lambda
	_{n}\right) \right) \geq D_{1}\left( F\left( t,\lambda _{1}^{\ast }\right)
	,\ldots ,F\left( t,\lambda _{n}^{\ast }\right) \right) \]
	since $D_{1}$ is
	increasing (by definition) and Schur-convex (by assumption). Now, it is
	clear that $D_{1}\left( F\left(t, \lambda _{1}^{\ast }\right) ,\ldots ,F\left(
	t,\lambda _{n}^{\ast }\right) \right) \geq D_{2}\left(t, F\left( \lambda
	_{1}^{\ast }\right) ,\ldots ,F\left(t, \lambda _{n}^{\ast }\right) \right) $
	by assumption $D_{1}\geq D_{2}$. Finally, from Proposition 2.3(i) in
	\cite{NASS16}, we know that 
	\[D_{2}\left( F\left( t,\lambda _{1}^{\ast
	}\right) ,\ldots ,F\left( t,\lambda _{n}^{\ast }\right) \right) \geq
	D_{2}\left( G\left( t,\lambda _{1}^{\ast }\right) ,\ldots ,G\left(t, \lambda
	_{n}^{\ast }\right) \right) \]
	if $F\left( t,\lambda _{i}^{\ast }\right) \geq
	G\left(t, \lambda _{i}^{\ast }\right) $ for $t>0$ and $i=1,\ldots ,n$, which holds by
	the assumption $F\leq _{ST}G$. This complete the proof.
\end{proof} 

\bigskip

Next, we present an example for comparing two series systems consisting of
dependent components which have MPRHR lifetime distributions.

\begin{example}\label{ex_MPRHR}
	Let $X_{1},X_2$ and {$Y_{1},Y_2$} be two sets
	of dependent random variables with $X_{i}\sim MPRHR\allowbreak(\lambda_i,{F})$ and {$Y_{i}\sim MPRHR(\lambda_i,G)$} for $%
	i=1,2$. We assume that the distribution functions of $X_i$ {and $Y_i$ are}
	as defined in \eqref{MPRHR} where $H$ is the distribution function of an exponential random variable with hazard rate $2$ {and $1$} for $i=1,2$, respectively.
	{Note that $F\leq_{ST}G$}.
	From Lemma \ref{lema01}(ii), we know that $F$ is convex with respect to $\lambda$ when $0<\alpha\leq 1$.
	Set $\alpha =0.9$, ${\bm \lambda}=(0.7, 1.5)$ and ${\bm \lambda}^{*}=(1.1, 1.3)$. It is easy to check that ${\bm\lambda} \overset{w}{\geq }{\bm\lambda }^{\ast }$.
	Now, we consider two series systems $X_{1:2}$ and
	{$Y_{1:2}$} with dependent components assembled by the same FGM survival copula with $\theta=-0.5$ {or $\theta=0.5$}, that is, 
	$D_1=D_2=D$ which is defined as  in \eqref{eq_fgm}. 
Then, all the conditions in	Theorem \ref{th02} are satisfied.
	Figure \ref{Figr_MPRHR} plots the distribution functions of $X_{1:2}$ (solid line) and $Y_{1:2}$ (dashed line)
	{for $\theta=-0.5$ (left) and $\theta=0.5$ (right)}. 
	It is evident that $X_{1:2}\leq_{ST}Y_{1:2}$, 
	validating the result in
	Theorem \ref{th02}.
\end{example}

\begin{figure}[H]
	\centering
	\includegraphics[width=6 cm]{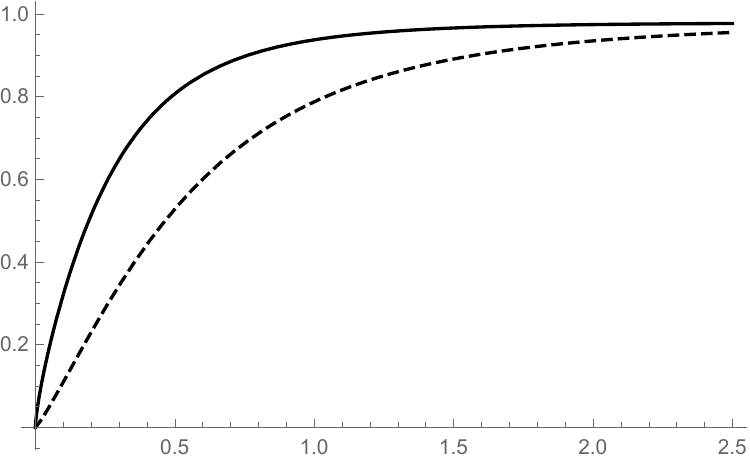}
	\includegraphics[width=6 cm]{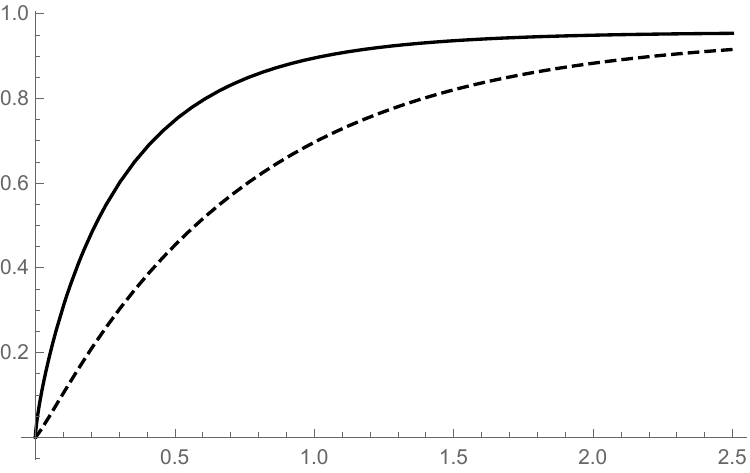}
	\caption{Plots of the distribution functions of $X_{1:2}$ (solid line)
		and $Y_{1:2}$  (dashed line)
		considered in Example \ref{ex_MPRHR}
		for dependent components assembled by FGM survival copulas
		when $\theta=-0.5$  (left) and $\theta=0.5$   (right).}
	\label{Figr_MPRHR}
\end{figure}

Theorem \ref{th02} can be also used to compare systems with different structure. Let us
see an example on how Theorem \ref{th02} can be applied in this case.

\begin{example}\label{ex_04} 
	Let us consider two
	systems
	\[T_{1}=\min \left( X_{1},\max \left( X_{2},X_{3}\right) \right) \]
	and 
	\[T_{2}=\max \left( \min \left( X_{1},X_{2}\right) ,\min \left(
	X_{1},X_{3}\right) ,\min \left( X_{2},X_{3}\right) \right) \] 
	where the
	components of both systems are independent. 
	Observe that $T_2$ is a $2$-out-of-$3$ system.
	From Table 1 in
	\cite{NA07}, we know that $D_{1}\geq D_{2}$ where%
	\begin{equation*}
	D_{1}\left( u_{1},u_{2},u_{3}\right) =1-\bar{D}_{1}\left(
	1-u_{1},1-u_{2},1-u_{3}\right) =u_{1}+u_{2}u_{3}-u_{1}u_{2}u_{3}
	\end{equation*}%
	and%
	\begin{equation*}
	D_{2}\left( u_{1},u_{2},u_{3}\right) =1-\bar{D}_{2}\left(
	1-u_{1},1-u_{2},1-u_{3}\right)
	=u_{1}u_{2}+u_{1}u_{3}+u_{2}u_{3}-2u_{1}u_{2}u_{3},
	\end{equation*}%
	for $(u_{1},u_{2},u_{3})\in [0,1]^3$.
	In order to prove that $D_{1}$ is Schur-convex, we denote the partial
	derivative of $D_{1}$ wih respect to its $k$th argument by $\partial
	_{(k)}D_{1}$. 
	It is easy to check that%
	\begin{equation*}
	\partial _{(1)}D_{1}=1-u_{2}u_{3}\geq \partial _{(2)}D_{1}=u_{3}\left(
	1-u_{1}\right) \geq \partial _{(3)}D_{1}=u_{2}\left( 1-u_{1}\right),
	\end{equation*}%
	if and only if, $u_{2}\leq u_{1}$ and $u_{2}\leq u_{3}$. 
	In other words, if $%
	F\left( t,\lambda\right) $ is decreasing (increasing) in $\lambda$
	and $\lambda _{2}\geq \max \left( \lambda _{1},\lambda _{3}\right)$ $\left(
	\lambda _{2}\leq \min \left( \lambda _{1},\lambda _{3}\right) \right) $,
	then $D_{1}$ is Schur-convex. 
	Therefore, from Theorem \ref{th02}, $T_{1}\leq _{ST}T_{2}$
	whenever ${\bm\lambda} \overset{w}{\geq }{\bm\lambda }^{\ast }\left( {%
		\bm\lambda }\geq _{w}{\bm\lambda }^{\ast }\right) $ for any distribution
	function $F\left( t,\lambda\right) $ decreasing (increasing) and convex
	in $\lambda$.
	Specifically, we assume that $F$ belongs to the TIIHL-RS model defined in
	\eqref{eq_tiil} where $H$ is the distribution function of an exponential random variable with hazard rate $\lambda>0$.
	From Lemma \ref{lema01}(i), we know that $F$ is decreasing and convex in $\alpha$ for any $\lambda>0$.
	We take $\lambda =0.8$, ${\bm \alpha}=(0.5, 1.6, 2.1)$ and ${\bm \alpha}^{*}=(0.9, 1.3, 2.3)$. Observe that ${\bm\alpha} \overset{w}{\geq }{\bm\alpha }^{\ast }$.
	We plot the distribution functions of $T_1$ (solid line) and $T_2$ (dashed line) in Figure \ref{Fig_TIIHL}. 
	It is evident that $T_1\leq_{ST}T_2$, which is according to
	Theorem \ref{th02}.
\end{example} 

\begin{figure}[H]
	\centering
	\includegraphics[width=6 cm]{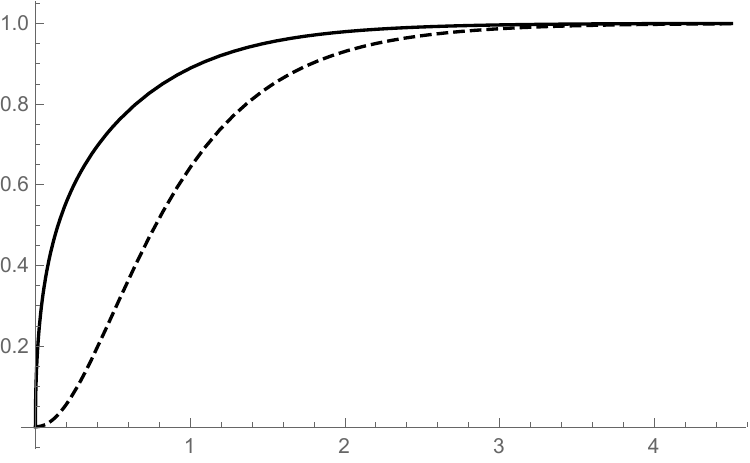}
	\caption{Plots of the distribution functions of  $T_1$ (solid line) and 
		$T_2$ (dashed line) considered in Example \ref{ex_04}.}
	\label{Fig_TIIHL}
\end{figure}

The following example shows that the condition  $F$ 
convex given in the
above Theorem \ref{th02} can not be dropped.
In addition, it shows that log-concavity of the function $F$ with respect to $\lambda$ is not a sufficient condition to obtain the ordering between series systems with dependent components.

\begin{example}\label{ex_02}
	We consider two series systems $X_{1:2}$ and $X^{*}_{1:2}$
	with dependent components assembled by a FGM survival copula. 
	For simplicity, we assume $D_{1}=D_{2}=D$ and $F\overset{ST}{=}G$.
	Then, the associated multivariate distortion function is the same than in 
	\eqref{eq_fgm} and we know that it is Schur-convex.
	Let us assume that the components have inverse Weibull distribution with
	$F(t,\lambda,\alpha)=e^{-(\lambda t)^{-\alpha}}$ for $t>0$ when $\alpha,\lambda>0$. Hence, the distribution functions of the systems are
	\[F_{X_{1:2}}(t)=1-(1-F(t,\lambda_1))(1-F(t,\lambda_2))
	(1+\theta F(t,\lambda_1)F(t,\lambda_2))\]
	and 
	\[F_{X_{1:2}^{*}}(t)=1-(1-F(t,\lambda_1^{*}))(1-F(t,\lambda_2^{*}))
	(1+\theta F(t,\lambda_1^{*})F(t,\lambda_2^{*})).\]
	On the other hand, it is easy to see that $F(t,\lambda,\alpha)$ is increasing in $\lambda$. Taking the second partial derivative of $F(t,\lambda,\alpha)$ with respect to $\lambda$, we get
	\[\frac{\partial^{2} F(t,\lambda,\alpha)}{\partial\lambda^{2}}
	\overset{sign}{=} \alpha-(1+\alpha) (\lambda t)^{\alpha}.\]
	Note that this function changes the sign, therefore $F(t,\lambda,\alpha)$ is not convex nor concave in $\lambda$. 
	Set $\theta=0.5$, $\alpha=1.3$, ${\bm\lambda} =(2,4)$ and ${\bm\lambda}^{*} =(3,3)$. It is easy to check that 
	${\bm\lambda}\overset{m}{\geq}{\bm\lambda}^{*}$.
	Then, we obtain 
	$F_{X_{1:2}}(0.1)=0.0375\geq
	0.0166=F_{X_{1:2}^{*}}(0.1)$ and 
	$F_{X_{1:2}}(0.8)=0.8978\leq
	0.905=F_{X_{1:2}^{*}}(0.8)$, i.e, the distribution functions of the two systems
	intersect.
	In addition, observe that
	$\log F(t,\lambda,\alpha)=-(\lambda t)^{-\alpha}$ is concave in $\lambda$, i.e, 
	$F(t,\lambda,\alpha)$ is log-concave in $\lambda$. Therefore, log-concavity of the function $F(t,\lambda,\alpha)$ in $\lambda$ is not a sufficient condition to obtain the ordering between series systems with dependent components.
\end{example}

{Of course, based on Theorem \ref{th02} and Lemma \ref{lema01}, it is possible to obtain, for series systems, similar comparisons to those in Remark \ref{rem_new}. It is omitted for the sake of brevity.}

\section{Archimedean copulas for extreme OS}\label{sec_arq}

In this Section, we investigate comparisons between parallel
and series systems with dependent components assembled with Archimedean copulas and with lifetime distribution functions dependent on a parameter $\lambda$.
%
{By definition, for a decreasing and continuous function $\psi :[0,\infty ]\rightarrow
\lbrack 0,1]$\ such that $\psi \left( 0\right) =1$\ and $\psi \left( +\infty
\right) =0$\ and $\phi =\psi ^{-1}$\ being the pseudo-inverse,%
\begin{equation}\label{eq_arq}
C_{\psi}\left( u_{1},\ldots ,u_{n}\right) =\psi \big( \phi \left( u_{1}\right)
+\cdots +\phi \left( u_{n}\right) \big), 
\end{equation}%
for $u_{i}\in \lbrack 0,1]$ with $i=1,\ldots ,n$, is said to be an
Archimedean copula with generator $\psi $ if $\left( -1\right) ^{k}\psi
^{\lbrack k]}\left( x\right) \geq 0$\ for $k=0,\ldots ,n-2$\ and $\left(
-1\right) ^{n-2}\psi ^{\lbrack n-2]}$ is decreasing and convex. The
Archimedean copula family includes many well-known copulas such as
independence (product) copula, Clayton copula, and Ali-Mikhail-Haq
copula, among others. 
For more detailed study on the
properties of copulas, one may refer to \cite{N06}.}

\subsection{Parallel systems with Archimedean copulas}

A direct consequence of Theorem \ref{th01} is the following
result.

\begin{corollary}\label{cor01}
	Let $X_{1},\ldots ,X_{n}$ and $Y_{1},\ldots ,Y_{n}$ be two sets
	of nonnegative random variables with $X_{i}\sim F\left( t,\lambda
	_{i}\right) $ and $Y_{i}\sim G\left( t,\lambda _{i}^{\ast }\right) $ for $%
	i=1,\ldots ,n$, with associated Archimedean copulas $C_{\psi _{1}}$ and $%
	C_{\psi _{2}}$, respectively. Suppose that 
	$C_{\psi _{1}}\leq C_{\psi _{2}}$,
    $F\geq _{ST}G$ and $F\left( t,\lambda\right) $ is
	increasing (decreasing) and concave in $\lambda $. If ${\bm\lambda} 
	\overset{w}{\geq }{\bm\lambda }^{\ast }\left( {\bm\lambda \geq }_{w}{%
		\bm\lambda }^{\ast }\right) $ then $X_{n:n}\geq _{ST}Y_{n:n}$.
\end{corollary}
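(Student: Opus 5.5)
The plan is to apply Theorem \ref{th01} directly with $D_1=C_{\psi_1}$ and $D_2=C_{\psi_2}$. By \eqref{eq_max}, the distribution function of a parallel system is the distributional copula evaluated at the marginals. Hence
\[F_{X_{n:n}}(t)=C_{\psi_1}\big(F(t,\lambda_1),\ldots,F(t,\lambda_n)\big),\qquad F_{Y_{n:n}}(t)=C_{\psi_2}\big(G(t,\lambda_1^{\ast}),\ldots,G(t,\lambda_n^{\ast})\big).\]
Both systems therefore fit the framework of Theorem \ref{th01}, with multivariate distortion functions $D_1=C_{\psi_1}$ and $D_2=C_{\psi_2}$. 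Each is a copula, so it is increasing, with $D(0,\ldots,0)=0$ and $D(1,\ldots,1)=1$.

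The hypotheses $D_1\leq D_2$ (that is, $C_{\psi_1}\leq C_{\psi_2}$), $F\geq_{ST}G$, and the monotonicity and concavity of $F(t,\lambda)$ in $\lambda$ are assumed in the corollary. They match Theorem \ref{th01} verbatim. So the only hypothesis of Theorem \ref{th01} left to check is that $D_1=C_{\psi_1}$ is Schur-concave.

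For Schur-concavity I would argue as follows.
\begin{itemize}
\item $C_{\psi_1}(u_1,\ldots,u_n)=\psi_1\big(\sum_i\phi_1(u_i)\big)$ is symmetric in its arguments.
\item $\psi_1$ is decreasing, so $C_{\psi_1}$ is Schur-concave in $\mathbf u$ as soon as $g(\mathbf u)=\sum_i\phi_1(u_i)$ is Schur-convex.
\item By the standard characterization of symmetric separable sums (Marshall et al.), $g$ is Schur-convex when $\phi_1$ is convex.
\item $\phi_1=\psi_1^{-1}$ is convex because $\psi_1$ is decreasing and convex. Convexity of $\psi_1$ follows from the defining conditions of an Archimedean generator ($(-1)^{n-2}\psi^{[n-2]}$ decreasing and convex, together with $(-1)^k\psi^{[k]}\geq 0$).
\item The inverse of a decreasing convex function is decreasing and convex.
\end{itemize}
Alternatively, one can check the Schur--Ostrowski condition directly. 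Writing $s=\sum_j\phi_1(u_j)$,
\[(u_i-u_j)\big(\partial_i C_{\psi_1}-\partial_j C_{\psi_1}\big)=\psi_1'(s)\,(u_i-u_j)\big(\phi_1'(u_i)-\phi_1'(u_j)\big)\leq 0,\]
since $\psi_1'\leq 0$ and $\phi_1'$ is increasing.

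The only mildly delicate step is this Schur-concavity argument, specifically justifying convexity of $\psi_1$ from the Archimedean definition and handling the pseudo-inverse where $\psi_1$ reaches $0$. On the range where $\psi_1>0$, $\phi_1$ is a genuine inverse. Outside it the copula equals $0$ identically, and Schur-concavity is unaffected, which can be checked via the $T$-transform formulation instead of derivatives.

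With Schur-concavity in hand, Theorem \ref{th01} gives $F_{X_{n:n}}\geq F_{Y_{n:n}}$ pointwise whenever ${\bm\lambda}\overset{w}{\geq}{\bm\lambda}^{\ast}$ (respectively ${\bm\lambda}\geq_w{\bm\lambda}^{\ast}$ in the decreasing case). This is the stated conclusion $X_{n:n}\geq_{ST}Y_{n:n}$ in the paper's convention.
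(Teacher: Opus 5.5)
Your proposal follows the paper's proof: identify $D_i=C_{\psi_i}$ via \eqref{eq_max} and apply Theorem~\ref{th01}. The only difference is that the paper gets Schur-concavity of $C_{\psi_1}$ by citing Theorem~4 of \cite{DS-2003}, while you prove it directly. Your chain is: convexity of $\psi_1$ from the generator conditions, hence convexity of $\phi_1$, hence Schur-convexity of $\sum_i\phi_1(u_i)$, then composition with the decreasing $\psi_1$. That is correct and is essentially the standard proof of the cited fact.

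There is one slip in your last paragraph. The chain in the proof of Theorem~\ref{th01} yields $F_{X_{n:n}}(t)\le F_{Y_{n:n}}(t)$ for all $t$, not $\ge$. Under the paper's convention ($X\le_{ST}Y$ iff $F\ge G$), it is this inequality that means $X_{n:n}\ge_{ST}Y_{n:n}$. Your final conclusion is right, but the pointwise inequality you wrote, and its claimed equivalence to the conclusion, are both reversed.
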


\begin{proof}
	From \eqref{eq_max}, we know that the distribution functions of $X_{n:n}$ and $%
	Y_{n:n}$ are 
	\[F_{X_{n:n}}(t)=D_{1}\left( F\left(t, \lambda _{1}\right) ,\ldots
	,F\left(t, \lambda _{n}\right) \right) =C_{\psi _{1}}\left( F\left(t, \lambda
	_{1}\right) ,\ldots ,F\left( t,\lambda _{n}\right) \right) \] 
	and 
	\[
	F_{Y_{n:n}}(t)=D_{2}\left( G\left(t,\lambda _{1}^{\ast }\right) ,\ldots ,G\left(t,
	\lambda _{n}^{\ast }\right) \right) =C_{\psi _{2}}\left( G\left( t,\lambda _{1}^{\ast }\right)
	,\ldots ,G\left(t, \lambda _{n}^{\ast }\right) \right),\] 
	respectively. 
	Observe that $D_{i}=C_{\psi _{i}}$, for $i=1,2$.
	From Theorem 4 in \cite{DS-2003}, we know that
	every Archimedean copula is Schur-concave,
	then $D_{1}$ is Schur-concave. Now, note that $C_{\psi _{1}}\leq C_{\psi _{2}}$
	is equivalent to $D_{1}\leq D_{2}$,
	then from Theorem \ref{th01}, the result is obtained.
\end{proof}

\begin{remark}
	Observe that  $C_{\psi _{1}}\leq C_{\psi _{2}}$ it is equivalent to
	$\phi_1\circ \psi _{2}$ subadditive (see Theorem 4.4.2 in \cite{N06}).
	In addition, from Lemma A.1 of \cite{LF2015}, we know that
	$C_{\psi _{1}}\leq C_{\psi _{2}}$
	 if $\phi _{2}\circ \psi _{1}$ is
	superadditive.
\end{remark}

\bigskip

Very recently, \cite{Barmalzan-2020} investigated stochastic comparisons of
series and parallel systems with dependent heterogeneous extended
exponential components under Archimedean copula with $F\overset{ST}{=}G$.
In this case, the distribution function of the $i$th extended exponential
component is%
\begin{equation}\label{eq_ee}
F(t,\lambda _{i})=\frac{1-e^{-\lambda _{i}t}}{1-\bar{\alpha }%
	e^{-\lambda _{i}t}},\quad\text{ for }t>0,
\end{equation}%
where $\alpha >0$, $\bar{\alpha }=1-\alpha $ and $\lambda _{i}>0$. 
Observe that \eqref{eq_ee} belongs to the MPHR model defined in \eqref{MPHR} 
where $H(t)=e^{-t}$. Then, from Lemma \ref{lema01}(iii), we know that 
$F(t,\lambda _{i})$ is increasing and concave in $\lambda_{i}$. 
Therefore, from Corollary \ref{cor01},
if $\phi _{2}\circ \psi _{1}$ is superadditive, we get that 
$\bm{\lambda} \overset{w}{\geq }\bm{\lambda }^{\ast }$
implies $X_{n:n}\geq _{ST}Y_{n:n}$.
In other words,
Theorem 1 in \cite{Barmalzan-2020}  can be seen as a particular case of Corollary \ref{cor01} here.
Observe also that, from our result, we know that $X_{n:n}\geq _{ST}Y_{n:n}$
holds for every $\alpha >0$, not only for $0<\alpha \leq 1$
as it is claimed in \cite{Barmalzan-2020}.

Next, we present another result to compare parallel systems with dependent components connected by Archimedean copulas.

\begin{proposition}\label{prop_01}
		Let $X_{1},\ldots ,X_{n}$ and $Y_{1},\ldots ,Y_{n}$ be two
	sets of nonnegative random variables with 
	$X_{i}\sim F\left( t,\lambda
	_{i}\right) $ and 
	$Y_{i}\sim G\left( t,\lambda _{i}^{\ast }\right) $ for $%
	i=1,\ldots ,n$, with associated Archimedean copulas $C_{\psi _{1}}$ and $%
	C_{\psi _{2}}$, respectively. 
	Suppose that $C_{\psi _{1}}\leq C_{\psi _{2}}$
 and $F\geq _{ST}G$.
 If $\phi _{1}(F(t,\lambda))$ is convex in $\lambda$
		and $F\left( t,\lambda\right) $ is
		increasing (decreasing) in $\lambda$,
		then ${\bm\lambda} 
		\overset{w}{\geq }{\bm\lambda }^{\ast }\left( {\bm\lambda \geq }_{w}{%
			\bm\lambda }^{\ast }\right) $ implies $X_{n:n}\geq _{ST}Y_{n:n}$.
\end{proposition}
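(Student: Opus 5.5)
The plan is to split the comparison into three inequalities, exactly as in the proof of Theorem \ref{th01}. The difference is that the Schur-concavity step is applied to the function $\psi_1(x_1+\cdots+x_n)$ of the transformed variables $x_i=\phi_1(F(t,\lambda_i))$, rather than to $D_1$ acting on $(F(t,\lambda_1),\ldots,F(t,\lambda_n))$. By \eqref{eq_max} and \eqref{eq_arq},
\[
F_{X_{n:n}}(t)=\psi_1\Big(\sum_{i=1}^n \phi_1(F(t,\lambda_i))\Big),\qquad F_{Y_{n:n}}(t)=\psi_2\Big(\sum_{i=1}^n \phi_2(G(t,\lambda_i^{*}))\Big).
\]
So it suffices to show $F_{X_{n:n}}(t)\le F_{Y_{n:n}}(t)$ for every $t$.

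\textbf{Step 1.} I would show
\[
\sum_{i=1}^n \phi_1(F(t,\lambda_i))\ \ge\ \sum_{i=1}^n \phi_1(F(t,\lambda_i^{*})).
\]
The function $h(\lambda)=\phi_1(F(t,\lambda))$ is convex by hypothesis. Since $\phi_1$ is decreasing, $h$ is decreasing when $F$ is increasing in $\lambda$, and increasing when $F$ is decreasing. The map $\mathbf{x}\mapsto\sum_i h(x_i)$ is symmetric and convex, hence Schur-convex. Then Theorem 3.A.8 in \cite{Marshall:2011} gives the inequality:
\begin{itemize}
\item for $h$ decreasing, under ${\bm\lambda}\overset{w}{\geq}{\bm\lambda}^{*}$;
\item for $h$ increasing, under ${\bm\lambda}\geq_w{\bm\lambda}^{*}$.
\end{itemize}
Equivalently, Theorem 5.A.1 in \cite{Marshall:2011} gives the same conclusion. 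Matching the monotonicity directions with the two majorization cases is the only delicate point, and I expect it to be the main bookkeeping obstacle. Since $\psi_1$ is decreasing, Step 1 yields
\[
F_{X_{n:n}}(t)\le \psi_1\Big(\sum_i\phi_1(F(t,\lambda_i^{*}))\Big)=C_{\psi_1}\big(F(t,\lambda_1^{*}),\ldots,F(t,\lambda_n^{*})\big).
\]

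\textbf{Step 2.} The hypothesis $C_{\psi_1}\le C_{\psi_2}$ gives
\[
C_{\psi_1}\big(F(t,\lambda_1^{*}),\ldots\big)\le C_{\psi_2}\big(F(t,\lambda_1^{*}),\ldots\big).
\]

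\textbf{Step 3.} Because $F\ge_{ST}G$, we have $F(t,\lambda_i^{*})\le G(t,\lambda_i^{*})$ for each $i$. A copula is increasing in each argument (Proposition 2.3(i) in \cite{NASS16}), so $C_{\psi_2}(F(t,\lambda_i^{*}))\le C_{\psi_2}(G(t,\lambda_i^{*}))=F_{Y_{n:n}}(t)$.

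Chaining Steps 1–3 gives $F_{X_{n:n}}\le F_{Y_{n:n}}$, that is, $X_{n:n}\ge_{ST}Y_{n:n}$.
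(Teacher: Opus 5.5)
Your proposal is correct and follows essentially the same route as the paper. You use the monotone Schur-convexity of $\sum_i \phi_1(F(t,\lambda_i))$, together with $\psi_1$ decreasing, to pass from ${\bm\lambda}$ to ${\bm\lambda}^{*}$ under the same copula $C_{\psi_1}$; then $C_{\psi_1}\le C_{\psi_2}$; then monotonicity of $C_{\psi_2}$ with $F\ge_{ST}G$. Your pairing of decreasing $h$ with ${\bm\lambda}\overset{w}{\geq}{\bm\lambda}^{*}$ and increasing $h$ with ${\bm\lambda}\geq_w{\bm\lambda}^{*}$ via Theorem 3.A.8 is right, and only makes explicit what the paper obtains from Proposition 3.C.1.b.
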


\begin{proof} 
	The distribution function of $X_{n:n}$ is%
	\begin{equation}\label{dist_max}
	F_{X_{n:n}}(t)=\psi _{1}\left( \sum_{i=1}^{n}\phi _{1}\left( F\left(
	t,\lambda _{i}\right) \right) \right) .
	\end{equation}%
	Let us denote $g(\lambda_i)=\phi _{1}(F(t,\lambda_i))$, then the above 
	distribution function can be rewritten as
	\[F_{X_{n:n}}(t)=\psi _{1}\left(\sum_{i=1}^{n} g(\lambda_i) \right).\]	
	Let $X_{1}^{\ast },\ldots ,X_{n}^{\ast }$ be a set of nonnegative random
	variables with $X_{i}^{\ast }\sim F\left( t,\lambda _{i}^{\ast }\right) $
	for $i=1,\ldots ,n$, and the same associated Archimedean copula than $%
	X_{1},\ldots ,X_{n}$. 
	Then, the distribution function of $X_{n:n}^{\ast }$ is%
	\begin{equation*}
	F_{X_{n:n}^{\ast }}(t)=\psi _{1}\left( \sum_{i=1}^{n}\phi _{1}\left( F\left(
	t,\lambda _{i}^{\ast }\right) \right) \right) 
	=\psi _{1}\left(\sum_{i=1}^{n} g(\lambda_i^{\ast }) \right).
	\end{equation*}%
	First, we need to prove that 
	$X_{n:n}\geq _{ST}X_{n:n}^{\ast }$ if 
	${\bm\lambda} \overset{w}{\geq }{\bm\lambda }^{\ast }\left( {\bm\lambda \geq }_{w}{\bm\lambda }^{\ast }\right) $. 
	It is evident that
	\[\frac{\partial g(\lambda_i)}{\partial \lambda_i}=
	\phi' _{1}(F(t,\lambda_i))\frac{\partial F(t,\lambda_i)}{\partial \lambda_i}
	\overset{sign}{=}-\frac{\partial F(t,\lambda_i)}{\partial \lambda_i}.
\]
Then, from Proposition 3.C.1.b in \cite{Marshall:2011}, we have that
$\sum_{i=1}^{n} g(\lambda_i)$ 
is decreasing (increasing) and Shur-convex when 
$F(t,\lambda_i)$ is increasing (decreasing) in $\lambda_i$.
Now, as $\psi _{1}$ is decreasing, we get
$\psi _{1}\left(\sum_{i=1}^{n} g(\lambda_i) \right)\leq \psi _{1}\left(\sum_{i=1}^{n} g(\lambda_i^{\ast }) \right)$. 
In other words, $X_{n:n}\geq
_{ST}X_{n:n}^{\ast }$ if ${\bm\lambda} \overset{w}{\geq }{\bm\lambda }^{\ast
}\left( {\bm\lambda \geq }_{w}{\bm\lambda }^{\ast }\right) $.

	On the other hand, let $Y_{1}^{\ast },\ldots ,Y_{n}^{\ast }$ be a set of
	nonnegative random variables with $Y_{i}^{\ast }\sim F\left( t,\lambda
	_{i}^{\ast }\right) $ for $i=1,\ldots ,n$, and the same associated
	Archimedean copula than $Y_{1},\ldots ,Y_{n}$. Then, the distribution
	function of $Y_{n:n}^{\ast }$ is
	
	\begin{equation*}
	F_{Y_{n:n}^{\ast }}(t)=\psi _{2}\left( \sum_{i=1}^{n}\phi _{2}\left( F\left(
	t,\lambda _{i}^{\ast }\right) \right) \right) =C_{\psi _{2}}\left( F\left(
	t,\lambda _{1}^{\ast }\right) ,\ldots ,F\left( t,\lambda _{n}^{\ast }\right)
	\right) .
	\end{equation*}%
	Now, as $C_{\psi _{1}}\leq C_{\psi _{2}}$, then $F_{X_{n:n}^{\ast
	}}\leq F_{Y_{n:n}^{\ast }}$. In other words, $X_{n:n}^{\ast }\geq
	_{ST}Y_{n:n}^{\ast }$. Finally, we know, from Proposition 2.3(i) in
	\cite{NASS16}, that 
	\[C_{\psi _{2}}\left( F\left( t,\lambda _{1}^{\ast
	}\right) ,\ldots ,F\left( t,\lambda _{n}^{\ast }\right) \right) \leq C_{\psi
		_{2}}\left( G\left( t,\lambda _{1}^{\ast }\right) ,\ldots ,G\left( t,\lambda
	_{n}^{\ast }\right) \right)\]
	 since $F\geq _{ST}G$. This complete the proof.
\end{proof}

\begin{remark}
	Taking the second partial derivative of $g=\phi _{1}\circ F$
	with respect to $\lambda$, we have
	\[\frac{\partial^{2}g(\lambda)}{\partial\lambda^{2}}=
	\phi'' _{1}(F(t,\lambda))\left(\frac{\partial F(t,\lambda) }{\partial\lambda}\right)^{2}+
	\phi' _{1}(F(t,\lambda))
	\frac{\partial^{2} F(t,\lambda) }{\partial\lambda^{2}}.
	\]
	Therefore, if $F$ is concave in $\lambda$, then $g$ is convex since
	$\phi _{1}$ is convex.
	In other words, the condition $\phi _{1}(F(t,\lambda))$  convex in $\lambda$ in Proposition \ref{prop_01} is a weaker condition than the condition
	$F$ concave with respect to $\lambda$ in Corollary \ref{cor01}.
\end{remark}

The following example provides an illustration of the theoretical result of
Proposition \ref{prop_01}.

\begin{example}\label{ex_max}
		We consider two parallel systems $X_{2:2}$ and $Y_{2:2}$
	with dependent components assembled by an Ali-Mikhail-Haq (AMH) copula 
	with generator $\psi(x)=(1-\theta)/(e^{x}-\theta)$  for $\theta\in(0,1)$.
	For simplicity, we assume $D_{1}=D_{2}=D$.
	Let us assume that the components have SPRH distribution as defined in
	\eqref{SPRH}.
	From Lemma \ref{lema01}(iv), we know that $F$ is convex with respect to $\alpha$, so we can not use Corollary \ref{cor01} in this case.
	However, the function
	\[g(\alpha)=\phi _{1}(F(t,\alpha))=\log\left(\frac{1-\theta}{H^{\alpha}(\lambda t)}+\theta\right)
	\]
	is convex in $\alpha$ for  $\theta\in(0,1)$ since
	\[\frac{\partial^2 g(\alpha)}{\partial \alpha^2}=
	\frac{\theta (1-\theta)   H^{\alpha}(\lambda t) \left( \log H^{\alpha}(\lambda t)\right)^2}{\left(1+\theta  \left( H^{\alpha}(\lambda t)-1\right)\right)^2}\geq 0.
	\]
	In addition, again from Lemma \ref{lema01}(iv), we know that $F$ is
	decreasing in $\alpha$. 
	{Assume that $H(t)=1-e^{-1.5 t^2}$ for $X_i$ and $H(t)=1-e^{-2.8 t^2}$ for $Y_i$, with $i=1,2$. Note that $F\geq_{ST}G$.}	
	Let us consider $\lambda=0.5$, $\bm\alpha=(0.3, 1.3)$ and
	$\bm\alpha^{*}=(0.5, 0.6)$. It is easy to verify that
	 ${\bm\alpha \geq }_{w}{\bm\alpha }^{\ast }$
	 and therefore, all the conditions in Proposition \ref{prop_01} hold.
	Figure \ref{Fig_maxx} shows the distribution functions of the two parallel systems for {$\theta=0.2$ (left) and $\theta=0.8$ (right)}.
	It can be seen that $X_{2:2}\geq _{ST}Y_{2:2}$
	according to Proposition \ref{prop_01}.	 
\end{example}

\begin{figure}[H]
	\centering
	\includegraphics[width=6 cm]{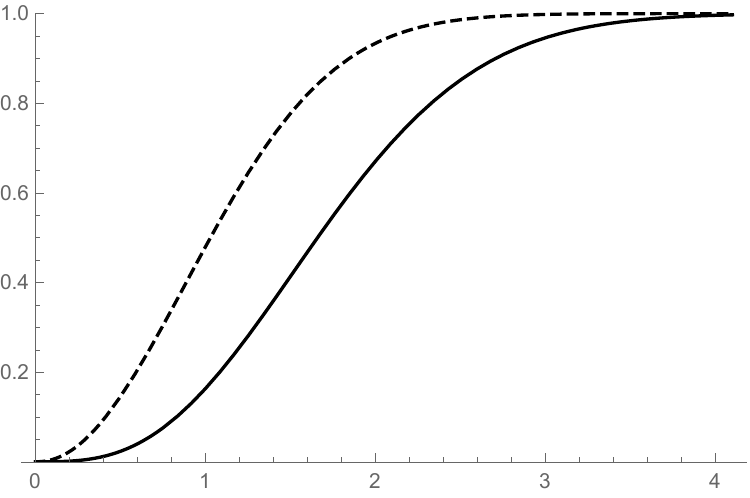}
	\includegraphics[width=6 cm]{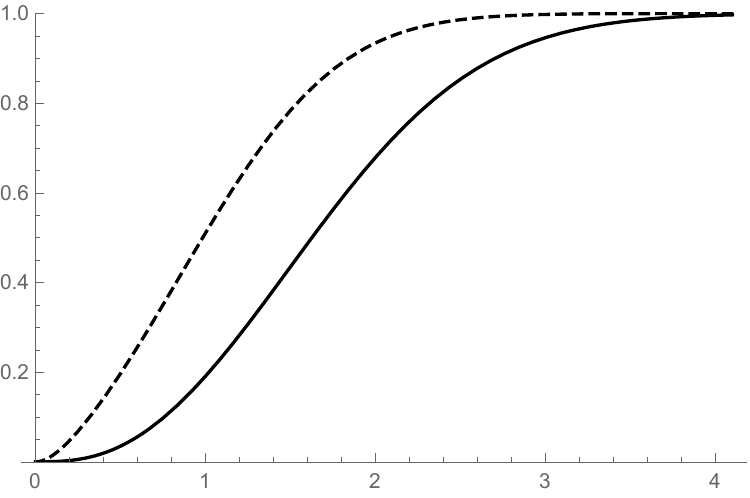}
	\caption{Plots of the distribution functions of $X_{2:2}$ (solid line)
		and $Y_{2:2}$  (dashed line)
		considered in Example \ref{ex_max}
		for dependent components assembled by AMH copulas
		when $\theta=0.2$  (left) and $\theta=0.8$ (right).}
	\label{Fig_maxx}
\end{figure}

\begin{remark}\label{rem_01}
	Analogously to Proposition \ref{prop_01}, it can be proved that
	if $C_{\psi _{1}}\geq C_{\psi _{2}}$, $F\leq _{ST}G$,
	$\phi _{1}(F(t,\lambda))$ concave in $\lambda$
	and $F\left( t,\lambda\right) $
	decreasing (increasing) in $\lambda$,
	then ${\bm\lambda} 
	\overset{w}{\geq }{\bm\lambda }^{\ast }\left( {\bm\lambda \geq }_{w}{%
		\bm\lambda }^{\ast }\right) $ implies $X_{n:n}\leq _{ST}Y_{n:n}$.
\end{remark}

\begin{proposition} \label{prop_03}
	Let $X_{1},\ldots ,X_{n}$ and $Y_{1},\ldots ,Y_{n}$ be two
	sets of nonnegative random variables with 
	$X_{i}\sim F\left( t,\lambda
	_{i}\right) $ and 
	$Y_{i}\sim G\left( t,\lambda _{i}^{\ast }\right) $ for $%
	i=1,\ldots ,n$, with associated Archimedean copulas $C_{\psi _{1}}$ and $%
	C_{\psi _{2}}$, respectively. 
	Suppose that $C_{\psi _{1}}\leq C_{\psi _{2}}$ and $F\geq _{ST}G$.
	If $F\left( t,\lambda\right)$ is increasing (decreasing) and log-concave in $\lambda$ and  
	$\psi _{1}$ is log-convex, then 
	${\bm\lambda} 
	\overset{w}{\geq }{\bm\lambda }^{\ast }\left( {\bm\lambda \geq }_{w}{%
		\bm\lambda }^{\ast }\right) $ implies $X_{n:n}\geq _{ST}Y_{n:n}$.
\end{proposition}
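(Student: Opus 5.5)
The plan is to reduce Proposition \ref{prop_03} to Proposition \ref{prop_01}. Under the stated hypotheses we show that $g(\lambda)=\phi_1(F(t,\lambda))$ is convex in $\lambda$ for each fixed $t$. Once that holds, Proposition \ref{prop_01} applies directly with the same assumptions $C_{\psi_1}\leq C_{\psi_2}$, $F\geq_{ST}G$ and $F$ increasing (decreasing) in $\lambda$. It then yields $X_{n:n}\geq_{ST}Y_{n:n}$ whenever ${\bm\lambda}\overset{w}{\geq}{\bm\lambda}^{*}$ (${\bm\lambda}\geq_w{\bm\lambda}^{*}$).

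To prove the convexity, write $g=\phi_1\circ F = (\phi_1\circ\exp)\circ(\log F)$. Set $h(\lambda)=\log F(t,\lambda)$, which is concave by assumption, and $k(x)=\phi_1(e^{x})$ for $x\leq 0$, so that $g=k\circ h$.

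\emph{Monotonicity of $k$.} Since $\phi_1$ is decreasing, $k$ is decreasing.

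\emph{Convexity of $k$.} This is where log-convexity of $\psi_1$ enters. The inverse of $k$ is $x\mapsto \log\psi_1(x)$ on $[0,\infty)$. By assumption $\log\psi_1$ is convex, and it is decreasing. The inverse of a decreasing convex function is again decreasing and convex, since reflecting the graph across the diagonal preserves convexity of the epigraph-type region for decreasing functions. Concretely, one can check it by differentiation: if $y=\log\psi_1(x)$, then $k'(y)=1/(\log\psi_1)'(x)<0$ and
\[k''(y)=-\frac{(\log\psi_1)''(x)}{\big((\log\psi_1)'(x)\big)^{3}}\geq 0,\]
because the denominator is negative and the numerator is nonnegative. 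Hence $k$ is decreasing and convex.

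\emph{Composition.} A decreasing convex function of a concave function is convex, so $g=k\circ h$ is convex in $\lambda$.

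The only delicate point is justifying the convexity of $k$ rigorously without assuming twice differentiability. If needed, this can be done by the chord argument for inverses of monotone convex functions, or by assuming smoothness as the paper does elsewhere. Possible boundary issues at $F=0$, where $\phi_1$ may be infinite, are handled by restricting to $t$ with $F(t,\lambda)>0$; the inequality between the distribution functions is trivial otherwise. With convexity of $g$ established, the rest of the argument is exactly the proof of Proposition \ref{prop_01}, so we would simply invoke it.
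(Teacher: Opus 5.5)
Your proof is correct, but it is organized differently from the paper's. The paper does not invoke Proposition \ref{prop_01}. It applies the Schur--Ostrowski criterion (Theorem 3.A.4 in \cite{Marshall:2011}) directly to $\varphi(\bm\lambda)=\psi_1\big(\sum_i\phi_1(F(t,\lambda_i))\big)$. This reduces everything to showing that $\ell=\ell_1\ell_2$ is increasing, where $\ell_1=\partial_\lambda\log F$ and $\ell_2=(\psi_1/\psi_1')\circ\phi_1\circ F$. The paper then checks this by tracking the sign and monotonicity of each factor separately, in the increasing and in the decreasing case.

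Since $\psi_1(\phi_1(F))=F$, that $\ell$ is exactly $g'(\lambda)=\partial_\lambda\phi_1(F(t,\lambda))$. So the paper is implicitly proving the same convexity of $g$ that you prove. It does so by a product-of-monotone-factors argument on $g'$, whereas you use the composition $g=(\phi_1\circ\exp)\circ\log F$, with $\phi_1\circ\exp$ the inverse of the decreasing convex function $\log\psi_1$.

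What your route gives:
\begin{itemize}
\item It is shorter, and it needs no differentiability if you use the chord argument for inverses.
\item It makes explicit that Proposition \ref{prop_03} is a special case of Proposition \ref{prop_01}. Log-concavity of $F$ together with log-convexity of $\psi_1$ is simply another sufficient condition for convexity of $\phi_1\circ F$, alongside concavity of $F$.
\item Convexity of $g$ does not depend on monotonicity of $F$ in $\lambda$, although the paper's sign bookkeeping uses it. Monotonicity therefore only enters through the weak-majorization step.
\item The argument dualizes. Log-convex $F$ with log-concave $\psi_1$ makes $\phi_1\circ F$ concave, so Remark \ref{rem_03} reduces to Remark \ref{rem_01}. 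Running it with $\bar F$ in place of $F$ gives the convexity hypothesis of Proposition \ref{prop_02} from the log-concavity assumptions of Proposition \ref{prop_04}.
\end{itemize}
The paper's direct computation is self-contained, but it hides these inclusions between its own results.
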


\begin{proof} 
	The distribution function of $X_{n:n}$ is as defined in \eqref{dist_max}.
	Analogously, we define the distribution function of $X^{*}_{n:n}$ by changing $\lambda$ by $\lambda^{*}$.
	First, we need to prove that 
	$X_{n:n}\geq _{ST}X_{n:n}^{\ast }$ if 
	${\bm\lambda} \overset{w}{\geq }{\bm\lambda }^{\ast }\left( {\bm\lambda \geq }_{w}{\bm\lambda }^{\ast }\right) $. 
	Let us denote 
	$\varphi \left( \lambda
	_{1},\ldots ,\lambda _{n}\right) =\psi _{1}\left( \sum_{i=1}^{n}\phi
	_{1}\left( F\left( t,\lambda _{i}\right) \right) \right) $. 
	Taking the
	derivative of $\varphi \left( {\bm\lambda }\right) $
	with respect to 
	$\lambda _{i}$, we get%
	\begin{eqnarray*}
		\frac{\partial \varphi \left( {\bm\lambda }\right) }{\partial \lambda
			_{i}} &=&\psi _{1}^{\prime }\left( \sum_{i=1}^{n}\phi _{1}\left( F\left(
		t,\lambda _{i}\right) \right) \right) \phi _{1}^{\prime }\left( F\left(
		t,\lambda _{i}\right) \right) \frac{\partial F\left( t,\lambda _{i}\right) }{%
			\partial \lambda _{i}} \\
		&=&\psi _{1}^{\prime }\left( \sum_{i=1}^{n}\phi _{1}\left( F\left( t,\lambda
		_{i}\right) \right) \right) \frac{1}{\psi _{1}^{\prime }\left( \phi
			_{1}\left( F\left( t,\lambda _{i}\right) \right) \right) }\frac{\partial
			F\left( t,\lambda _{i}\right) }{\partial \lambda _{i}}\\
		&\overset{sign}{=}&\frac{\partial
			F\left( t,\lambda _{i}\right) }{\partial \lambda _{i}}.	
	\end{eqnarray*}%
 For any pair 
	$1\leq i\leq j\leq n$, we have%
	\begin{eqnarray}\label{eq_fi}\nonumber
		\left( \lambda _{i}-\lambda _{j}\right) \left( \frac{\partial \varphi
			\left( {\bm\lambda }\right) }{\partial \lambda _{i}}-\frac{\partial
			\varphi \left( {\bm\lambda }\right) }{\partial \lambda _{j}}\right)
		&=&\left( \lambda _{i}-\lambda _{j}\right) \psi _{1}^{\prime }\left(
		\sum_{i=1}^{n}\phi _{1}\left( F\left( t,\lambda _{i}\right) \right) \right)\\\nonumber
		&&
		\left( \frac{1}{\psi _{1}^{\prime }\left( \phi _{1}\left( F\left( t,\lambda
			_{i}\right) \right) \right) }\frac{\partial F\left( t,\lambda _{i}\right) }{%
			\partial \lambda _{i}}-\frac{1}{\psi _{1}^{\prime }\left( \phi _{1}\left(
			F\left( t,\lambda _{j}\right) \right) \right) }\frac{\partial F\left(
			t,\lambda _{j}\right) }{\partial \lambda _{j}}\right) \\
		&\overset{sign}{=}&
		-\left( \lambda _{i}-\lambda _{j}\right) \left( \frac{%
			\partial _{\lambda _{i}}F\left( t,\lambda _{i}\right) }{\psi _{1}^{\prime
			}\left( \phi _{1}\left( F\left( t,\lambda _{i}\right) \right) \right) }-%
		\frac{\partial _{\lambda _{j}}F\left( t,\lambda _{j}\right) }{\psi
			_{1}^{\prime }\left( \phi _{1}\left( F\left( t,\lambda _{j}\right) \right)
			\right) }\right)
	\end{eqnarray}%
	since, $\psi _{1}^{\prime }\leq 0$. Now, let us denote
	\begin{eqnarray}\label{eq_w}
	\ell(\lambda)=\frac{\partial_{\lambda}F\left( t,\lambda\right)}{F\left( t,\lambda\right)}
	\cdot
	\frac{\psi
		_{1}\left( \phi _{1}\left( F\left( t,\lambda\right) \right)
		\right)}{\psi
		_{1}^{\prime }\left( \phi _{1}\left( F\left( t,\lambda\right) \right)
		\right)}
	=\ell_1(\lambda)\cdot \ell_2(\lambda),
	\end{eqnarray}
	where
	\[\ell_1(\lambda)=\frac{\partial\log  F\left( t,\lambda\right)}{\partial \lambda} 
	\quad\textup{and}\quad
	\ell_2(\lambda)=\frac{\psi_{1}}{\psi
		_{1}^{\prime }}
	\Big( \phi _{1}\left( F\left( t,\lambda\right) \right)
	\Big).
	\]
	Then, \eqref{eq_fi} can be rewritten as
	\[\left( \lambda _{i}-\lambda _{j}\right) \left( \frac{\partial \varphi
		\left( {\bm\lambda }\right) }{\partial \lambda _{i}}-\frac{\partial
		\varphi \left( {\bm\lambda }\right) }{\partial \lambda _{j}}\right)
	\overset{sign}{=}
	-\left( \lambda _{i}-\lambda _{j}\right)
	\Big(\ell(\lambda_{i})-\ell(\lambda_{j})	\Big),	\]
	since $\psi
	_{1}\left( \phi _{1}\left( F\left( t,\lambda\right) \right)
	\right)= F\left( t,\lambda\right)$.
	Therefore, we need to prove that $\ell$ is increasing.
	
	Note that the function $\ell_2$ is negative-valued since $\psi
	_{1}^{\prime }\leq 0$.	
	In addition, observe that if $\psi _{1}/\psi _{1}^{\prime }$
	is decreasing  in $t$, which is equivalent to $\psi _{1}$ log-convex, 
	and $F$ is increasing (decreasing), then $\ell_2$ 
	is increasing (decreasing).
	On the other hand, it is evident that $\ell_1$ is decreasing since $F$ is log-concave in $\lambda$ and it is positive-valued (negative-valued) if 
	 $F$ is increasing (decreasing) in $\lambda$.
	Therefore,
	from \eqref{eq_w}, we have that $\ell$
 is increasing when $F$ is increasing (decreasing) and log-concave in $\lambda$, and $\psi _{1}$ log-convex.
 Thus, from Theorem 3.A.4 in \cite{Marshall:2011}, we know that
	the function $\varphi \left( {\bm\lambda }\right) $ is Schur-concave
	with respect to ${\bm\lambda }$. 
	In other words, $X_{n:n}\geq
	_{ST}X_{n:n}^{\ast }$ if ${\bm\lambda} \overset{w}{\geq }{\bm\lambda }^{\ast
	}\left( {\bm\lambda \geq }_{w}{\bm\lambda }^{\ast }\right) $.
The rest
of the proof is the same than in Proposition \ref{prop_01}, hence omitted.
\end{proof} 

\bigskip

The next example illustrates the result of Proposition \ref{prop_03}.

\begin{example}\label{ex_logconcave}
	Let us consider two parallel systems $X_{3:3}$ and $X^{*}_{3:3}$ with $F\overset{ST}{=}G$, 	for simplicity. 
	We assume that $F$ belongs to the TIIHL-RS family
	defined in \eqref{eq_tiil},
	where $H$ is an exponential distribution with hazard rate $2$
	 and 
	${C}_{\psi _{i}}$ is a Clayton copula with generator
	$\psi _{i}=(\theta_i t +1)^{-1/\theta_i}$ with $\theta_i\geq 0$
	for $i=1,2$.
	Then, their distribution functions are
	\begin{equation*}
	{F}_{X_{3:3}}(t)=\left[ \left( \frac{2 \left(1-e^{-2 t}\right)^{\alpha_1}}{1+\left(1-e^{-2 t}\right)^{\alpha_1}}\right) ^{-\theta _{1}}+\left( \frac{2 \left(1-e^{-2 t}\right)^{\alpha_2}}{1+\left(1-e^{-2 t}\right)^{\alpha_2}}\right)
	^{-\theta _{1}}
	+\left( \frac{2 \left(1-e^{-2 t}\right)^{\alpha_3}}{1+\left(1-e^{-2 t}\right)^{\alpha_3}}\right)
	^{-\theta _{1}}
	-2\right] ^{-1/\theta _{1}}
	\end{equation*}%
	and%
	\begin{equation*}
	{F}_{X_{3:3}^{\ast }}(t)=\left[ \left( \frac{2 \left(1-e^{-2 t}\right)^{\alpha_1^{\ast }}}{1+\left(1-e^{-2 t}\right)^{\alpha_1^{\ast }}}\right) ^{-\theta _{1}}+\left( \frac{2 \left(1-e^{-2 t}\right)^{\alpha_2^{\ast }}}{1+\left(1-e^{-2 t}\right)^{\alpha_2^{\ast }}}\right)
	^{-\theta _{1}}
	+\left( \frac{2 \left(1-e^{-2 t}\right)^{\alpha_3^{\ast }}}{1+\left(1-e^{-2 t}\right)^{\alpha_3^{\ast }}}\right)
	^{-\theta _{1}}
	-2\right]
	^{-1/\theta _{2}},
	\end{equation*}
	respectively.
	From Lemma \ref{lema01}(i), we know that $F$ is convex in $\alpha$, so
	we can not apply Corollary \ref{cor01}. However $F$ is log-concave and decreasing in $\alpha$ for any $\lambda>0$, so that we can use Proposition \ref{prop_03}.		
	Also notice that $\log \psi _{1}(t)=-(1/\theta_1)\log(\theta_1 t +1) $ and it is easy to check that $\psi _{1}$ is log-convex.
	
	On the other hand, we have
	$\phi _{2}\circ \psi _{1}(t)=\theta^{-1}_{2}(\theta_1 t +1)^{\theta_{2}/\theta_{1}}$.
	Taking the derivative of $\phi _{2}\circ \psi _{1}$
	twice with respect to $t$, it can be seen that
	$[\phi _{2}\circ \psi _{1}(t)]''\overset{sign}{=}
	\theta_{2}-\theta_{1}$. Therefore, 
	$\phi _{2}\circ \psi _{1}$ is convex for $\theta_{2}>\theta_{1}$
	and concave for $\theta_{2}<\theta_{1}$. In other words, 
	$C_{\psi _{1}}\leq(\geq) C_{\psi _{2}}$ if $\theta_{1}<(>)\theta_{2}$.
		Finally, we take 
	${\bm\alpha}=(0.6, 1.1, 2.5)$ and ${\bm\alpha}^{\ast }=(0.4, 1.4, 2.1)$.
	It is then easy to observe that
	${\bm\alpha}{\geq_{w}}{\bm\alpha }^{\ast }$.
	Figure \ref{Fig_logconcave} shows the distribution functions of 
	$X_{3:3}$ (solid line) and $X_{3:3}^{*}$ (dashed line)
	for $\theta_1=2$ and $\theta_2=15$ (left) and 
		for $\theta_1=20$ and $\theta_2=1.5$ (right).
	As it is seen in Figure \ref{Fig_logconcave}(left), 
	$X_{3:3}\geq_{ST}X_{3:3}^{*}$, which coincides with the result of Proposition \ref{prop_03}, since all the conditions  
	in that proposition are satisfied.
	However, in Figure \ref{Fig_logconcave}(right), the distribution functions cross each other
	and this means that $X_{3:3}$ and $X^{*}_{3:3}$ are not ordered
	since $\phi _{2}\circ \psi _{1}$ is not
	superadditive (i.e. $C_{\psi _{1}}\nleq C_{\psi _{2}}$).
\end{example}

\begin{figure}[H]
	\centering
	\includegraphics[width=6 cm]{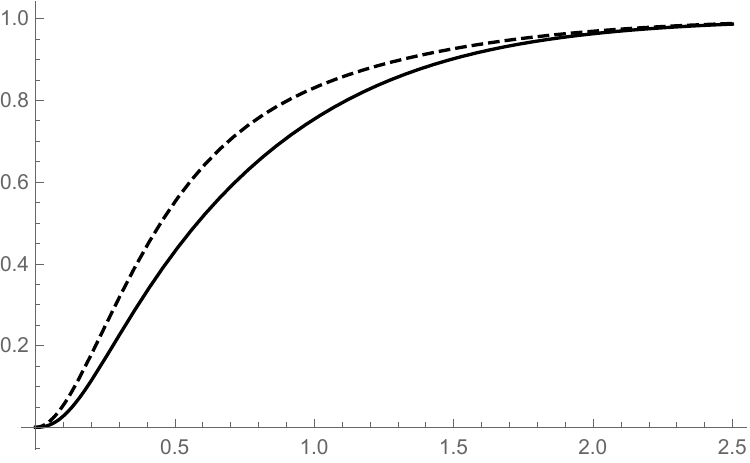}
	\includegraphics[width=6 cm]{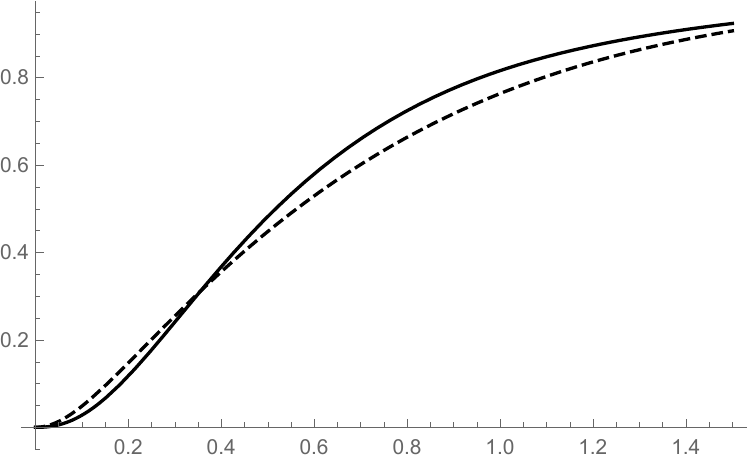}
	\caption{Plots of the distributions functions for  $X_{3:3}$ (solid line) and $X_{3:3}^{*}$ (dashed line)
		for $\theta_1=2$ and $\theta_2=15$ (left) and 
		for $\theta_1=20$ and $\theta_2=1.5$ (right) considered in Example \protect\ref{ex_logconcave}.}
	\label{Fig_logconcave}
\end{figure}

{From Lemma \ref{lema01}, we know that the TIIHL-RS model for $\alpha>0$, the MPRHR model for $\alpha>1$,
	the SPRH model for $\lambda>0$ when $\tilde{r}$ decreasing, and the EL-S model for $\lambda>0$ when $\tilde{r}$ decreasing
	verify the conditions on $F$ in Proposition \ref{prop_03}. Therefore, we can obtain comparisons between parallel systems following
	these type of distributions.}

\begin{remark}\label{rem_03}
	It is worth mentioning that,
		from \eqref{eq_w}, we have that the function $\ell$
	is decreasing when $F$ is decreasing (increasing) and log-convex in $\lambda$ and $\psi _{1}$ log-concave.
	Thus,
	the function $\varphi \left( {\bm\lambda }\right) $ is Schur-convex
	with respect to ${\bm\lambda }$. 
	Then, if $C_{\psi _{1}}\geq C_{\psi _{2}}$ and $F\leq _{ST}G$,	
	the condition
	 ${\bm\lambda} \overset{w}{\geq }{\bm\lambda }^{\ast
	}\left( {\bm\lambda \geq }_{w}{\bm\lambda }^{\ast }\right) $
implies
 $X_{n:n}\leq
	_{ST}Y_{n:n}$.
\end{remark}

{Analogously, again from Lemma \ref{lema01}, we know that the MPRHR model for $0 <\alpha\leq 1$,
	the SPRH model for $\lambda>0$ when $\tilde{r}$ increasing, and the EL-S model for $\theta>0$ when $x\tilde{r}(x)$ increasing in $x$
	verify the conditions on $F$ in Remark \ref{rem_03}.
	Thus, we can also obtain comparisons between parallel systems following
	these type of models.
The condition $x\tilde{r}\left( x\right)$ increasing (decreasing)
and other related aging notions are investigated in \cite{OlT-15}.
For distributions with bounded support and
bathtub shaped hazard function, see for instance \cite{LJX16}.}


\subsection{Series systems with Archimedean copulas}

Presently, let us consider two series systems $X_{1:n}$ and $Y_{1:n}$ with dependent
heterogeneous components assembled by Archimedean survival copulas $%
\widehat{C}_{\psi _{1}}$ and $\widehat{C}_{\psi _{2}}$, respectively. Then, their distributions
functions are 
\[F_{X_{1:n}}(t)=D_{1}\left( F\left( t,\lambda _{1}\right) ,\ldots
,F\left( t,\lambda _{n}\right) \right) =1-\widehat{C}_{\psi _{1}}\left(
1-F\left(t, \lambda _{1}\right) ,\ldots ,1-F\left(t, \lambda _{n}\right) \right) 
\]
and 
\[F_{Y_{1:n}}(t)=D_{2}\left( G\left(t, \lambda _{1}^{\ast }\right) ,\ldots
,G\left( t,\lambda _{n}^{\ast }\right) \right) =1-\widehat{C}_{\psi
	_{2}}\left( 1-G\left(t, \lambda _{1}^{\ast }\right) ,\ldots ,1-G\left(t, \lambda
_{n}^{\ast }\right) \right), \] 
respectively. As before, if $\phi _{2}\circ
\psi _{1}$ is superadditive then $\widehat{C}_{\psi _{1}}\left( u\right)
\leq \widehat{C}_{\psi _{2}}\left( u\right) $ for all $u\in \left[ 0,1\right]
^{n}$, i.e. $D_{1}\geq D_{2}$. It is easy to check that $D_{1}$ is
Schur-convex, since any Archimedean copula is Schur-concave from Theorem 4 in \cite{DS-2003}. Then, from Theorem \ref{th02}, we can establish the following result.

\begin{corollary}\label{cor02} 
	Let $X_{1},\ldots ,X_{n}$ and $Y_{1},\ldots ,Y_{n}$ be two sets
	of nonnegative random variables with $X_{i}\sim F\left( t,\lambda
	_{i}\right) $ and $Y_{i}\sim G\left( t,\lambda _{i}^{\ast }\right) $ for $%
	i=1,\ldots ,n$, with associated Archimedean survival copulas $\widehat{C}_{\psi _{1}}$ and $%
	\widehat{C}_{\psi _{2}}$, respectively. Suppose that $\widehat C_{\psi _{1}}\leq \widehat C_{\psi _{2}}$, $F\leq _{ST}G$ and $F\left( t,\lambda\right) $ is
	decreasing (increasing) and convex in $\lambda$. If ${\bm\lambda} 
	\overset{w}{\geq }{\bm\lambda }^{\ast }\left( {\bm\lambda \geq }_{w}{%
		\bm\lambda }^{\ast }\right) $ then $X_{1:n}\leq _{ST}Y_{1:n}$.
\end{corollary}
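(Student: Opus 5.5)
The plan is to reduce Corollary \ref{cor02} to Theorem \ref{th02}, exactly as Corollary \ref{cor01} was obtained from Theorem \ref{th01}. First I write the distribution functions of the two series systems in distorted form:
\[F_{X_{1:n}}(t)=D_{1}\left( F(t,\lambda_1),\ldots,F(t,\lambda_n)\right),\qquad F_{Y_{1:n}}(t)=D_{2}\left( G(t,\lambda_1^{\ast}),\ldots,G(t,\lambda_n^{\ast})\right),\]
with $D_{k}(u_1,\ldots,u_n)=1-\widehat{C}_{\psi_k}(1-u_1,\ldots,1-u_n)$ for $k=1,2$. It then suffices to check the structural hypotheses of Theorem \ref{th02} on $D_1$ and $D_2$. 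The hypotheses $F\leq_{ST}G$, $F(t,\lambda)$ decreasing (increasing) and convex in $\lambda$, and ${\bm\lambda}\overset{w}{\geq}{\bm\lambda}^{\ast}$ (${\bm\lambda}\geq_{w}{\bm\lambda}^{\ast}$) are assumed verbatim in the corollary.

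\textbf{Ordering of the distortions.} From $\widehat{C}_{\psi_1}\leq\widehat{C}_{\psi_2}$ pointwise on $[0,1]^n$, evaluating at $(1-u_1,\ldots,1-u_n)$ gives
\[1-\widehat{C}_{\psi_1}(1-{\bm u})\geq 1-\widehat{C}_{\psi_2}(1-{\bm u}),\]
that is, $D_1\geq D_2$.

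\textbf{Schur-convexity of $D_1$.} By Theorem 4 in \cite{DS-2003}, the Archimedean copula $\widehat{C}_{\psi_1}$ is Schur-concave. The map ${\bm u}\mapsto 1-{\bm u}$ preserves majorization in the needed sense: ${\bm u}\prec{\bm v}$ if and only if $1-{\bm u}\prec 1-{\bm v}$, since the coordinate sums are preserved and the order of the ordered components is reversed. Hence ${\bm u}\mapsto\widehat{C}_{\psi_1}(1-{\bm u})$ is Schur-concave, and $D_1=1-\widehat{C}_{\psi_1}(1-\cdot)$ is Schur-convex. $D_1$ is also increasing, because $\widehat{C}_{\psi_1}$ is increasing in each argument. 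Moreover $D_1(0,\ldots,0)=1-\widehat{C}_{\psi_1}(1,\ldots,1)=0$ and $D_1(1,\ldots,1)=1$, so $D_1$ is a genuine distortion function, and the same holds for $D_2$.

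\textbf{Conclusion.} Theorem \ref{th02} then yields $F_{X_{1:n}}\leq_{ST}F_{Y_{1:n}}$, i.e. $X_{1:n}\leq_{ST}Y_{1:n}$. The only step needing care is the transfer of Schur-concavity through the reflection $u\mapsto 1-u$. I would verify it directly through the Schur--Ostrowski criterion: for $h({\bm u})=\widehat{C}_{\psi_1}(1-{\bm u})$,
\[(u_i-u_j)\left(\partial_i h-\partial_j h\right)=(v_i-v_j)\left(\partial_i\widehat{C}_{\psi_1}-\partial_j\widehat{C}_{\psi_1}\right)({\bm v})\leq 0,\qquad {\bm v}=1-{\bm u},\]
which gives Schur-concavity of $h$. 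Everything else is bookkeeping.
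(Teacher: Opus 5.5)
Your proposal is correct and follows essentially the same route as the paper. The paper also writes $D_k({\bm u})=1-\widehat{C}_{\psi_k}(1-{\bm u})$, obtains $D_1\geq D_2$ from $\widehat C_{\psi_1}\leq\widehat C_{\psi_2}$, gets Schur-convexity of $D_1$ from the Schur-concavity of Archimedean copulas (Theorem 4 in \cite{DS-2003}), and invokes Theorem \ref{th02}; your check that ${\bm u}\mapsto 1-{\bm u}$ preserves majorization and that $D_1$ is increasing fills in the step the paper calls ``easy to check''.
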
 

In the following result, we establish the usual stochastic order between the lifetimes of series systems when their components are
assembled by Archimedean copulas under different conditions than those considered in Corollary \ref{cor02} above.

\begin{proposition}\label{prop_02}
	Let $X_{1},\ldots ,X_{n}$ and $Y_{1},\ldots ,Y_{n}$ be two
	sets of nonnegative random variables with $X_{i}\sim F\left( t,\lambda
	_{i}\right) $ and $Y_{i}\sim G\left( t,\lambda _{i}^{\ast }\right) $ for $%
	i=1,\ldots ,n$, with associated Archimedean survival copulas $\widehat C_{\psi _{1}}$ and $%
	 \widehat C_{\psi _{2}}$, respectively. Suppose that $\widehat C_{\psi _{1}}\leq \widehat C_{\psi _{2}}$ and $F\leq _{ST}G$. 
	If $\phi _{1}(1-F(t,\lambda))$ is convex in $\lambda$
	and $F\left( t,\lambda\right) $ is
	decreasing (increasing) in $\lambda$,
	then ${\bm\lambda} 
	\overset{w}{\geq }{\bm\lambda }^{\ast }\left( {\bm\lambda \geq }_{w}{%
		\bm\lambda }^{\ast }\right) $ implies $X_{1:n}\leq _{ST}Y_{1:n}$.
\end{proposition}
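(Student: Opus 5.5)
Following the proof of Proposition \ref{prop_01}, we work with reliability functions. The reliability function of $X_{1:n}$ is
\[
\bar F_{X_{1:n}}(t)=\widehat C_{\psi_1}\big(\bar F(t,\lambda_1),\ldots,\bar F(t,\lambda_n)\big)=\psi_1\Big(\sum_{i=1}^n h(\lambda_i)\Big),
\qquad h(\lambda)=\phi_1\big(1-F(t,\lambda)\big).
\]
We pass through the intermediate system $X^{*}_{1:n}$. Its components satisfy $X_i^{*}\sim F(t,\lambda_i^{*})$ and share the survival copula $\widehat C_{\psi_1}$. The plan is to show $X_{1:n}\le_{ST}X^{*}_{1:n}\le_{ST}Y_{1:n}$.

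\emph{First step: $X_{1:n}\le_{ST}X^{*}_{1:n}$.} Since $\phi_1$ is decreasing,
\[
\frac{\partial h}{\partial\lambda}=-\phi_1'\big(\bar F(t,\lambda)\big)\,\frac{\partial F(t,\lambda)}{\partial\lambda}\overset{sign}{=}\frac{\partial F(t,\lambda)}{\partial\lambda}.
\]
So if $F$ is decreasing (increasing) in $\lambda$, then $h$ is decreasing (increasing), and by hypothesis $h$ is convex. Proposition 3.C.1.b in \cite{Marshall:2011} then gives that $\sum_i h(\lambda_i)$ is decreasing (increasing) and Schur-convex. Hence ${\bm\lambda}\overset{w}{\geq}{\bm\lambda}^{*}$ $({\bm\lambda}\geq_w{\bm\lambda}^{*})$ implies $\sum_i h(\lambda_i)\ge\sum_i h(\lambda_i^{*})$. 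The monotonicity direction has to match the type of weak majorization here, exactly as in Proposition \ref{prop_01}. Because $\psi_1$ is decreasing, $\bar F_{X_{1:n}}(t)\le\bar F_{X^{*}_{1:n}}(t)$ for every $t$.

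\emph{Second step: $X^{*}_{1:n}\le_{ST}Y_{1:n}$.} Since $\widehat C_{\psi_1}\le\widehat C_{\psi_2}$, we get
\[
\bar F_{X^{*}_{1:n}}=\widehat C_{\psi_1}\big(\bar F(\lambda_i^{*})\big)\le\widehat C_{\psi_2}\big(\bar F(\lambda_i^{*})\big).
\]
The assumption $F\le_{ST}G$ gives $\bar F(t,\lambda_i^{*})\le\bar G(t,\lambda_i^{*})$ for each $i$. Since the copula $\widehat C_{\psi_2}$ is increasing in each argument (equivalently, by Proposition 2.3(i) in \cite{NASS16} applied to the dual distortion), $\widehat C_{\psi_2}(\bar F(\lambda_i^{*}))\le\widehat C_{\psi_2}(\bar G(\lambda_i^{*}))=\bar F_{Y_{1:n}}$. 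Chaining the two steps gives $X_{1:n}\le_{ST}Y_{1:n}$.

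The only delicate point is the first step: checking that the sign of $h'$ pairs with the correct weak majorization. A decreasing convex function requires $\overset{w}{\geq}$, and an increasing convex function requires $\geq_w$. We also need that increasing the sum, after composing with the decreasing $\psi_1$, lowers the survival function. This gives the direction $X_{1:n}\le_{ST}$ stated in the proposition, not the reverse.
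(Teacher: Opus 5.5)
Your proposal is correct and follows essentially the same route as the paper. You use the intermediate system $X^{*}_{1:n}$ with survival copula $\widehat C_{\psi_1}$, and apply Proposition 3.C.1.b to $h=\phi_1(1-F(t,\cdot))$ with the matching weak-majorization direction to get $X_{1:n}\le_{ST}X^{*}_{1:n}$. You then conclude from $\widehat C_{\psi_1}\le\widehat C_{\psi_2}$, monotonicity of $\widehat C_{\psi_2}$ and $\bar F\le\bar G$. Your last step is in fact stated more carefully than the paper's, which writes $F,G$ instead of $\bar F,\bar G$ inside $\widehat C_{\psi_2}$ and passes through an extra intermediate $Y^{*}_{1:n}$.
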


\begin{proof}
	The distribution function of $X_{1:n}$ is%
	\begin{equation}\label{eq_min}
	F_{X_{1:n}}(t)=1-\psi _{1}\left( \sum_{i=1}^{n}\phi _{1}\left( 1-F\left(
	t,\lambda _{i}\right) \right) \right) = 
	1-\psi _{1}\left( \sum_{i=1}^{n} g(\lambda _{i})   \right),
	\end{equation}%
	where $g(\lambda_i)=\phi _{1}(1-F(t,\lambda_i))$.
	Let $X_{1}^{\ast },\ldots ,X_{n}^{\ast }$ be a set of nonnegative random
	variables with $X_{i}^{\ast }\sim F\left( t,\lambda _{i}^{\ast }\right) $
	for $i=1,\ldots ,n$, and the same associated Archimedean copula than $%
	X_{1},\ldots ,X_{n}$. Then, the distribution function of $X_{1:n}^{\ast }$ is%
	\begin{equation*}
	F_{X_{1:n}^{\ast }}(t)=1-\psi _{1}\left( \sum_{i=1}^{n}\phi _{1}\left(
	1-F\left( t,\lambda _{i}^{\ast }\right) \right) \right) 
	= 
	1-\psi _{1}\left( \sum_{i=1}^{n} g(\lambda _{i}^{*})   \right).
	\end{equation*}%
	First, we need to prove that $X_{1:n}\leq _{ST}X_{1:n}^{\ast }$ if ${%
		\bm\lambda} \overset{w}{\geq }{\bm\lambda }^{\ast }\left( {\bm\lambda \geq }_{w}%
	{\bm\lambda }^{\ast }\right) $. 
		It is evident that
	\[\frac{\partial g(\lambda_i)}{\partial \lambda_i}=-
	\phi' _{1}(1-F(t,\lambda_i))\frac{\partial F(t,\lambda_i)}{\partial \lambda_i}
	\overset{sign}{=}\frac{\partial F(t,\lambda_i)}{\partial \lambda_i}.
	\]
	Then, from Proposition 3.C.1.b in \cite{Marshall:2011}, we have that
	$\sum_{i=1}^{n} g(\lambda_i)$ 
	is decreasing (increasing) and Shur-convex when 
	$F(t,\lambda_i)$ is decreasing (increasing) in $\lambda_i$.
	Now, as $\psi _{1}$ is decreasing, we get
	$\psi _{1}\left(\sum_{i=1}^{n} g(\lambda_i) \right)\leq \psi _{1}\left(\sum_{i=1}^{n} g(\lambda_i^{\ast }) \right)$. 
	In other words, $X_{1:n}\leq
	_{ST}X_{1:n}^{\ast }$ if ${\bm\lambda} \overset{w}{\geq }{\bm\lambda }^{\ast
	}\left( {\bm\lambda \geq }_{w}{\bm\lambda }^{\ast }\right) $.

		On the other hand, let $Y_{1}^{\ast },\ldots ,Y_{n}^{\ast }$ be a set of
	nonnegative random variables with $Y_{i}^{\ast }\sim F\left( t,\lambda
	_{i}^{\ast }\right) $ for $i=1,\ldots ,n$, and the same associated
	Archimedean copula than $Y_{1},\ldots ,Y_{n}$. Then, the distribution
	function of $Y_{1:n}^{\ast }$ is
	
	\begin{equation*}
	F_{Y_{1:n}^{\ast }}(t)=1-\psi _{2}\left( \sum_{i=1}^{n}\phi _{2}\left(
	1-F\left( t,\lambda _{i}^{\ast }\right) \right) \right) =1-\widehat{C}_{\psi
		_{2}}\left( 1-F\left( t,\lambda _{1}^{\ast }\right) ,\ldots ,1-F\left(
	t,\lambda _{n}^{\ast }\right) \right) .
	\end{equation*}%
	Next, again as $\hat{C}_{\psi _{1}}\leq \hat{C}_{\psi _{2}}$, then $%
	F_{X_{n:n}^{\ast }}\geq F_{Y_{n:n}^{\ast }}$. In other words, $%
	X_{1:n}^{\ast }\leq _{ST}Y_{1:n}^{\ast }$. Finally, we know, from
	Proposition 2.3(i) in \cite{NASS16}, that 
	\[\widehat{C}_{\psi _{2}}\left(
	F\left( t,\lambda _{1}^{\ast }\right) ,\ldots ,F\left( t,\lambda _{n}^{\ast
	}\right) \right) \leq \widehat{C}_{\psi _{2}}\left( G\left( t,\lambda
	_{1}^{\ast }\right) ,\ldots ,G\left( t,\lambda _{n}^{\ast }\right) \right)\]
	since $F\leq _{ST}G$. This complete the proof.
\end{proof}

\begin{remark}
	Taking the second partial derivative of $g=\phi _{1}\circ \bar{F}$
	with respect to $\lambda$, we have
	\[\frac{\partial^{2}g(\lambda)}{\partial\lambda^{2}}=
	\phi'' _{1}(1-F(t,\lambda))\left(-\frac{\partial F(t,\lambda) }{\partial\lambda}\right)^{2}-
	\phi' _{1}(F(t,\lambda))
	\frac{\partial^{2} F(t,\lambda) }{\partial\lambda^{2}}.
	\]
	Therefore, if $F$ is convex in $\lambda$, then $g$ is convex since
	$\phi _{1}$ is convex.
	In other words, the condition $\phi _{1}(1-F(t,\lambda))$  convex in $\lambda$ in Proposition \ref{prop_02} is a weaker condition than the condition
	$F$ convex with respect to $\lambda$ in Corollary \ref{cor02}.
\end{remark}

\begin{remark}\label{rem_02}
	Analogously to Proposition \ref{prop_02}, it can be proved that
	if $\widehat C_{\psi _{1}}\geq \widehat C_{\psi _{2}}$, $F\geq _{ST}G$,
	$\phi _{1}(1-F(t,\lambda))$ concave in $\lambda$
	and $F\left( t,\lambda\right) $
	increasing (decreasing) in $\lambda$,
	then ${\bm\lambda} 
	\overset{w}{\geq }{\bm\lambda }^{\ast }\left( {\bm\lambda \geq }_{w}{%
		\bm\lambda }^{\ast }\right) $ implies $X_{1:n}\geq _{ST}Y_{1:n}$.
\end{remark}

To illustrate Proposition \ref{prop_02}, we consider the following numerical example.

\begin{example}\label{ex_min}
	Let us consider two series systems $X_{1:3}$ and $X^{*}_{1:3}$
	which dependent components having exponential distributions with hazard rate $\lambda>0$. It is easy to verify that $F(t,\lambda)=1-e^{-\lambda t}$
	is concave with respect to $\lambda$. Then,  we can not use 
	Corollary \ref{cor02} in order to compare the two series systems. We assume that the components are assembled by the same Clayton copula with generator $\psi(x)=(\theta x+1)^{-1/\theta}$ with $\theta>0$. Then, we have $\phi(x)=\theta^{-1}(x^{-\theta} -1)$, so that
	\[g(\lambda)=\phi(1-F(t,\lambda))=\theta^{-1}\Big((e^{-\lambda t})^{-\theta} -1\Big)	\]
	and then
	\[\frac{\partial^{2}g(\lambda)}{\partial\lambda^{2}}=
	\theta  t^2 \left(e^{-\lambda  t}\right)^{-\theta }\geq 0,	\]
	which means that $\phi(1-F(t,\lambda))$ is convex in $\lambda$.
	Set $\bm\lambda =(0.7, 0.8, 3.4)$ and $\bm\lambda^{*} =(0.9, 1.3, 2.5)$.
	Note that ${\bm\lambda \geq }_{w}{\bm\lambda }^{\ast } $.
	Figure \ref{Fig_minn} displays the distribution functions of 
	$X_{1:3}$ (solid line) and 
	$X_{1:3}^{*}$ (dashed line) for $\theta=0.6$ (left) and $\theta=6$ (right),
	from which we can see that $X_{1:3}\leq _{ST}X^{*}_{1:3}$
	as expected from Proposition \ref{prop_02}.	
\end{example}

\begin{figure}[H]
	\centering
	\includegraphics[width=6 cm]{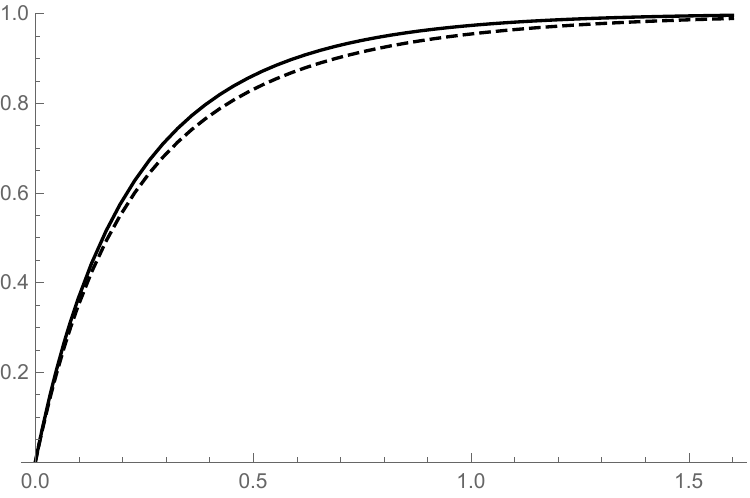}
		\includegraphics[width=6 cm]{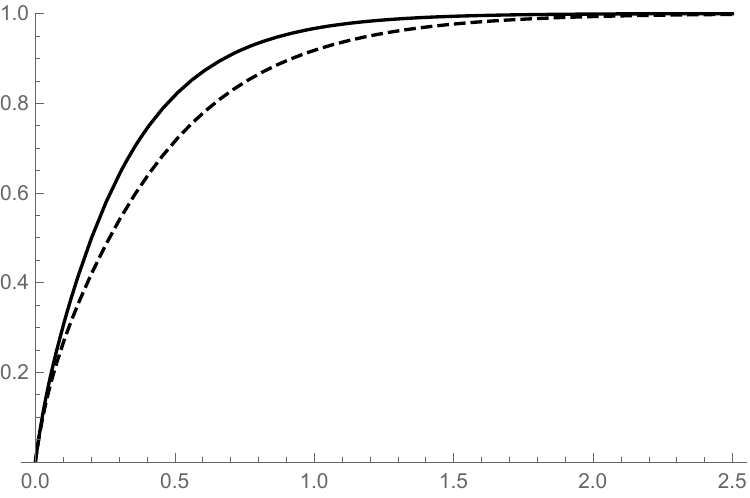}
	\caption{Plots of the distribution functions of  $X_{1:3}$ (solid line) and 
		$X_{1:3}^{*}$ (dashed line) consider in Example \ref{ex_min}
		for $\theta=0.6$ (left) and $\theta=6$ (right).}
	\label{Fig_minn}
\end{figure}

The following example shows that the condition  $\widehat C_{\psi _{1}}\leq \widehat C_{\psi _{2}}$ given in the
above Proposition \ref{prop_02} can not be dropped.

\begin{example}\label{ex_01}
	Let us consider two series systems $X_{1:2}$ and $X^{*}_{1:2}$ with $F\overset{ST}{=}G$, 	for simplicity. 
	We assume that $F$ is defined as in \eqref{eq_ee} and 
	$\widehat{C}_{\psi _{i}}$ is a Clayton copula with generator
	$\psi _{i}=(\theta_i t +1)^{-1/\theta_i}$ with $\theta_i\geq 0$
	for $i=1,2$.
	Then, their reliability functions are
	\begin{equation*}
	\bar{F}_{X_{1:2}}(t)=\left[ \left( \frac{\alpha e^{-\lambda _{1}t}}{1-%
		\overline{\alpha }e^{-\lambda _{1}t}}\right) ^{-\theta _{1}}+\left( \frac{%
		\alpha e^{-\lambda _{2}t}}{1-\overline{\alpha }e^{-\lambda _{2}t}}\right)
	^{-\theta _{1}}-1\right] ^{-1/\theta _{1}}
	\end{equation*}%
	and%
	\begin{equation*}
	\bar{F}_{X_{1:2}^{\ast }}(t)=\left[ \left( \frac{\alpha e^{-\lambda
			_{1}^{\ast }t}}{1-\overline{\alpha }e^{-\lambda _{1}^{\ast }t}}\right)
	^{-\theta _{2}}+\left( \frac{\alpha e^{-\lambda _{2}^{\ast }t}}{1-\overline{%
			\alpha }e^{-\lambda _{2}^{\ast }t}}\right) ^{-\theta _{2}}-1\right]
	^{-1/\theta _{2}},
	\end{equation*}
	respectively.
	Observe that
	\[g(\lambda)=\phi(\bar F(t,\lambda))=\theta^{-1}\left(
	\left(\frac{1-\bar{\alpha} e^{-\lambda  t}}{\alpha  e^{-\lambda  t}}\right)^{\theta }-1\right).	\]
	Taking the second partial derivative of the above function with respect to $\lambda$, we get
	\[\frac{\partial^{2}g(\lambda)}{\partial\lambda^{2}}=
	\frac{t^2 e^{\lambda  t} \left(\theta  e^{\lambda  t}-\bar{\alpha} \right) \left(e^{\lambda  t}-\bar{\alpha} \right)^{\theta }}{\alpha^{\theta}\left(\bar{\alpha} -e^{\lambda  t}\right)^2}
	\overset{sign}{=}\theta  e^{\lambda  t}+\alpha-1\geq 0
	\]
	 when $\alpha>1$ or $\theta>1$ and therefore, $\phi(\bar F(t,\lambda))$  is convex in that case.

	On the other hand,
	as we show in Example \ref{ex_logconcave},
	$\widehat C_{\psi _{1}}\leq(\geq) \widehat C_{\psi _{2}}$ if $\theta_{1}<(>)\theta_{2}$.
	Let us consider $\theta_{1}=5$, $\theta_{2}=1.2$ and $\alpha=0.4$,
	so that  $\phi(\bar F(t,\lambda))$  is convex but $\phi _{2}\circ \psi _{1}$ is not
	superadditive (i.e., $\widehat C_{\psi _{1}}\geq \widehat C_{\psi _{2}}$).
	Finally, we take 
	${\bm\lambda}=(2, 4)$ and ${\bm\lambda}^{\ast }=(3, 3)$.
	It is then easy to observe that
	${\bm\lambda} \overset{m}{\geq}{\bm\lambda }^{\ast }$.
	As it is seen in Figure \ref{Fig_01}, the reliability functions cross each other
	and this means that $X_{1:2}$ and $X^{*}_{1:2}$ are not ordered
	since $\phi _{2}\circ \psi _{1}$ is not
	superadditive.
\end{example}

\begin{figure}[H]
	\centering
	\includegraphics[width=6 cm]{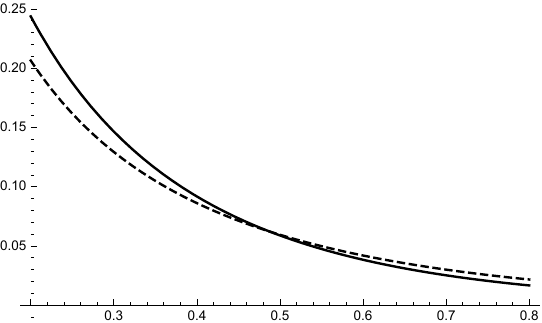}
	\caption{Plots of the reliability functions for  $X_{1:2}$ and $X^{*}_{1:2}$ consider in Example \protect\ref{ex_01}.}
	\label{Fig_01}
\end{figure}

\begin{proposition} \label{prop_04}
	Let $X_{1},\ldots ,X_{n}$ and $Y_{1},\ldots ,Y_{n}$ be two
	sets of nonnegative random variables with 
	$X_{i}\sim F\left( t,\lambda
	_{i}\right) $ and 
	$Y_{i}\sim G\left( t,\lambda _{i}^{\ast }\right) $ for $%
	i=1,\ldots ,n$, with associated Archimedean survival copulas $\widehat C_{\psi _{1}}$ and $%
	\widehat C_{\psi _{2}}$, respectively. 
	Suppose that $\widehat C_{\psi _{1}}\leq \widehat C_{\psi _{2}}$ and $F\geq _{ST}G$.
	If $F\left( t,\lambda\right)$ is decreasing (increasing) in $\lambda$, $\bar F$ log-concave in $\lambda$ and  
	$\psi _{1}$ log-convex, then 	
	${\bm\lambda} 
	\overset{w}{\geq }{\bm\lambda }^{\ast }\left( {\bm\lambda \geq }_{w}{%
		\bm\lambda }^{\ast }\right) $ implies $X_{1:n}\leq _{ST}Y_{1:n}$.
\end{proposition}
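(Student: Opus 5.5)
The plan is to mirror the proof of Proposition \ref{prop_03}, working with reliability functions instead of distribution functions. By \eqref{eq_min}, the reliability function of $X_{1:n}$ is
\[
\bar F_{X_{1:n}}(t)=\psi_1\Big(\sum_{i=1}^n\phi_1\big(\bar F(t,\lambda_i)\big)\Big)=:\varphi(\bm\lambda).
\]
As in the earlier proofs, I introduce $X_1^*,\ldots,X_n^*$ with $X_i^*\sim F(t,\lambda_i^*)$ and the same survival copula $\widehat C_{\psi_1}$. I then prove the chain
\[
\bar F_{X_{1:n}}\le \bar F_{X^*_{1:n}}\le \bar F_{Y^*_{1:n}}\le \bar F_{Y_{1:n}},
\]
where $Y_i^*\sim F(t,\lambda_i^*)$ carries the copula $\widehat C_{\psi_2}$.

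\textbf{Step 1: the sign of the Schur condition.} Differentiate $\varphi$ with respect to $\lambda_i$, using $\phi_1'=1/\psi_1'(\phi_1)$:
\[
\frac{\partial\varphi}{\partial\lambda_i}=\psi_1'\Big(\sum_k\phi_1(\bar F(t,\lambda_k))\Big)\,\frac{\partial_{\lambda_i}\bar F(t,\lambda_i)}{\psi_1'\big(\phi_1(\bar F(t,\lambda_i))\big)}\overset{sign}{=}\partial_{\lambda_i}\bar F(t,\lambda_i).
\]
So $\varphi$ is increasing (decreasing) in each $\lambda_i$ when $F$ is decreasing (increasing). Next, since $\psi_1'\le0$ and $\psi_1(\phi_1(\bar F))=\bar F$, I obtain
\[
(\lambda_i-\lambda_j)\Big(\frac{\partial\varphi}{\partial\lambda_i}-\frac{\partial\varphi}{\partial\lambda_j}\Big)\overset{sign}{=}-(\lambda_i-\lambda_j)\big(\ell(\lambda_i)-\ell(\lambda_j)\big),
\]
where $\ell=\ell_1\ell_2$, with $\ell_1(\lambda)=\partial_\lambda\log\bar F(t,\lambda)$ and $\ell_2(\lambda)=(\psi_1/\psi_1')\big(\phi_1(\bar F(t,\lambda))\big)$.

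\textbf{Step 2: $\ell$ is increasing (the main obstacle).} I expect the sign bookkeeping here to be the delicate part. Take the case where $F$ is decreasing in $\lambda$, so $\bar F$ is increasing.
\begin{itemize}
\item Log-concavity of $\bar F$ makes $\ell_1$ decreasing, and $\ell_1\ge0$ because $\bar F$ is increasing.
\item Log-convexity of $\psi_1$ means $\psi_1/\psi_1'$ is decreasing and $\le0$. Since $\phi_1$ is decreasing, $\phi_1(\bar F(t,\lambda))$ is decreasing in $\lambda$, so $\ell_2$ is increasing and nonpositive.
\item Writing $\ell=-\ell_1|\ell_2|$, the factor $|\ell_2|$ is nonnegative and decreasing, so $\ell_1|\ell_2|$ is a product of nonnegative decreasing functions. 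Hence $\ell$ is increasing.
\end{itemize}
The case where $F$ is increasing is handled by reversing the monotonicities. There $\ell_1\le0$ is still decreasing, and $\ell_2\le0$ becomes decreasing. I will need to check carefully that the product is still increasing, or that the orientation of $\varphi$ flips consistently, as in the parenthetical version of Proposition \ref{prop_03}.

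\textbf{Step 3: comparing $X_{1:n}$ with $X^*_{1:n}$.} Since $\ell$ is increasing, Theorem 3.A.4 in \cite{Marshall:2011} shows that $\varphi$ is Schur-concave. Combined with its monotonicity, Theorem 3.A.8 in \cite{Marshall:2011} (used exactly as in Proposition \ref{prop_03}) gives $\varphi(\bm\lambda)\le\varphi(\bm\lambda^*)$ under ${\bm\lambda}\overset{w}{\geq}{\bm\lambda}^*$ (respectively ${\bm\lambda}\geq_w{\bm\lambda}^*$). That is, $X_{1:n}\le_{ST}X^*_{1:n}$.

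\textbf{Step 4: the remaining two links.} $\widehat C_{\psi_1}\le\widehat C_{\psi_2}$ gives
\[
\bar F_{X^*_{1:n}}=\widehat C_{\psi_1}\big(\bar F(t,\lambda_1^*),\ldots,\bar F(t,\lambda_n^*)\big)\le\widehat C_{\psi_2}\big(\bar F(t,\lambda_1^*),\ldots,\bar F(t,\lambda_n^*)\big)=\bar F_{Y^*_{1:n}}.
\]
Finally, $F\ge_{ST}G$ means $\bar F\le\bar G$. Because the copula $\widehat C_{\psi_2}$ is increasing (Proposition 2.3(i) in \cite{NASS16}), this yields $\bar F_{Y^*_{1:n}}\le\bar F_{Y_{1:n}}$. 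Chaining Steps 3 and 4 gives $X_{1:n}\le_{ST}Y_{1:n}$.
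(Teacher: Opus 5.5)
Your proposal is correct and is essentially the paper's proof, written for the reliability function $\bar F_{X_{1:n}}$ rather than the distribution function $1-\bar F_{X_{1:n}}$. Your $\ell=\ell_1\ell_2$ is exactly the paper's $-\kappa$, so ``$\ell$ increasing, $\bar F_{X_{1:n}}$ Schur-concave'' is the paper's ``$\kappa$ decreasing, $F_{X_{1:n}}$ Schur-convex''. The increasing-$F$ case you left open does go through: there $\ell_1,\ell_2\le 0$ are both decreasing, so $\ell=|\ell_1|\,|\ell_2|$ is a product of nonnegative increasing functions.

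One caution on the last link. In the paper's convention $F\geq_{ST}G$ means $F\le G$, i.e.\ $\bar F\ge\bar G$ (see the proof of Theorem \ref{th01}), not $\bar F\le\bar G$ as you wrote. Your final step actually uses $\bar F\le\bar G$, i.e.\ $F\le_{ST}G$. That is the hypothesis the paper really needs and applies, both in Proposition \ref{prop_02}, to which its proof defers, and in the discussion following this proposition. So the ``$F\geq_{ST}G$'' in the statement is a misprint, and your argument proves the intended result.
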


\begin{proof}
	The distribution function of $X_{1:n}$ is the same than in \eqref{eq_min}.
	Analogously, the distribution function of $X^{*}_{1:n}$ can be defined
	by changing $\lambda$ by $\lambda^{*}$.
	Firstly, we need to prove that $X_{1:n}\leq _{ST}X_{1:n}^{\ast }$ if ${%
		\bm\lambda} \overset{w}{\geq }{\bm\lambda }^{\ast }\left( {\bm\lambda \geq }_{w}%
	{\bm\lambda }^{\ast }\right) $. 
		Let us denote $\varphi \left( \lambda
	_{1},\ldots ,\lambda _{n}\right) =1-\psi _{1}\left( \sum_{i=1}^{n}\phi
	_{1}\left( 1-F\left( t,\lambda _{i}\right) \right) \right) $. Taking the
	derivative of $\varphi \left( {\bm\lambda }\right) $ with respect to $%
	\lambda _{i}$, we get%
	\begin{eqnarray*}
		\frac{\partial \varphi \left( {\bm\lambda }\right) }{\partial \lambda
			_{i}} &=&\psi _{1}^{\prime }\left( \sum_{i=1}^{n}\phi _{1}\left( 1-F\left(
		t,\lambda _{i}\right) \right) \right) \phi _{1}^{\prime }\left( 1-F\left(
		t,\lambda _{i}\right) \right) \frac{\partial F\left( t,\lambda _{i}\right) }{%
			\partial \lambda _{i}} \\
		&=&\psi _{1}^{\prime }\left( \sum_{i=1}^{n}\phi _{1}\left( 1-F\left(
		t,\lambda _{i}\right) \right) \right) \frac{1}{\psi _{1}^{\prime }\left(
			\phi _{1}\left( 1-F\left( t,\lambda _{i}\right) \right) \right) }\frac{%
			\partial F\left( t,\lambda _{i}\right) }{\partial \lambda _{i}}\\
		&\overset{sign}{=}&
		\frac{%
			\partial F\left( t,\lambda _{i}\right) }{\partial \lambda _{i}}.	
	\end{eqnarray*}%
	For any pair 
	$1\leq i\leq j\leq n$, we have%
		\small{	\begin{eqnarray}\label{eq_fi2}\nonumber
			\left( \lambda _{i}-\lambda _{j}\right) \left( \frac{\partial \varphi
				\left( {\bm\lambda }\right) }{\partial \lambda _{i}}-\frac{\partial
				\varphi \left( {\bm\lambda }\right) }{\partial \lambda _{j}}\right)
			&=&\left( \lambda _{i}-\lambda _{j}\right) 
			\psi _{1}^{\prime }\left(
			\sum_{i=1}^{n}\phi _{1}\left( 1-F\left( t,\lambda _{i}\right) \right)
			\right) \\\nonumber
			&&\left( \frac{1}{\psi _{1}^{\prime }\left( \phi _{1}\left( 1-F\left(
				t,\lambda _{i}\right) \right) \right) }\frac{\partial F\left( t,\lambda
				_{i}\right) }{\partial \lambda _{i}}-\frac{1}{\psi _{1}^{\prime }\left( \phi
				_{1}\left( 1-F\left( t,\lambda _{j}\right) \right) \right) }\frac{\partial
				F\left( t,\lambda _{j}\right) }{\partial \lambda _{j}}\right) \\
			&\overset{sign}{=}&
			-\left( \lambda _{i}-\lambda _{j}\right) \left( \frac{%
				\partial _{\lambda _{i}}F\left( t,\lambda _{i}\right) }{\psi _{1}^{\prime
				}\left( \phi _{1}\left( 1-F\left( t,\lambda _{i}\right) \right) \right) }-%
			\frac{\partial _{\lambda _{j}}F\left( t,\lambda _{j}\right) }{\psi
				_{1}^{\prime }\left( \phi _{1}\left( 1-F\left( t,\lambda _{j}\right) \right)
				\right) }\right)
			\end{eqnarray}}
	since, $\psi _{1}^{\prime }\leq 0$.
	%
	Now, let us denote
	\begin{eqnarray}\label{eq_kappa}
	\kappa(\lambda)=\frac{\partial_{\lambda}F\left( t,\lambda\right)}{1-F\left( t,\lambda\right)}
	\cdot
	\frac{\psi
		_{1}\left( \phi _{1}\left(1- F\left( t,\lambda\right) \right)
		\right)}{\psi
		_{1}^{\prime }\left( \phi _{1}\left( 1-F\left( t,\lambda\right) \right)
		\right)}
	=\kappa_1(\lambda)\cdot \kappa_2(\lambda),
	\end{eqnarray}
	where
	\[\kappa_1(\lambda)=-\frac{\partial\log  \bar{F}\left( t,\lambda\right)}{\partial \lambda} 
	\quad\textup{and}\quad
	\kappa_2(\lambda)=\frac{\psi_{1}}{\psi
		_{1}^{\prime }}
	\Big( \phi _{1}\left(1- F\left( t,\lambda\right) \right)
	\Big).
	\]
	Then, \eqref{eq_fi2} can be rewritten as
	\[\left( \lambda _{i}-\lambda _{j}\right) \left( \frac{\partial \varphi
		\left( {\bm\lambda }\right) }{\partial \lambda _{i}}-\frac{\partial
		\varphi \left( {\bm\lambda }\right) }{\partial \lambda _{j}}\right)
	\overset{sign}{=}
	-\left( \lambda _{i}-\lambda _{j}\right)
	\Big(\kappa(\lambda_{i})-\kappa(\lambda_{j})	\Big),	\]
	since $\psi
	_{1}\left( \phi _{1}\left(1- F\left( t,\lambda\right) \right)
	\right)= \bar{F}\left( t,\lambda\right)$.
	Therefore, we need to prove that $\kappa$ is decreasing.
	
	Note that the function $\kappa_2$ is negative-valued since $\psi
	_{1}^{\prime }\leq 0$.	
	In addition, observe that if $\psi _{1}/\psi _{1}^{\prime }$
	is decreasing  in $t$, which is equivalent to $\psi _{1}$ log-convex, 
	and $F$ is decreasing (increasing), then $\kappa_2$ 
	is increasing (decreasing).
	On the other hand, it is evident that $\kappa_1$ is increasing since $\bar F$ is log-concave in $\lambda$ and it is negative-valued (positive-valued) if 
	$F$ is decreasing (increasing) in $\lambda$.
	Therefore,
	from \eqref{eq_kappa}, we have that $\kappa$
	is decreasing when $F$ is decreasing (increasing) in $\lambda$,  $\bar F$ log-concave in $\lambda$, and $\psi _{1}$ log-convex.
	Thus, from Theorem 3.A.4 in \cite{Marshall:2011}, we know that
	the function $\varphi \left( {\bm\lambda }\right) $ is Schur-convex
	with respect to ${\bm\lambda }$. 
	In other words, $X_{1:n}\leq
	_{ST}X_{1:n}^{\ast }$ if ${\bm\lambda} \overset{w}{\geq }{\bm\lambda }^{\ast
	}\left( {\bm\lambda \geq }_{w}{\bm\lambda }^{\ast }\right) $.
	The rest
	of the proof is the same than in Proposition \ref{prop_02}, hence omitted.
\end{proof}

\begin{remark}\label{rem_04}
		It is worth noting that,
	from \eqref{eq_kappa}, we have that the function $\kappa$
	is increasing when $F$ is increasing (decreasing) and $\bar F$ log-convex in $\lambda$ and $\psi _{1}$ log-concave.
	Thus,
	the function $\varphi \left( {\bm\lambda }\right) $ is Schur-concave
	with respect to ${\bm\lambda }$. 
	Then, if $\widehat C_{\psi _{1}}\geq \widehat C_{\psi _{2}}$ and $F\leq _{ST}G$,	
	the condition
	${\bm\lambda} \overset{w}{\geq }{\bm\lambda }^{\ast
	}\left( {\bm\lambda \geq }_{w}{\bm\lambda }^{\ast }\right) $
	implies
	$X_{1:n}\geq
	_{ST}Y_{1:n}$.
\end{remark}

{From Lemma \ref{lema01}, we know that the MPHR model for $\alpha>1$,
	the SPRH model for $\lambda>0$ when $\tilde{r}$ increasing,
	the SPH model for $\lambda>0$ when ${r}$ increasing and
	the Pareto IV distribution for $\alpha>0$
	verify the conditions on $F$ in Proposition \ref{prop_04}. 
	Whereas, the MPHR model for $0<\alpha\leq 1$,
		the SPH model for $\lambda>0$ when ${r}$ decreasing
	and
	the Pareto IV distribution for $\lambda>0$ when $0<\alpha\leq 1$
		verify the conditions on $F$ in Remark \ref{rem_04}. 
	Therefore, we can obtain comparisons between series systems following
	these type of distributions.}

Now, we assume that the components have extended exponential distributions which belong to the
MPHR model 
as
defined in \eqref{MPHR}. 
From Lemma \ref{lema01}(iii), we know that $\bar F$ is log-convex (log-concave)
in $\lambda$ if $0<\alpha\leq 1$  ($\alpha >1$).
Then, from Proposition \ref{prop_04}, we know that, if $\alpha >1$, then
${\bm\lambda} \overset{w}{\geq }{\bm\lambda }^{\ast
}\left( {\bm\lambda \geq }_{w}{\bm\lambda }^{\ast }\right) $ implies $X_{1:n}\leq
_{ST}Y_{1:n}$ when $F\leq _{ST}G$ and $\psi _{1}$ log-convex. 
And, from the above Remark \ref{rem_04},
if $0<\alpha\leq 1$, then
${\bm\lambda} \overset{w}{\geq }{\bm\lambda }^{\ast
}\left( {\bm\lambda \geq }_{w}{\bm\lambda }^{\ast }\right)$ implies $X_{1:n}\geq
_{ST}Y_{1:n}$ when $F\geq _{ST}G$ and $\psi _{1}$ log-concave.
It is worth mentioning that  Theorem 2 in \cite{Barmalzan-2020} is
therefore not correct.

\bigskip


\begin{example}\label{ex_log}
	Let us consider two series systems $X_{1:2}$ and $X^{*}_{1:2}$ with $F\overset{ST}{=}G$, 	for simplicity. 
	We assume that $F$ is defined as in \eqref{MPHR}
	where $H$ is an exponential distribution with hazard rate $1$
	and 
	$\widehat{C}_{\psi _{i}}$ is a Gumbel copula defined as follows
	\[\widehat{C}_{\psi _{i}}(u,v)=u v 
	\exp \left(-\theta_i \log u \log v    \right),
	\quad \textup{for }
	u,v\in[0,1],
	\textup{ and }	\theta_i\in (0,1],	\]
	with generator
	$\psi _{i}(t)=\exp(\frac{1-e^{t}}{\theta_i})$ 
	for $i=1,2$ (see (4.2.9)  in Table 4.1 in \cite{N06}).
	Firstly, observe that $\log \psi _{1}(t)=(1-e^{t})/\theta_1 $ is concave
	since its second derivative with respect to $t$ is
	$-\theta_{1}^{-1}e^{t}<0$, and therefore $\psi _{1}$ is log-concave.
	Secondly, it is easy to check that 
	$\widehat{C}_{\psi _{1}}\leq (\geq) \widehat{C}_{\psi _{2}}$
	if $\theta_1> (<)\theta_2$, i.e., the Gumbel copula is negative ordered.
	Thirdly, as we show before, from Lemma \ref{lema01}(iii), we know that $\bar F$ is log-convex (log-concave)
	in $\lambda$ if $0<\alpha\leq 1$  ($\alpha >1$), and
	$F$ is increasing in $\lambda$ for any $\alpha>0$.
	Therefore, if $\theta_1<\theta_2$ and $0<\alpha\leq 1$, all the conditions in Remark \ref{rem_04} are satisfied. Set $\theta_1=0.1$ and $\theta_2=0.9$.
		Finally, we take 
	${\bm\lambda}=(1.1, 3)$ and ${\bm\lambda}^{\ast }=(1.9, 2.4)$.
	It is then easy to obtain that
	${\bm\lambda}{\overset{w}{\geq }}{\bm\lambda }^{\ast }$.
	Figure \ref{Fig_log} shows the distribution functions of 
	$X_{1:2}$ (solid line) and $X_{1:2}^{*}$ (dashed line)
	for $\alpha = 0.5$ (left) and 
	for $\alpha = 20$ (right).	
	As it is seen in Figure \ref{Fig_log}(left), 
	$X_{1:2}\geq_{ST}X_{1:2}^{*}$, which coincides with the result of Remark \ref{rem_04}, since all the conditions  
	in that remark are satisfied.
	However, in Figure \ref{Fig_log}(right), the distribution functions cross each other
	and this means that $X_{1:2}$ and $X^{*}_{1:2}$ are not ordered
	since $\bar F$ is not log-convex.
\end{example}

\begin{figure}[H]
	\centering
	\includegraphics[width=6 cm]{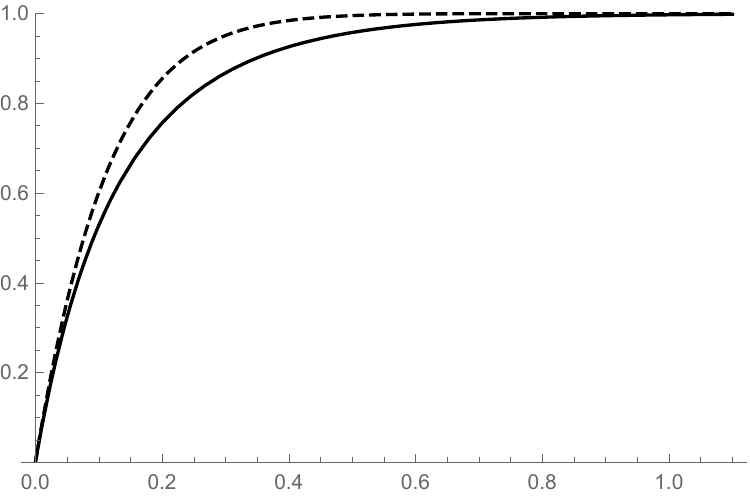}
	\includegraphics[width=6 cm]{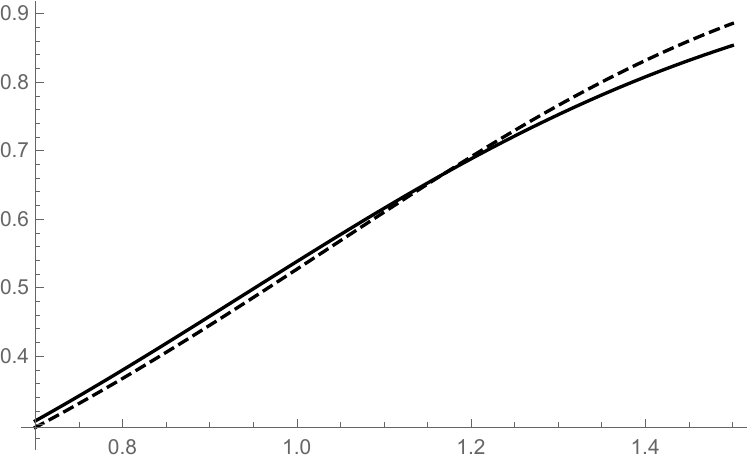}
	\caption{Plots of the distributions functions for  $X_{1:2}$ (solid line) and $X_{1:2}^{*}$ (dashed line)
		for $\alpha = 0.5$ (left) and 
	for $\alpha = 20$ (right) considered in Example \protect\ref{ex_log}.}
	\label{Fig_log}
\end{figure}


\section{Independent case for extreme OS}\label{sec_ind}

In this section, we study comparisons between two parallel or series systems formed by
heterogeneous independent components. 
When the samples are independent, it is well-known that the generator associated to an Archimedean copula is
$\psi(x)=e^{-x}$ for $x\geq 0$ and therefore $\phi(u)=-\log u$ for $u\in[0,1]$.

\begin{proposition}\label{prop_ind1} 
	Let $X_{1},\ldots ,X_{n}$ and $Y_{1},\ldots ,Y_{n}$ be two sets
	of independent nonnegative random variables with $X_{i}\sim F\left( t,\lambda
	_{i}\right) $ and $Y_{i}\sim G\left( t,\lambda _{i}^{\ast }\right) $
	for $i=1,\ldots ,n$.  Then, ${\bm\lambda} \overset{w}%
	{\geq }{\bm\lambda }^{\ast }\left( {\bm\lambda \geq }_{w}{\bm\lambda }%
	^{\ast }\right) $ implies
	\begin{itemize}
		\item[i)] $X_{1:n}\geq _{ST}Y_{1:n}$ 
		and $X_{n:n}\geq _{ST}Y_{n:n}$,
		if $F\left( t,\lambda\right) $
		is log-concave and increasing (decreasing) in $\lambda$,
		and $F\geq_{ST} G$.
		
		\item[ii)] $X_{1:n}\leq _{ST}Y_{1:n}$ 
		and $X_{n:n}\leq _{ST}Y_{n:n}$, 
		if $F\left( t,\lambda\right) $
		is log-convex and decreasing (increasing) in $\lambda$,
		and $F\leq_{ST} G$.
	\end{itemize}
\end{proposition}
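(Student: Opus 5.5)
The plan is to treat the independent case as the special case of Section~\ref{sec_arq} with generator $\psi_1=\psi_2=\psi$, $\psi(x)=e^{-x}$, so that $\phi(u)=-\log u$ and $C_{\psi_1}=C_{\psi_2}$ is the product copula. The condition $C_{\psi_1}\leq C_{\psi_2}$ (and its survival analogue) then holds trivially with equality. Since $\log\psi(x)=-x$ is linear, $\psi$ is both log-convex and log-concave, so every generator condition in Propositions~\ref{prop_03} and \ref{prop_04} and in Remarks~\ref{rem_03} and \ref{rem_04} is automatically satisfied. As in the earlier proofs, I split each comparison into two steps. The first is the majorization step from $\bm\lambda$ to $\bm\lambda^*$ with the same baseline $F$. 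The second is the step from $F(\lambda_i^*)$ to $G(\lambda_i^*)$, which follows from Proposition 2.3(i) in \cite{NASS16} and the assumption $F\geq_{ST}G$ (resp.\ $F\leq_{ST}G$).

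\emph{Parallel systems.} Here $F_{X_{n:n}}(t)=\exp\{\sum_i\log F(t,\lambda_i)\}$. For i), $F$ is log-concave and increasing (decreasing) in $\lambda$, so Proposition~\ref{prop_03} applies directly with $\psi_1=e^{-x}$. Equivalently, $\sum_i\log F(t,\lambda_i)$ is a symmetric, concave, increasing (decreasing) function of $\bm\lambda$. By Proposition 3.C.1.b and Theorem 3.A.8 in \cite{Marshall:2011} it is therefore smaller at $\bm\lambda$ than at $\bm\lambda^*$ whenever $\bm\lambda\overset{w}{\geq}\bm\lambda^*$ ($\bm\lambda\geq_w\bm\lambda^*$). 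This gives $X_{n:n}\geq_{ST}X^*_{n:n}$, and the $G$-step then gives $X_{n:n}\geq_{ST}Y_{n:n}$. Part ii) for parallel systems follows in the same way from Remark~\ref{rem_03}, with log-convexity reversing the Schur property.

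\emph{Series systems.} Here $F_{X_{1:n}}(t)=1-\prod_i\bar F(t,\lambda_i)$. Writing $\varphi(\bm\lambda)=1-\exp\{\sum_i\log\bar F(t,\lambda_i)\}$, a derivative computation as in the proof of Proposition~\ref{prop_04} gives
\[
\partial_{\lambda_i}\varphi=\prod_j\bar F(t,\lambda_j)\cdot\frac{\partial_\lambda F(t,\lambda_i)}{\bar F(t,\lambda_i)} .
\]
So Schur-concavity (needed for i), via Remark~\ref{rem_04}) is equivalent to $\partial_\lambda F/\bar F$ being decreasing, that is, to $\bar F$ being log-convex in $\lambda$. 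Likewise, Schur-convexity (needed for ii), via Proposition~\ref{prop_04}) is equivalent to $\bar F$ being log-concave. I would invoke Remark~\ref{rem_04} and Proposition~\ref{prop_04} respectively, with $\psi=e^{-x}$, and then add the $G$-step as before.

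\emph{Main obstacle.} The hypotheses of the statement concern log-concavity or log-convexity of $F$, not of $\bar F$. The hard step is therefore to pass from the hypothesis on $F$ to the required property of $\bar F$. I would first try writing
\[
\frac{\partial_\lambda F}{\bar F}=\frac{\partial_\lambda F}{F}\cdot\frac{F}{\bar F}
\]
and using the monotonicity of $F/\bar F$ in $\lambda$. In case i) with $F$ increasing, however, this expresses $\partial_\lambda F/\bar F$ as a decreasing factor times an increasing one, which does not settle its monotonicity. So the series half of i) (and, symmetrically, of ii)) may require the extra assumption that $\bar F$ is log-convex (resp.\ log-concave) in $\lambda$. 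Before finalizing, I would check this against a counterexample such as the exponential family $F(t,\lambda)=1-e^{-\lambda t}$. There $\bar F$ is log-linear, so Schur-linearity gives ordering only under equality of sums; this suggests that the series claims may need the stronger hypothesis to hold in general.
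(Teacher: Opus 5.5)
Your parallel-system arguments are fine and are essentially the paper's; it uses Proposition~\ref{prop_01} and Remark~\ref{rem_01} with $\phi(u)=-\log u$. Your series derivative and the resulting reduction are also correct: part i) needs $\bar F$ log-convex in $\lambda$, and part ii) needs $\bar F$ log-concave.

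\textbf{Gap in ii).} The property you need does follow from the hypothesis. A log-convex function is convex ($F=\exp(\log F)$; see also Proposition 21.A.6 in \cite{Marshall:2007}). Hence $\bar F=1-F$ is concave and, being positive, log-concave in $\lambda$, which is exactly your Schur condition. Equivalently, as the paper does, feed convexity of $F$ into Theorem~\ref{th02} with $D_1=D_2=1-\prod_{i}(1-u_i)$, which is Schur-convex. So the series half of ii) needs no extra assumption.

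\textbf{Part i).} Your doubt is justified: the series claim is false, and the paper's proof of it is wrong. When differentiating $\prod_k(1-F(t,\lambda_k))$, the paper writes the cofactor as $\prod_{k\neq i}F(t,\lambda_k)$. This replaces your (correct) ratio $\partial_\lambda F/\bar F$ by $\partial_\lambda F/F$.

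\emph{Counterexample.} Take the MPHR model with $\alpha=2$ and $\bar H(t)=e^{-t}$, i.e.\ $F(t,\lambda)=(1-e^{-\lambda t})/(1+e^{-\lambda t})$.
\begin{itemize}
\item $F$ is increasing and log-concave in $\lambda$, since $\partial_\lambda\log F=t/\sinh(\lambda t)$ is decreasing.
\item But $\bar F(t,\lambda)=2/(1+e^{\lambda t})$ is strictly log-concave, so $\prod_i\bar F(t,\lambda_i)$ is strictly Schur-concave.
\item Take $G=F$, $\bm\lambda=(2,4)$ and $\bm\lambda^*=(3,3)$, so that $\bm\lambda\overset{w}{\geq}\bm\lambda^*$.
\item At $t=1$: $\bar F(1,2)\bar F(1,4)\approx 0.00858<0.00900\approx\bar F(1,3)^2$.
\end{itemize}
So $X_{1:2}\geq_{ST}Y_{1:2}$ fails. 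In fact the reverse inequality holds strictly for every $t>0$. This is consistent with the paper's own comment on MPHR with $\alpha>1$ following Proposition~\ref{prop_04}.

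\textbf{Your exponential test is not a counterexample.} Under $\bm\lambda\overset{w}{\geq}\bm\lambda^*$ one has $\sum_i\lambda_i\leq\sum_i\lambda_i^*$. Since $\prod_i\bar F=e^{-t\sum_i\lambda_i}$ is decreasing in that sum, the conclusion does hold there. Weak majorization together with monotonicity gives the ordering without requiring equal sums.
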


\begin{proof}
	\begin{itemize}
		\item[i)] Firstly, we prove that $X_{1:n}\geq _{ST}X_{1:n}^{\ast }$ 
		where $X_{1:n}^{\ast }=\min(X_1^{*},\ldots,X_n^{*})$ with 
		$X_{i}^{*}\sim F\left( t,\lambda_{i}^{*}\right)$
		for $i=1,\ldots,n$.
		Observe that the distribution function of $X_{1:n}$ is $F_{X_{1:n}}(t)=1-\prod%
		\limits_{i=1}^{n}\left( 1-F\left( t,\lambda _{i}\right) \right) $. Thus, $%
		F_{X_{1:n}}(t)\leq F_{X_{1:n}^{\ast }}(t)$ holds if the function $\varphi
		\left( \lambda _{1},\ldots ,\lambda _{n}\right)
		=\prod\limits_{i=1}^{n}\left( 1-F\left( t,\lambda _{i}\right) \right) $ is
		Schur-convex in $\left( \lambda _{1},\ldots ,\lambda _{n}\right) $. Taking
		the derivative of $\varphi \left( \mathbf{\lambda }\right) $ with respect to 
		$\lambda _{i}$, we get%
		\[
		\frac{\partial \varphi \left( \mathbf{\lambda }\right) }{\partial \lambda
			_{i}}=-\frac{\partial F\left( t,\lambda _{i}\right) }{\partial \lambda _{i}}%
		\prod\limits_{k\neq i}F\left( t,\lambda _{k}\right) \leq 0\left( \geq
		0\right) ,
		\]%
		where the inequality is due to the increasing (decreasing) property of $%
		F\left( t,\lambda _{i}\right) $. For any pair $1\leq i\leq j\leq n$, we have%
		\begin{eqnarray*}
			\left( \lambda _{i}-\lambda _{j}\right) \left( \frac{\partial \varphi
				\left( \mathbf{\lambda }\right) }{\partial \lambda _{i}}-\frac{\partial
				\varphi \left( \mathbf{\lambda }\right) }{\partial \lambda _{j}}\right) 
			&\overset{sign}{=}&\left( \lambda _{i}-\lambda _{j}\right) \left( -\frac{%
				\partial F\left( t,\lambda _{i}\right) }{\partial \lambda _{i}}F\left(
			t,\lambda _{j}\right) +\frac{\partial F\left( t,\lambda _{j}\right) }{%
				\partial \lambda _{j}}F\left( t,\lambda _{i}\right) \right)  \\
			&\overset{sign}{=}&-\left( \lambda _{i}-\lambda _{j}\right) \left( \frac{%
				\partial _{\lambda _{i}}F\left( t,\lambda _{i}\right) }{F\left( t,\lambda
				_{i}\right) }-\frac{\partial _{\lambda _{j}}F\left( t,\lambda _{j}\right) }{%
				F\left( t,\lambda _{j}\right) }\right)  
			\geq 0,
		\end{eqnarray*}%
		since, by assumption, $\partial _{\lambda _{i}}F\left( t,\lambda _{i}\right)
		/F\left( t,\lambda _{i}\right) $ is decreasing in $\lambda _{i}$, i.e., $F\left( t,\lambda _{i}\right) $ is log-concave in $\lambda _{i}$.  Therefore, from Theorem 3.A.4 in \cite{Marshall:2011}, we know that
		the function $\varphi \left( 
		\mathbf{\lambda }\right) $ is decreasing (increasing) and Schur-convex in $%
		\mathbf{\lambda }$, that is,  $X_{1:n}\geq _{ST}X_{1:n}^{\ast }$.
		
		On the other hand, the distribution
		function of $Y_{1:n}$ is
		\begin{equation*}
		F_{Y_{1:n}}(t)=1-\prod_{i=1}^{n}\left( 1-G\left( t,\lambda _{i}^{*}\right) \right).
		\end{equation*}%
		Now, it is clear that $F_{X^{*}_{1:n}}\leq F_{Y_{1:n}}$ if $F(t)\leq G(t)$ for $t>0$. In other words, $X^{*}_{1:n}\geq_{ST}Y_{1:n}$ when
		$F\geq_{ST} G$.
		This complete the proof of the first part.
		
		Next, to prove that $X_{n:n}\geq _{ST}Y_{n:n}$, observe that, 
		for independent samples,
		the generator associated to an Archimedean copula is
		$\psi(x)=e^{-x}$ and therefore $\phi(x)=-\log x$.
		Then, $\phi(F(t,\lambda))=-\log F(t,\lambda)$ is convex 
		with respect to $\lambda$ since
		$F(t,\lambda)$ is log-concave, and therefore,
		from Proposition \ref{prop_01}, we have the desired result.
		\item[ii)] In order to prove that $X_{1:n}\leq _{ST}Y_{1:n}$,
		we know,
		from Proposition 21.A.6 in \cite{Marshall:2007}, that if $%
		F\left( t,\lambda \right) $ is log-convex, then $F\left(
		t,\lambda \right) $ is convex. 
		Then, the result is obtained from Theorem \ref{th02} where $D_1 = D_2$. 
		
		On the other hand, $\phi(F(t,\lambda))=-\log F(t,\lambda)$ is concave 
		with respect to $\lambda$ since
		$F(t,\lambda)$ is log-convex, and therefore,
		from Remark \ref{rem_01}, we have $X_{n:n}\leq _{ST}Y_{n:n}$.
	\end{itemize}
\end{proof}

\bigskip

The following two examples demonstrate the theoretical result of
Proposition \ref{prop_ind1}.

\begin{example}\label{ex_indp1}
	Let $X_1,\ldots,X_4$ and $Y_1,\ldots,Y_4$ be two sets of independent nonegative random variables with  $X_{i}\sim F\left( t,\lambda
	_{i}\right) $ and $Y_{i}\sim G\left( t,\lambda _{i}^{\ast }\right) $
	for $i=1,\ldots ,4$. 
	{Let us consider that $F$ belongs to the MPRHR model as defined in \eqref{MPRHR}
			where $H_F$ is an exponential distribution with hazard rate $1$, whereas 
			$G$ belongs to the proportional reversed hazard rate (PRHR) model
			where $H_G$ is an exponential distribution with hazard rate $2$.}
			From Lemma \ref{lema01}(ii), we know that $F$ is decreasing in $\lambda$
		for any $\alpha>0$ and it is log-concave if $\alpha>1$.
		{It is easy to check that $F\geq_{ST}G$ when $\alpha=2$ in the MPRHR model, however, $F$ and $G$ cross each other when 
			$\alpha=15$ in the MPRHR model.}
	We take $\bm\lambda=(1.7, 2, 3.2, 4.5)$ and
	$\bm\lambda^{*}=(1.3, 2.7, 2.8, 3)$. It is evident that 
	 $\bm\lambda\geq_{w}\bm\lambda^{*}$.
	 Therefore, {for $\alpha=2$}, all the conditions in Proposition \ref{prop_ind1}(i) are satisfied.
	Figure \ref{Fig_indp1}(left) displays the distribution functions of 
	$X_{1:4}$ (solid line) and $Y_{1:4}$ (dashed line),
	and Figure \ref{Fig_indp1}(center) shows the distribution functions of 
	$X_{4:4}$ (solid line) and $Y_{4:4}$ (dashed line).
	As it can be seen, both plots are according to Proposition \ref{prop_ind1}(i), that is,
	$X_{1:4}\geq _{ST}Y_{1:4}$ 
	and $X_{4:4}\geq _{ST}Y_{4:4}$.
	{ However, for $\alpha=15$, $F$ and $G$ cross each other and from Figure \ref{Fig_indp1}(right),
		it can be seen that $X_{4:4}$ and $Y_{4:4}$ are not ordered, i.e., $F\geq_{ST}G$ is a necessary condition.}
\end{example}

\begin{figure}[H]
	\centering
	\includegraphics[width=5.5 cm]{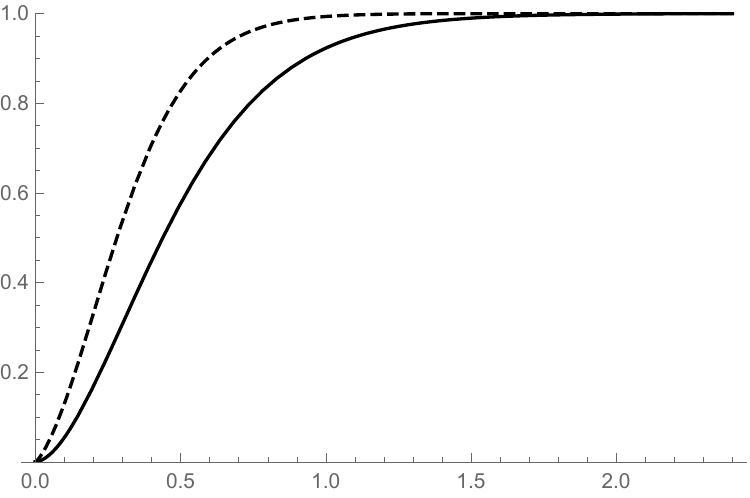}
	\includegraphics[width=5.5 cm]{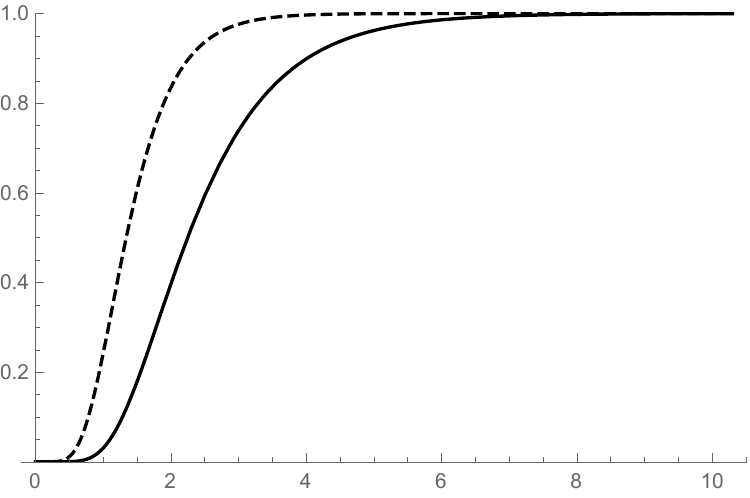}
	\includegraphics[width=5.5 cm]{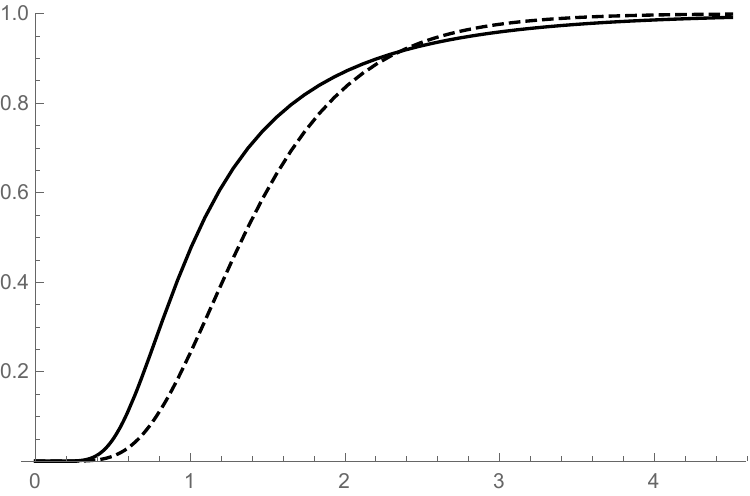}
	\caption{Plots of the distributions functions for  
		series (left) and parallel (center) systems when $F\geq_{ST}G$ and 	parallel  systems when $F\ngeq_{ST}G$ (right),
		considered in Example \protect\ref{ex_indp1}.}
	\label{Fig_indp1}
\end{figure}

\begin{example}\label{ex_indp2}
	Now, we consider the same random variables than in Example \ref{ex_indp1}
	but, in this case, we assume that the baseline distribution function $H$ is an exponential distribution with hazard rate $6$ for $F$, and 
	with hazard rate $3$ for $G$. 
	In addition, we consider $0<\alpha\leq 1$, then $F$ is log-convex from Lemma \ref{lema01}(ii).
		{It is easy to check that $F\leq_{ST}G$ when $\alpha=0.5$ in the MPRHR model, however, $F$ and $G$ cross each other when 
		$\alpha=0.02$ in the MPRHR model.}
	Set $\bm\lambda=(0.5, 1.2, 2.2, 3)$ and
	$\bm\lambda^{*}=(0.9, 1.5, 1.8, 3.2)$. It is evident that 
	$\bm\lambda\overset{w}{\geq }\bm\lambda^{*}$.
	Therefore, {for $\alpha=0.5$}, all the conditions in Proposition \ref{prop_ind1}(ii) are satisfied.
	As before, Figure \ref{Fig_indp2}(left) shows the distribution functions of 
	$X_{1:4}$ (solid line) and $Y_{1:4}$ (dashed line),
	and Figure \ref{Fig_indp2}(center) displays the distribution functions of 
	$X_{4:4}$ (solid line) and $Y_{4:4}$ (dashed line).
	As it can be seen, both plots are according to Proposition \ref{prop_ind1}(ii), that is,
	$X_{1:4}\leq _{ST}Y_{1:4}$ 
	and $X_{4:4}\leq _{ST}Y_{4:4}$. 
		{ However, for $\alpha=0.02$, $F$ and $G$ cross each other and from Figure \ref{Fig_indp2}(right),
		it can be seen that $X_{4:4}$ and $Y_{4:4}$ are not ordered, i.e., $F\leq_{ST}G$ is a necessary condition.}
\end{example}

\begin{figure}[H]
\centering
\includegraphics[width=5.5 cm]{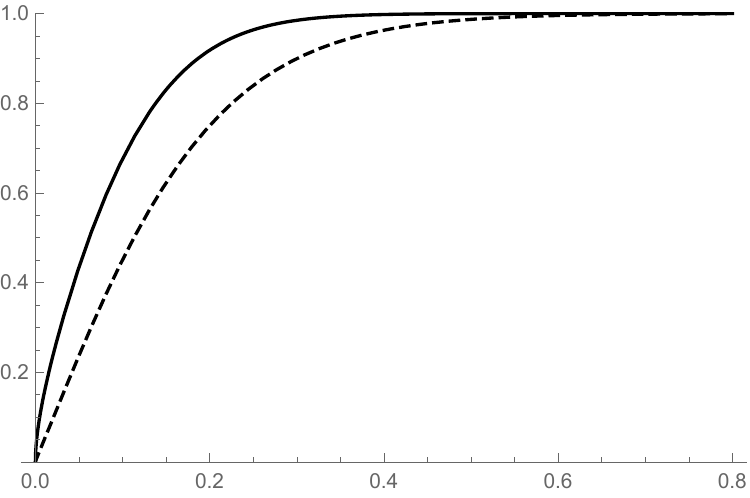}
\includegraphics[width=5.5 cm]{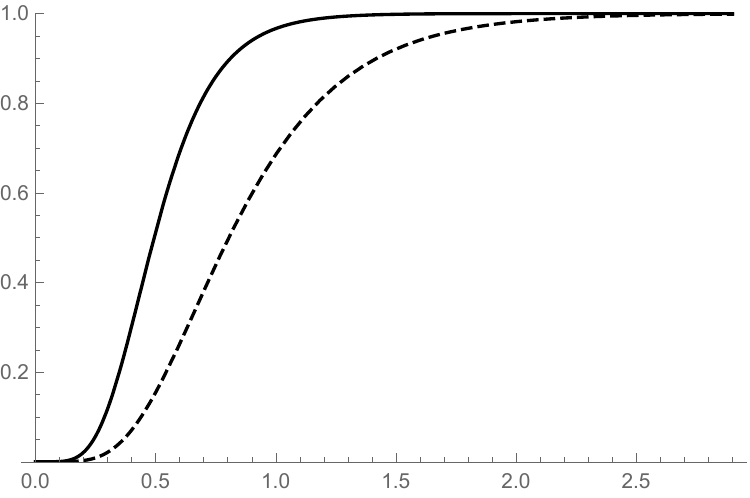}
\includegraphics[width=5.5 cm]{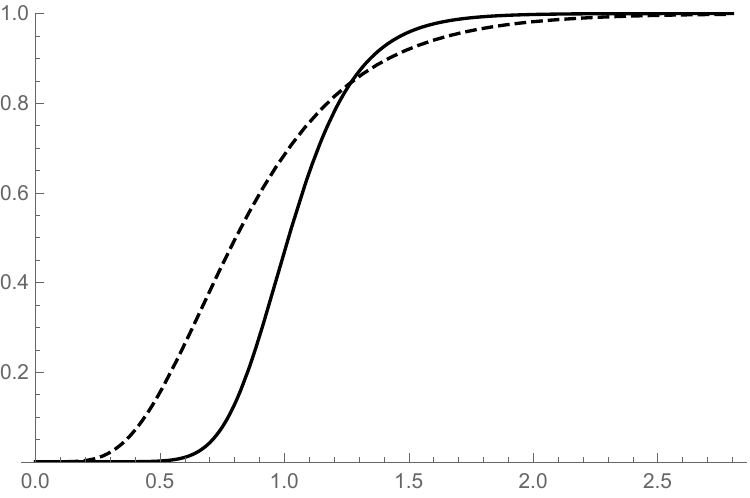}
\caption{Plots of the distributions functions for  
	series (left) and parallel (center) systems when $F\leq_{ST}G$ and 	parallel  systems when $F\nleq_{ST}G$ (right),
	considered in Example \protect\ref{ex_indp2}.}
\label{Fig_indp2}
\end{figure}

In the following result we compare series and parallel systems when the reliability function is log-convex or log-concave with respect to a parameter.

\begin{proposition}\label{prop_ind2}
	Let $X_{1},\ldots ,X_{n}$ and $Y_{1},\ldots ,Y_{n}$ be two sets
	of independent nonnegative random variables with $X_{i}\sim F\left( t,\lambda
	_{i}\right) $ and $Y_{i}\sim G\left( t,\lambda _{i}^{\ast }\right) $
	for $i=1,\ldots ,n$.  Then, ${\bm\lambda} \overset{w}%
	{\geq }{\bm\lambda }^{\ast }\left( {\bm\lambda \geq }_{w}{\bm\lambda }%
	^{\ast }\right) $ implies
	\begin{itemize}
		\item[i)] $X_{1:n}\leq _{ST}Y_{1:n}$ 
		and $X_{n:n}\leq _{ST}Y_{n:n}$,
		if $\bar F\left( t,\lambda\right) $
		is log-concave and increasing (decreasing) in $\lambda$,
		and $F\leq_{ST} G$.
		
		\item[ii)] $X_{1:n}\geq _{ST}Y_{1:n}$ 
		and $X_{n:n}\geq _{ST}Y_{n:n}$, 
		if $\bar F\left( t,\lambda\right) $
		is log-convex and decreasing (increasing) in $\lambda$,
		and $F\geq_{ST} G$.
	\end{itemize}
\end{proposition}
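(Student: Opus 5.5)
We prove the series-system assertions directly from the product form of the reliability function. We then treat parallel systems through the Schur-criterion argument used in Proposition \ref{prop_ind1}; that step has a gap I cannot close under the stated hypotheses. Throughout we work with part i); part ii) is symmetric, with every convexity statement and every inequality reversed. As in the proofs of Propositions \ref{prop_02} and \ref{prop_ind1}, we introduce the auxiliary samples $X_i^{*}\sim F(t,\lambda_i^{*})$. We then split each claim into two comparisons:
\begin{itemize}
\item[(a)] $X_{1:n}\leq_{ST}X^{*}_{1:n}$ (respectively $X_{n:n}\leq_{ST}X^{*}_{n:n}$), obtained from the majorization condition;
\item[(b)] $X^{*}_{1:n}\leq_{ST}Y_{1:n}$ (respectively $X^{*}_{n:n}\leq_{ST}Y_{n:n}$), obtained from $F\leq_{ST}G$.
\end{itemize}
Step (b) is immediate in the independent case. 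Both $\prod_i \bar F(t,\lambda_i^{*})\le\prod_i\bar G(t,\lambda_i^{*})$ and $\prod_i F(t,\lambda_i^{*})\ge \prod_i G(t,\lambda_i^{*})$ follow termwise from $\bar F\le\bar G$. Alternatively, Proposition 2.3(i) in \cite{NASS16} gives the same conclusion with $D_1=D_2$.

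For (a) in the series case, write
\[
\bar F_{X_{1:n}}(t)=\exp\Big(\sum_{i=1}^n h(\lambda_i)\Big), \qquad h(\lambda)=\log\bar F(t,\lambda).
\]
By assumption $h$ is concave. It is increasing (decreasing) exactly when $\bar F$ is increasing (decreasing) in $\lambda$. Then $-\sum_i h(\lambda_i)$ is a sum of convex functions, so by Proposition 3.C.1.b in \cite{Marshall:2011} it is Schur-convex. It is decreasing (increasing) in ${\bm\lambda}$. The standard characterisation (Theorem 3.A.8 in \cite{Marshall:2011}) then applies in each case:
\begin{itemize}
\item decreasing Schur-convex functions preserve ${\bm\lambda}\overset{w}{\geq}{\bm\lambda}^{*}$;
\item increasing Schur-convex functions preserve ${\bm\lambda}\geq_w{\bm\lambda}^{*}$.
\end{itemize}
In either case we obtain $\sum_i h(\lambda_i)\le\sum_i h(\lambda_i^{*})$. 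Hence $\bar F_{X_{1:n}}\le \bar F_{X^{*}_{1:n}}$, that is, $X_{1:n}\le_{ST}X^{*}_{1:n}$. The exact analogue of Proposition \ref{prop_ind1}(i) applies here, with $\bar F$ in place of $F$.

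For (a) in the parallel case, set $\varphi({\bm\lambda})=\prod_i F(t,\lambda_i)$. We need $\varphi$ to be Schur-concave in ${\bm\lambda}$, monotone in the right direction, so that $F_{X_{n:n}}\ge F_{X^{*}_{n:n}}$. The monotonicity is clear, since $F=1-\bar F$ is decreasing (increasing). For Schur-concavity we use Theorem 3.A.4 in \cite{Marshall:2011}. Exactly as in the proof of Proposition \ref{prop_ind1}, the sign of $(\lambda_i-\lambda_j)(\partial_i\varphi-\partial_j\varphi)$ is that of $(\lambda_i-\lambda_j)\big(m(\lambda_i)-m(\lambda_j)\big)$, where
\[
m(\lambda)=\frac{\partial_\lambda F}{F}=-\frac{\partial_\lambda \bar F}{\bar F}\cdot\frac{\bar F}{1-\bar F}.
\]
So it suffices to show that $m$ is increasing, which is equivalent to $F$ being log-convex in $\lambda$. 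Once that holds, the parallel claim of part i) follows directly from Remark \ref{rem_01} with $\phi(u)=-\log u$, or equivalently from Proposition \ref{prop_ind1}(ii), since $F$ is then decreasing (increasing) and log-convex.

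This is the main obstacle. Log-concavity of $\bar F$ controls only the first factor $\partial_\lambda\bar F/\bar F$, which has a fixed sign and is decreasing. The second factor $\bar F/(1-\bar F)$ moves in the opposite direction, so the product has no definite monotonicity without extra structure. Indeed, $\bar F$ log-concave does not in general imply $F$ log-convex. What I would try first is to prove the inequality $-\partial^2_\lambda\bar F\,(1-\bar F)\ge(\partial_\lambda\bar F)^2$ from the monotonicity of $\bar F$ together with its log-concavity. If that fails, I would add the needed hypothesis, namely $F$ log-convex (respectively log-concave in part ii), for the parallel comparison. The alternative is to verify that hypothesis separately for the families of Lemma \ref{lema01}. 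The series part of the proposition, by contrast, goes through exactly as outlined.
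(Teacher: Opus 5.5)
Your series arguments are correct and coincide with the paper's. The paper obtains them from Proposition~\ref{prop_02} and Remark~\ref{rem_02} with $\phi(u)=-\log u$. In the independent case that is exactly your computation with $h=\log\bar F(t,\cdot)$, followed by the termwise step (b).

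\textbf{Part i), parallel systems.} The obstacle you hit cannot be overcome, because this claim is false. Your reduction is the correct one, with one slip. To get $F_{X_{n:n}}\ge F_{X^{*}_{n:n}}$ you need $\prod_i F(t,\lambda_i)$ decreasing and Schur-\emph{convex}, not Schur-concave. Schur's condition then yields precisely that $m=\partial_\lambda F/F$ is increasing.

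The paper's proof reaches log-concavity of $\bar F$ only through a differentiation error. It differentiates $\varphi=\prod_k\bigl(1-\bar F(t,\lambda_k)\bigr)$ with cofactor $\prod_{k\neq i}\bar F(t,\lambda_k)$ instead of $\prod_{k\neq i}F(t,\lambda_k)$. With the correct cofactor one lands on your condition, $F$ log-convex.

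Here is a counterexample.
\begin{itemize}
\item Take $G=F$ with $\bar F(t,\lambda)=e^{-t/\lambda}$ (exponential with mean $\lambda$). Then $\log\bar F=-t/\lambda$ is increasing and concave in $\lambda$.
\item Take ${\bm\lambda}=(1,3)$ and ${\bm\lambda}^{*}=(2,2)$. This is a majorization, hence gives both weak orders.
\item At $t=10$: $F_{X_{2:2}}(10)=(1-e^{-10})(1-e^{-10/3})\approx 0.964<0.987\approx(1-e^{-5})^{2}=F_{Y_{2:2}}(10)$.
\item At $t=1$: $F_{X_{2:2}}(1)=(1-e^{-1})(1-e^{-1/3})\approx 0.179>0.155\approx(1-e^{-1/2})^{2}=F_{Y_{2:2}}(1)$.
\end{itemize}
The distribution functions cross, so $X_{2:2}\le_{ST}Y_{2:2}$ fails. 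In particular, the inequality $-\partial^2_\lambda\bar F\,(1-\bar F)\ge(\partial_\lambda\bar F)^2$ you planned to derive fails here, for instance at $t=10$ and $\lambda\in[1,3]$. Your fallback of assuming $F$ log-convex in $\lambda$ is the right repair. The claim then is just the parallel half of Proposition~\ref{prop_ind1}(ii).

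\textbf{Part ii), parallel systems.} This part needs no extra hypothesis, so you should not have left it open. The symmetric argument requires $F$ log-concave in $\lambda$, and this follows from the assumption. Indeed, $\bar F$ log-convex implies $\bar F$ convex, hence $F=1-\bar F$ concave, hence $F$ log-concave. The paper uses the middle link: it applies Theorem~\ref{th01} to the concave $F$, with $D_1=D_2$ the Schur-concave product copula. Your Schur-criterion route with $\log F$ concave works just as well. Equivalently, you can invoke the parallel half of Proposition~\ref{prop_ind1}(i).
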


\begin{proof}
	\begin{itemize}
		\item[i)] To prove that $X_{1:n}\leq _{ST}Y_{1:n}$, observe that, 
		for independent samples, as before,
		$\psi(x)=e^{-x}$ and $\phi(x)=-\log x$.
		Then, $\phi(1-F(t,\lambda))=-\log \bar F(t,\lambda)$ is convex 
		with respect to $\lambda$ since
		$\bar F(t,\lambda)$ is log-concave, and therefore,
		from Proposition \ref{prop_02}, we have the desired result.
		
		On the other hand, to prove $X_{n:n}\leq _{ST}Y_{n:n}$,
		firstly, we need to show that $X_{n:n}\leq _{ST}X_{n:n}^{*}$
		where $X_{n:n}^{\ast }=\max(X_1^{*},\ldots,X_n^{*})$ with 
		$X_{i}^{*}\sim F\left( t,\lambda_{i}^{*}\right)$
		for $i=1,\ldots,n$.
		It is clear that the reliability function of $X_{n:n}$ is $\bar{F}%
		_{X_{n:n}}(t)=1-\prod\limits_{i=1}^{n}\left( 1-\bar{F}\left( t,\lambda
		_{i}\right) \right) $. Thus, $\bar{F}_{X_{n:n}}(t)\leq \bar{F}%
		_{X_{n:n}^{\ast }}(t)$ holds if the function $\varphi \left( \lambda
		_{1},\ldots ,\lambda _{n}\right) =\prod\limits_{i=1}^{n}\left( 1-\bar{F%
		}\left( t,\lambda _{i}\right) \right) $ is Schur-convex in $\left( \lambda
		_{1},\ldots ,\lambda _{n}\right) $. Taking the derivative of $\varphi \left( 
		{\bm\lambda }\right) $ with respect to $\lambda _{i}$, we get%
		\begin{equation*}
		\frac{\partial \varphi \left( {\bm\lambda }\right) }{\partial \lambda
			_{i}}=-\frac{\partial \bar{F}\left( t,\lambda _{i}\right) }{\partial
			\lambda _{i}}\prod\limits_{k\neq i}\bar{F}\left( t,\lambda _{k}\right)
		\leq 0\left( \geq 0\right) ,
		\end{equation*}%
		where the inequality is due to the increasing (decreasing) property of $%
		\bar{F}\left( t,\lambda _{i}\right) $. For any pair $1\leq i\leq j\leq
		n $, we have%
		\begin{eqnarray*}
			\left( \lambda _{i}-\lambda _{j}\right) \left( \frac{\partial \varphi
				\left( {\bm\lambda }\right) }{\partial \lambda _{i}}-\frac{\partial
				\varphi \left( {\bm\lambda }\right) }{\partial \lambda _{j}}\right) 
			&\overset{sign}{=}&
			\left( \lambda _{i}-\lambda _{j}\right) \left( -\frac{%
				\partial \bar{F}\left( t,\lambda _{i}\right) }{\partial \lambda _{i}}%
			\bar{F}\left( t,\lambda _{j}\right) +\frac{\partial \bar{F}\left(
				t,\lambda _{j}\right) }{\partial \lambda _{j}}\bar{F}\left( t,\lambda
			_{i}\right) \right) \\
			&\overset{sign}{=}&-\left( \lambda _{i}-\lambda _{j}\right) \left( \frac{%
				\partial _{\lambda _{i}}\bar{F}\left( t,\lambda _{i}\right) }{\bar{%
					F}\left( t,\lambda _{i}\right) }-\frac{\partial _{\lambda _{j}}\bar{F}%
				\left( t,\lambda _{j}\right) }{\bar{F}\left( t,\lambda _{j}\right) }%
			\right) 
			\geq 0,
		\end{eqnarray*}%
		since, by assumption, $\partial _{\lambda _{i}}\bar{F}\left( t,\lambda
		_{i}\right) /\bar{F}\left( t,\lambda _{i}\right) $ is decreasing in $%
		\lambda _{i}$, i.e., $\bar{F}\left( t,\lambda
		_{i}\right) $ is log-concave in $\lambda _{i}$.	
		Therefore, from Theorem 3.A.4 in \cite{Marshall:2011}, we know that the function $\varphi \left( {\bm\lambda }\right) $ is
		decreasing (increasing) and Schur-convex in ${\bm\lambda }$.
		In other words, $X_{n:n}\leq _{ST}X_{n:n}^{*}$.
		
		On the other hand, the reliability
		function of $Y_{n:n}$ is
		\begin{equation*}
		\bar F_{Y_{n:n}}(t)=1-\prod_{i=1}^{n}\left( 1-\bar G\left( t,\lambda _{i}^{*}\right) \right).
		\end{equation*}%
		Now, it is clear that $\bar F_{X^{*}_{n:n}}\leq \bar F_{Y_{n:n}}$ if $\bar F(t)\leq \bar G(t)$ for $t>0$. In other words, $X^{*}_{1:n}\leq_{ST}Y_{1:n}$ when
		$F\leq_{ST} G$.
		This complete the proof.
		\item[ii)]			
		In order to prove that $X_{1:n}\geq _{ST}Y_{1:n}$,
		we know,  $\phi(1-F(t,\lambda))=-\log \bar F(t,\lambda)$ is concave 
		with respect to $\lambda$ since
		$\bar F(t,\lambda)$ is log-convex, and therefore,
		from Remark \ref{rem_02}, we have the desired result.
		
		On the other hand,			
		from Proposition 21.A.6 in \cite{Marshall:2007}, we know that if $%
		\bar F\left( t,\lambda \right) $ is log-convex, then $\bar F\left(
		t,\lambda \right) $ is convex, and so $F\left(
		t,\lambda \right) $ concave. 
		Then, the result is obtained from Theorem \ref{th01} where $D_1 = D_2$. 		
	\end{itemize}
\end{proof}



\section{Concluding remarks}\label{sec_conclusions}

We have presented results to compare the reliability of coherent systems 
with dependent components where its random lifetimes follows any distribution 
function dependent on a parameter. We have investigated 
three cases: general coherent systems with heterogeneous and dependent components where the dependency structure can be defined by any copula,
parallel and series systems with heterogenous and dependent components
assembled by the family of the Archimedean copulas, and finally, parallel and series systems for independent but not identically distributed components.
In addition, we have showed how to apply these results to particular families of distributions 
widely used in the literature (see, for instance, \cite{TL11, LK13}).

In future studies, we hope to report our findings regarding comparisons between the hazard (reversed) rate functions among coherent systems with heterogeneous, dependent  and distribution-free components. 
To the best of our knowledge, there exits some recent works about stochastic comparisons in the sense of the hazard (reversed) rate orders between parallel and series systems with associated Archimedean copulas and for some type of distribution functions. However, the study of these types of stochastic orders, under some majorization conditions, between coherent systems for any copula and any distribution function depending on a parameter is still an open problem.

\section*{Acknowledgements}
The author thanks the three anonymous
reviewers  and Editors for helpful comments on earlier versions of this manuscript.

\section*{Funding}

This work was partially supported by the Spanish Ministerio de Econom\'{i}a y Competitividad under grant ECO2015-70331-C2-1-R and Ministerio de Ciencia e Innovaci\'{o}n of Spain under grant 
PID2019-108079GB-C22/AEI/10.13039/501100011033.

\bigskip



\section*{Appendix}

First we recall some concepts of majorization. Let $\{x_{1:n},x_{2:n},\ldots
,x_{n:n}\}$ denote the increasing arrangement of the components of the
vector ${\bm x}=\left( x_{1},x_{2},\ldots ,x_{n}\right)$.

\begin{definition}\label{def_maj}
	The vector ${\bm x}$ is said to be
	
	\begin{itemize}
		\item[a)] majorized by the vector ${\bm y}$, denoted by ${\bm x}\overset{%
			\text{m}}{\leq }{\bm y}$, if
		\begin{equation*}
		\sum_{i=1}^{j}x_{i:n}\geq \sum_{i=1}^{j}y_{i:n},\quad \text{for }j=1,\ldots
		,n-1\quad \text{ and }\quad \sum_{i=1}^{n}x_{i:n}=\sum_{i=1}^{n}y_{i:n},
		\end{equation*}
		
		\item[b)] weakly supermajorized by the vector ${\bm y}$, denoted by ${\bm x%
			\overset{\text{w}}{\leq }\bm y}$, if
		\begin{equation*}
		\sum_{i=1}^{j}x_{i:n}\geq \sum_{i=1}^{j}y_{i:n},\quad \text{for }j=1,\ldots
		,n,
		\end{equation*}
		
		\item[c)] weakly submajorized by the vector ${\bm y}$, denoted by ${\bm x }
		\leq_{\text{w}} {\bm y}$, if
		\begin{equation*}
		\sum_{i=j}^{n}x_{i:n}\leq \sum_{i=j}^{n}y_{i:n},\quad \text{for }j=1,\ldots
		,n.
		\end{equation*}		
	\end{itemize}
\end{definition}

It is known that the majorization order implies both the weak submajorization and supermajorization orders.
The converse is, however, not true.

\bigskip

To finish this section, we study the monotony, convexity (concavity) and log-convexity (log-concavity) for different types of well known families of distributions. These results are given in the following lemma. {Some references about parametric families of distributions are, for instance, \cite{Marshall:2007, Balakrishnan-2018} and \cite{Silva-2019}.}

\begin{lemma}\label{lema01}
	Let $H$ a baseline distribution function defined in $t>0$ with associated hazard rate $r$ and reversed hazard rate $\tilde{r}$. Then,
	\begin{itemize}
		\item[i)] The type II half logistic scale (TIIHL-RS) distribution
		\begin{equation}\label{eq_tiil}
		F(t,\lambda ,\alpha )=\frac{2H^{\alpha }\left( \lambda t\right) }{%
			1+H^{\alpha }\left( \lambda t\right) },\quad\text{ for }\alpha ,\lambda >0,
		\end{equation}%
		is decreasing, convex and log-concave in $\alpha $,
		and it is increasing in $\lambda $, convex  if $\tilde{r}$ is increasing in $t$, and
		log-concave  if $\tilde{r}$ is decreasing in $t$.	
		
		\item[ii)] The reversed modified proportional hazard (MPRHR) distribution
		\begin{equation}\label{MPRHR}
		F\left( t,\lambda \right) =\frac{\alpha H^{\lambda }(t)}{1-\bar{\alpha }%
			H^{\lambda }(t)},\quad\text{for }\alpha ,\lambda >0
		\end{equation}%
		where $\bar{\alpha }=1-\alpha$, 
		is decreasing in $\lambda$ for $\alpha>0$, and
		it is log-convex (log-concave)  in $%
		\lambda $
		if $0<\alpha \leq 1$  ($\alpha >1$).
		
		\item[iii)] The modified proportional hazard (MPHR) distribution
		\begin{equation}\label{MPHR}
		F\left( t,\lambda \right) =\frac{1-\bar{H}^{\lambda }(t)}{1-\bar{%
				\alpha }\bar{H}^{\lambda }(t)},\quad\text{ for }\alpha ,\lambda>0,
		\end{equation}%
		where $\bar{\alpha }=1-\alpha$, 
		is increasing and concave in $\lambda$ for $\alpha >0$.
		The associated reliability function, $\bar{F}$, is log-convex (log-concave)  in $%
		\lambda $
		if $0<\alpha \leq 1$  ($\alpha >1$).
		\item[iv)] The scale proportional reversed hazard (SPRH) or resilience-scale (RS) distribution
		\begin{equation}\label{SPRH}
		F(t,\lambda ,\alpha )=H^{\alpha }\left( \lambda t\right),
		\quad\text{ for }\alpha ,\lambda>0,
		\end{equation}%
		is decreasing and convex in $\alpha$, and 
		it is increasing in $\lambda$ and log-convex (log-concave) if
		$\tilde{r}$ increasing (decreasing)
		in $t$. In addition, $\bar F$ is log-concave in $\lambda$ if $\tilde r$ increasing.
		\item[v)] The scale proportional hazard (SPH) distribution
		\begin{equation}\label{SPH}
		F(t,\lambda,\alpha)=1-\bar{H}^{\alpha}(\lambda t),%
		\quad\text{ for }\alpha ,\lambda>0,
		\end{equation}%
		it is increasing and concave in $\alpha$ and it is increasing in $\lambda$ and concave if ${r}$ is decreasing in $t>0$. 
		In addition, $\bar F$ is log-convex (log-concave) in $\lambda$ if $r$ decreasing (increasing).
		\item[vi)] The Pareto IV (P-IV) disribution 
		\begin{equation}\label{pareto}
		F(t,\lambda ,\alpha ,\beta )=1-\left( 1+\left( \lambda t\right) ^{\alpha
		}\right) ^{-\beta },\text{ for }\lambda ,\alpha ,\beta>0,
		\end{equation}%
		is increasing in $\lambda$ and in $\beta $
		and it is not monotone in $\alpha$. 
		The reliability function is log-convex in $\lambda$ if $0<\alpha \leq 1$
		and $\beta>0$,
		log-concave in $%
		\alpha $ for any $\lambda, \beta >0$.
		\item[vii)] The exponentiated Kumaraswamy-G (EK-G) distribution
		\begin{equation}\label{EKG}
		F(t,\alpha ,\beta ,\gamma )=\left( 1-\left( 1-H^{\alpha }\left( t\right)
		\right) ^{\beta }\right) ^{\gamma }\text{ for }\alpha ,\beta ,\gamma>0
		\end{equation}%
		is decreasing in $\alpha$ for all $\beta ,\gamma>0$ and  convex in $\alpha$ when $0<\beta,\gamma \leq 1$; it is increasing in $\beta$ 
		for all $\alpha,\gamma>0$ and concave when $%
		\gamma \leq 1$; it  is decreasing and convex in $\gamma$
		for all $\alpha,\beta>0$.
		\item[viii)] The exponentiated location-scale (EL-S) distribution
		\begin{equation}\label{ELS}
		F(t,\lambda ,\alpha ,\theta )=H^{\alpha }\left( \frac{t-\lambda }{\theta }%
		\right),\quad \text{ for }t>\lambda ,
		\end{equation}%
		where $\lambda \in\mathbb{R} $ and $\alpha ,\theta >0$, is
		decreasing in $\alpha$, decreasing in $\lambda $ and log-concave in $\lambda $ if $\tilde{r}$ decreasing in $t$, it
		is decreasing in $\theta $ and log-convex (log-concave) in $\theta $ if $x\tilde{r}\left( x\right) $ is increasing (decreasing) in $x$.
	\end{itemize}
\end{lemma}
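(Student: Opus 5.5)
The plan is to verify the eight items of Lemma \ref{lema01} one family at a time. For each claim we differentiate the relevant function ($F$, $\log F$ or $\log\bar F$) once or twice with respect to the parameter. We then reduce the sign of the derivative to an elementary inequality in a single auxiliary variable. The recurring device is a substitution: write $u=H^{\lambda}(t)\in(0,1)$ or $u=\bar H^{\lambda}(t)$, or $v=\log H(t)<0$, so that $\partial_\lambda u=u\log H$. For scale parameters we use $\partial_\lambda H(\lambda t)=t\,h(\lambda t)=H(\lambda t)\,\tilde r(\lambda t)\,t$, and the analogue $\partial_\lambda \bar H(\lambda t)=-t\,\bar H(\lambda t)\,r(\lambda t)$. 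With these substitutions each second derivative factors as (positive quantity)$\times$(simple function of $u$ or of $x=\lambda t$). The sign statements then follow from monotonicity of $\tilde r$, $r$ or $x\tilde r(x)$.

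We take the items in the following order.

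For (iv), SPRH, $\log F=\alpha\log H(\lambda t)$. This is linear in $\alpha$, so $F=e^{\alpha\log H}$ is decreasing and convex in $\alpha$. In $\lambda$ we have $\partial_\lambda\log F=\alpha t\,\tilde r(\lambda t)$, which gives monotonicity. Log-convexity or log-concavity then follows from $\tilde r$ increasing or decreasing. For $\bar F$ we compute $\partial_\lambda \log\bar F=-F\alpha t\tilde r/(1-F)$ and check that it is decreasing when $\tilde r$ is increasing.

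Item (v), SPH, is the mirror image, using $r$ and $\bar F=\bar H^{\alpha}(\lambda t)$. Concavity of $F$ in $\lambda$ needs $\partial_\lambda(-\bar F)=\alpha t r\,\bar F$ to be decreasing. This holds because $\bar F$ decreases and $r$ decreases.

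For (i), TIIHL-RS, put $w=H^{\alpha}(\lambda t)$, so that $F=2w/(1+w)$. Then $\log F=\log 2+\log w-\log(1+w)$. In $\alpha$, $\log w$ is linear and $-\log(1+e^{s})$ is concave in $s$, which gives log-concavity. Convexity comes from differentiating twice: with $w=e^{\alpha v}$,
\[
\partial_\alpha^2 F=\frac{2v^2w(1-w)}{(1+w)^3}\ge 0 .
\]
The $\lambda$-statements follow similarly from $\partial_\lambda w=\alpha t\tilde r(\lambda t)\,w$.

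For (ii) and (iii) we take $u=H^\lambda$ (respectively $\bar H^\lambda$) and write
\[
\log F=\log\alpha+\lambda v-\log(1-\bar\alpha e^{\lambda v}).
\]
The second derivative in $\lambda$ has the sign of $\bar\alpha$, which gives the dichotomy at $\alpha=1$. Concavity of the MPHR $F$ in $\lambda$ is a direct computation: $F=(1-u)/(1-\bar\alpha u)$, and the sign of the second derivative reduces to a positive factor times $-(\alpha+\bar\alpha u)$-type terms. The same $\bar\alpha$-sign argument handles $\log\bar F=\log\alpha+\lambda\log\bar H-\log(1-\bar\alpha\bar H^\lambda)$.

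For (vi), $\log\bar F=-\beta\log(1+(\lambda t)^\alpha)$. In $\lambda$ we substitute $s=\log\lambda$. In $\alpha$ we use that $\log(1+e^{\alpha c})$ is convex in $\alpha$.

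For (vii) we proceed by the chain rule through the layers $H^\alpha$, $(1-\cdot)^\beta$ and $(\cdot)^\gamma$, each being monotone and convex or concave under the stated parameter ranges. In $\gamma$, $F=e^{\gamma\log(\cdot)}$ is decreasing and convex.

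For (viii), $\partial_\lambda\log F=-\alpha\tilde r((t-\lambda)/\theta)/\theta$, and $\partial_\theta\log F=-\alpha x\tilde r(x)/\theta$ with $x=(t-\lambda)/\theta$. The stated conditions on $\tilde r$ and on $x\tilde r(x)$ then give the claims.

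The main obstacle will be the two-layer compositions: convexity of $F$ in $\alpha$ for EK-G, and concavity of MPHR in $\lambda$ for all $\alpha>0$. In these cases monotonicity of the composed pieces alone does not settle the sign, and the second derivative must be factored carefully. For EK-G we would first try to write $F$ as a composition of a convex increasing map with a convex map of $\alpha$, checking that $z\mapsto(1-(1-z)^\beta)^\gamma$ is increasing and concave for $\beta,\gamma\le1$ while $\alpha\mapsto H^\alpha$ is convex and decreasing. If the direction of that composition fails, we would fall back on computing the second derivative explicitly in the variable $z=H^\alpha$ and bounding it.
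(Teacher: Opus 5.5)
Your method (differentiate $F$, $\log F$ or $\log\bar F$ in the parameter and read off signs) is the paper's method, and it works for most items. However, two of the sub-claims you set out to verify are false, so those steps of your plan cannot be completed. The paper's proof slips at exactly the same two places.

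\emph{MPHR, concavity in $\lambda$ for all $\alpha>0$.} Put $u=\bar H^{\lambda}(t)$ and $v=\log\bar H(t)$. Then $\partial_\lambda F=-\alpha uv/(1-\bar\alpha u)^2$ and
\[
\frac{\partial^2F}{\partial\lambda^2}=-\frac{\alpha v^2u\,(1+\bar\alpha u)}{(1-\bar\alpha u)^3}.
\]
The relevant factor is $1+\bar\alpha u$, not a positive quantity like $\alpha+\bar\alpha u$. Since $1-\bar\alpha u>0$, concavity for every $t$ holds iff $1+\bar\alpha u\ge 0$ on $(0,1)$, i.e.\ iff $\alpha\le 2$. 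For example, take $\alpha=3$ and $\bar H(t)=e^{-t}$. Then $F=f(\lambda t)$ with $f(x)=(e^x-1)/(e^x+2)$, and $f''(x)=3e^x(2-e^x)/(e^x+2)^3>0$ for $x<\log 2$. No careful factoring will rescue this step; the claim must be restricted to $0<\alpha\le 2$. The paper writes $-\alpha(1+\bar\alpha\bar H^\lambda)\le 0$ without checking the sign of the second factor.

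\emph{EL-S, log-concavity in $\theta$ when $x\tilde r(x)$ is decreasing.} Your formula $\partial_\theta\log F=-\alpha x\tilde r(x)/\theta$, with $x=(t-\lambda)/\theta$, is correct. But $x$ decreases in $\theta$, so $x\tilde r(x)$ decreasing in $x$ makes it increasing in $\theta$, while $1/\theta$ decreases. The product therefore has no determined monotonicity. Rewriting $\partial_\theta\log F=-\alpha x^2\tilde r(x)/(t-\lambda)$ shows that log-concavity in $\theta$ is equivalent to $x^2\tilde r(x)$ being decreasing in $x$, which is strictly stronger. Counterexample: the Fr\'echet baseline $H(x)=e^{-x^{-k}}$ with $0<k<1$ has $x\tilde r(x)=kx^{-k}$ decreasing, yet $\log F=-\alpha(t-\lambda)^{-k}\theta^{k}$ is strictly convex in $\theta$. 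The log-convex half is fine, since then both factors decrease in $\theta$. The paper's error is asserting $2\tilde r+x\tilde r'\ge 0$ from $x\tilde r$ decreasing.

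\emph{EK-G, convexity in $\alpha$ (smaller point).} The composition you try first, an increasing concave outer map applied to a decreasing convex inner one, gives nothing, so your fallback is mandatory. With $z=H^\alpha(t)$ and $\Phi(z)=(1-(1-z)^\beta)^\gamma$, one has $\partial_\alpha^2F=(\log H(t))^2 z\,(z\Phi'(z))'$. Setting $x=1-z$, the sign condition becomes
\[
(1-\gamma)\frac{\beta x^\beta}{1-x^\beta}\le \frac{x}{1-x}+1-\beta .
\]
This is true. Put $m=-\log x$ and $\kappa(y)=y/(e^y-1)$. The left side is at most $\kappa(\beta m)/m$, and $x/(1-x)=\kappa(m)/m$. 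Since $\kappa'\ge -1/2$ (equivalent to $\sinh y\ge y$), the difference is at most $(1-\beta)/2$. It is nonetheless a genuine estimate you must supply. For $\gamma<1$ the term $\beta(\gamma-1)(\cdots)$ in the paper's final display is negative, so the paper does not justify this case either.
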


\begin{proof} 
	\begin{itemize}
		\item[i)] The partial derivative of \eqref{eq_tiil} with respect to $\alpha $ is%
		\begin{eqnarray*}
			\frac{\partial F(t,\lambda ,\alpha )}{\partial \alpha } &=&\frac{2}{\left(
				1+H^{\alpha }\left( \lambda t\right) \right) ^{2}}\left( H^{\alpha }\left(
			\lambda t\right) \log H\left( \lambda t\right) \left( 1+H^{\alpha }\left(
			\lambda t\right) \right) -H^{\alpha }\left( \lambda t\right) H^{\alpha
			}\left( \lambda t\right) \log H\left( \lambda t\right) \right)  \\
			&=&\frac{2}{\left( 1+H^{\alpha }\left( \lambda t\right) \right) ^{2}}\log
			H\left( \lambda t\right) H^{\alpha }\left( \lambda t\right) \leq 0,
		\end{eqnarray*}%
		and the second partial derivative is%
		\begin{eqnarray*}
			\frac{\partial ^{2}F(t,\lambda ,\alpha )}{\partial \alpha ^{2}} &\overset{sign}{=}&
			H^{\alpha
			}\left( \lambda t\right) \left(\log H\left( \lambda t\right)\right)^{2} \left( 1+H^{\alpha
			}\left( \lambda t\right) \right) ^{2}-
			2\left(\log H\left( \lambda t\right) H^{\alpha}\left( \lambda t\right)\right)^{2} \left( 1+H^{\alpha }\left( \lambda t\right)
			\right) 
			\\
			&\overset{sign}{=}&
			H^{\alpha }\left( \lambda t\right) \left(\log H\left( \lambda
			t\right)\right)^{2} \left( 1+H^{\alpha }\left( \lambda t\right) \right) \Big( 
			1+H^{\alpha }\left( \lambda t\right)  -2H^{\alpha }\left( \lambda
			t\right)  \Big)  \\
			&=& 
			H^{\alpha }\left( \lambda t\right) \left(\log H\left( \lambda
			t\right)\right)^{2} \left( 1+H^{\alpha }\left( \lambda t\right) \right) \Big( 
			1  -H^{\alpha }\left( \lambda
			t\right)  \Big)
			\geq 0.
		\end{eqnarray*}%
		Therefore, $F(t,\lambda ,\alpha )$ is decreasing and convex in $\alpha$.  Next, we study the monotony of $\log F(t,\lambda ,\alpha )$
		with respect to $\alpha $. It is easy to obtain%
		\begin{equation*}
		\frac{\partial \log F(t,\lambda ,\alpha )}{\partial \alpha }=\frac{\log
			H\left( \lambda t\right) }{1+H^{\alpha }\left( \lambda t\right) }
		\end{equation*}%
		and
		\begin{equation*}
		\frac{\partial^{2} \log F(t,\lambda ,\alpha )}{\partial \alpha^{2} }
		\overset{sign}{=}-\left( \log
		H\left( \lambda t\right) \right) ^{2}H^{\alpha }\left( \lambda t\right) \leq
		0,
		\end{equation*}%
		i.e, $F(t,\lambda ,\alpha )$ is log-concave in $\alpha $.
		
		On the other hand, the derivative of \eqref{eq_tiil} with respect to $%
		\lambda $ is%
		\begin{eqnarray*}
			\frac{\partial F(t,\lambda ,\alpha )}{\partial \lambda } &=&\frac{2}{\left(
				1+H^{\alpha }\left( \lambda t\right) \right) ^{2}}\left( \alpha t H^{\alpha
			}\left( \lambda t\right) \tilde{r}\left( \lambda t\right) \left(
			1+H^{\alpha }\left( \lambda t\right) \right) - \alpha t
			\left(H^{\alpha }\left( \lambda t\right)\right)^{2}
			\tilde{r}\left(\lambda t\right) \right)  \\
			&=&\frac{2\alpha
				t}{\left( 1+H^{\alpha }\left( \lambda t\right) \right) ^{2}}H^{\alpha }\left( \lambda t\right) \tilde{r}\left( \lambda t\right)
			\left( 1+H^{\alpha }\left( \lambda t\right) -H^{\alpha }\left( \lambda
			t\right) \right)  \\
			&=&\frac{2\alpha
				t}{\left( 1+H^{\alpha }\left( \lambda t\right) \right) ^{2}}
			H^{\alpha }\left( \lambda t\right) \tilde{r}\left( \lambda t\right)
			\geq 0,
		\end{eqnarray*}%
		where $\tilde{r}$ is the reversed hazard rate of the baseline distribution function $H$, and the second partial derivative is%
			\begin{eqnarray*}
		\frac{\partial ^{2}F(t,\lambda ,\alpha )}{\partial \lambda ^{2}}
		&\overset{sign}{=}&
		\big( \alpha \tilde{r}^{2}\left( \lambda t\right) +%
		\tilde{r}^{\prime }\left( \lambda t\right) \big) \big( 1+H^{\alpha
		}\left( \lambda t\right) \big) -2\alpha H^{\alpha }\left( \lambda t\right) 
		\tilde{r}^{2}\left( \lambda t\right)  \\
		&=&\tilde{r}^{\prime }\left( \lambda t\right) \big( 1+H^{\alpha }\left(
		\lambda t\right) \big) +\alpha \tilde{r}^{2}\left( \lambda t\right)
		-\alpha H^{\alpha }\left( \lambda t\right) \tilde{r}^{2}\left( \lambda
		t\right)  \\
		&=&\tilde{r}^{\prime }\left( \lambda t\right) \big( 1+H^{\alpha }\left(
		\lambda t\right) \big) +\alpha \tilde{r}^{2}\left( \lambda t\right)
		\left( 1-H^{\alpha }\left( \lambda t\right) \right) \geq 0
	\end{eqnarray*}
	whenever $\tilde{r}^{\prime }\left( \lambda t\right) \geq 0$. Thus, $%
		F(t,\lambda ,\alpha )$ is increasing and convex in $\lambda $ if $\tilde{%
			r}$ is increasing in $t$. 
		Note that, as \cite{Marshall:2007} proved
		(see p. 179), nonnegative random variables cannot have distributions with
		increasing reversed hazard rate.
		
		In order to study
		the log-convexity or log-concavity of \eqref{eq_tiil} with respect
		to $\lambda $, we need to prove that 
		\begin{equation*}
		\frac{\partial \log F(t,\lambda ,\alpha )}{%
			\partial \lambda }=\frac{\alpha t\text{ }\tilde{r}\left( \lambda
			t\right) }{1+H^{\alpha }\left( \lambda t\right) }
		\end{equation*}%
		is increasing or decreasing, respectively. Note that%
		\begin{eqnarray*}
			\frac{\partial^{2} \log F(t,\lambda ,\alpha )}{\partial \lambda^{2} }&\overset{sign}{=}&\alpha
			t^{2}\tilde{r}^{\prime }\left( \lambda t\right) \left( 1+H^{\alpha
			}\left( \lambda t\right) \right) -\left( \alpha t\right) ^{2}H^{\alpha
			}\left( \lambda t\right) \tilde{r}^{2}\left( \lambda t\right) \\
			&\overset{sign}{=}&\tilde{r}^{\prime }\left( \lambda t\right) \left(
			1+H^{\alpha }\left( \lambda t\right) \right) -\alpha H^{\alpha }\left(
			\lambda t\right) \tilde{r}^{2}\left( \lambda t\right) 
			\leq 0,
		\end{eqnarray*}%
		if $\tilde r^{\prime }\left( \lambda t\right) \leq 0$, i.e, $F(t,\lambda ,\alpha )$
		is log-concave in $\lambda $ if $\tilde{r}$ is decreasing in $t$.
		\item[ii)] The partial 
		derivative of \eqref{MPRHR} with respect to $\lambda $ is%
		\begin{eqnarray*}
			\frac{\partial F(t,\lambda )}{\partial \lambda } &=&\frac{\alpha }{\left( 1-%
				\bar{\alpha }H^{\lambda }(t)\right) ^{2}}\Big( H^{\lambda }(t)\log
			H(t)\left( 1-\bar{\alpha }H^{\lambda }(t)\right) +\alpha H^{\lambda }(t)%
			\bar{\alpha }H^{\lambda }(t)\log H(t)\Big)  \\
			&=&\frac{\alpha }{\left( 1-\bar{\alpha }H^{\lambda }(t)\right) ^{2}}%
			H^{\lambda }(t)\log H(t)\left( 1-\bar{\alpha }H^{\lambda }(t)+\alpha 
			\bar{\alpha }H^{\lambda }(t)\right)  \\
			&=&\frac{\alpha }{\left( 1-\bar{\alpha }H^{\lambda }(t)\right) ^{2}}%
			H^{\lambda }(t)\log H(t)\left( 1-\bar{\alpha} ^{2}H^{\lambda }(t)\right) \leq 0
		\end{eqnarray*}%
		for $\alpha >0$, i.e, the function $F(t,\lambda )$ is decreasing in $%
		\lambda $. Now, observe that
		\[\log F(t,\lambda )=\log \alpha + \log H^{\lambda}(t)-\log \left(1-\bar \alpha H^{\lambda}(t) \right).		\]
		Then, the first partial derivative of $\log F$ with respect to $\lambda$ is
		\begin{equation*}
		\frac{\partial \log F(t,\lambda )}{\partial \lambda }=%
		\log H(t)+\frac{\bar\alpha H^{\lambda }(t) \log H(t)}{1-\bar{%
				\alpha }H^{\lambda }(t)}	
		=
		\frac{\log H(t) }{1-\bar{%
				\alpha }H^{\lambda }(t)},
		\end{equation*}%
		then%
		\begin{eqnarray*}
			\frac{\partial^{2} \log F(t,\lambda ,\alpha )}{\partial \lambda^{2} }
			&\overset{sign}{=}&
			\bar\alpha H^{\lambda }(t)
			\left(\log H(t)\right)^{2}.					 
		\end{eqnarray*}%
		Therefore, $F(t,\lambda )$ is log-convex in $%
		\lambda $	
		if $0<\alpha \leq 1$ and it is log-concave in $%
		\lambda $ if $\alpha >1$.
		\item[iii)] The
		partial derivative of \eqref{MPHR} with respect to $\lambda $ is%
		\begin{eqnarray*}
			\frac{\partial F(t,\lambda )}{\partial \lambda }
			&=&\frac{\bar{H}^{\lambda }(t)\log \bar{H}(t)}{\left( 1-\bar{\alpha }\bar{H}^{\lambda }(t)\right) ^{2}%
			}\Big( -\left( 1-\bar{%
				\alpha }\bar{H}^{\lambda }(t)\right) +\bar{\alpha }\left( 1-%
			\bar{H}^{\lambda }(t)\right) \Big)  \\
			&=&\frac{\bar{H}^{\lambda }(t)\log \bar{H}(t)}{\left( 1-\bar{\alpha }\bar{H}^{\lambda }(t)\right) ^{2}}		
		\left( \bar{\alpha }%
			-1\right) \\
			&\overset{sign}{=}&
			-\alpha \log \bar{H}(t)
			\geq 0,
		\end{eqnarray*}%
		since $\alpha >0$. Thus, $%
		F(t,\lambda )$ is increasing in $\lambda $. The second partial derivative of 
		$F(t,\lambda )$ with respect to $\lambda $ is%
		\begin{eqnarray*}
			\frac{\partial ^{2}F(t,\lambda )}{\partial \lambda ^{2}}&\overset{sign}{=}&
			\left( \log \bar{H}(t)\right) ^{2}\left( \bar{\alpha }-1\right) 
			\bar{H}^{\lambda }(t)\left( 1-\bar{\alpha }\bar{H}^{\lambda
			}(t)\right) ^{2}\\
		&&\hspace{0.8cm}
		+2\left( \bar{\alpha }-1\right) \bar{\alpha }%
			\left( \bar{H}^{\lambda }(t)\log \bar{H}(t)\right) ^{2}\left( 1-%
			\bar{\alpha }\bar{H}^{\lambda }(t)\right)  \\
			&\overset{sign}{=}&\left( \bar{\alpha }-1\right) \Big( 1-\bar{%
				\alpha }\bar{H}^{\lambda }(t)+2\bar{\alpha }\bar{H}^{\lambda
			}(t)\Big)  \\
			&=&\left( \bar{\alpha }-1\right) \big( 1+\bar{\alpha }\bar{H}%
			^{\lambda }(t)\big) =-\alpha \left( 1+\bar{\alpha }\bar{H}%
			^{\lambda }(t)\right) \leq 0.
		\end{eqnarray*}%
		Then, $F(t,\lambda )$ is concave in $\lambda $.
		On the other hand, it is easy to get
		\[\log \bar F(t,\lambda )=\log\alpha+\log \bar{H}^{\lambda }(t)
		-\log \left( 1-\bar{\alpha }\bar{H}%
		^{\lambda }(t)\right).
		\]
		Therefore
		\begin{equation*}
		\frac{\partial \log \bar F(t,\lambda )}{\partial \lambda } =	\log \bar H(t)+\frac{\bar\alpha \bar H^{\lambda }(t) \log \bar H(t)}{1-\bar{%
				\alpha }\bar H^{\lambda }(t)}	
		=
		\frac{\log \bar H(t) }{1-\bar{%
				\alpha } \bar H^{\lambda }(t)},
		\end{equation*}%
		then%
		\begin{eqnarray*}
			\frac{\partial^{2} \log \bar F(t,\lambda ,\alpha )}{\partial \lambda^{2} }
			&\overset{sign}{=}&
			\bar\alpha \bar H^{\lambda }(t)
			\left(\log \bar H(t)\right)^{2}.					 
		\end{eqnarray*}%
		Therefore, $\bar F(t,\lambda )$ is log-convex in $%
		\lambda $	
		if $0<\alpha \leq 1$ and it is log-concave in $%
		\lambda $ if $\alpha >1$.
		\item[iv)] Let us consider $F(t,\lambda ,\alpha )$ defined as in \eqref{SPRH}.
		It is easy to check that%
		\begin{equation*}
		\frac{\partial F(t,\lambda ,\alpha )}{\partial \alpha }=H^{\alpha }\left(
		\lambda t\right) \log H\left( \lambda t\right) \leq 0
		\quad\text{ and }\quad
		\frac{%
			\partial ^{2}F(t,\lambda ,\alpha )}{\partial \alpha ^{2}}=H^{\alpha }\left(
		\lambda t\right) \left( \log H\left( \lambda t\right) \right) ^{2}\geq 0.
		\end{equation*}%
		Therefore, $F(t,\lambda ,\alpha )$ is decreasing and convex in $\alpha $. 
		
		On the other hand,%
		\begin{equation*}
		\frac{\partial F(t,\lambda ,\alpha )}{\partial \lambda }=\alpha t H^{\alpha
		}\left( \lambda t\right) \tilde r\left( \lambda t\right) \geq 0,
		\end{equation*}%
		i.e., $F(t,\lambda ,\alpha )$ is increasing in $\lambda $, and 
		\begin{equation*}
		\frac{\partial^{2} F(t,\lambda ,\alpha )}{\partial \lambda^{2} }=
		\alpha t\Big(  \alpha t H^{\alpha}(t) \tilde r^{2}( \lambda t) +
		t H^{\alpha}(t) \tilde r'(\lambda t)
		\Big)\overset{sign}{=}
		\alpha \tilde r^{2}( \lambda t) + \tilde r'(\lambda t).
		\end{equation*}%
		Then, $F$ is convex in $\lambda$ if $\tilde r$ is increasing in $t$.
		Now, we study the
		monotony of $\log F(t,\lambda ,\alpha )$ with respect to $%
		\lambda $. It is easy to obtain
		$\log F(t,\lambda ,\alpha )=\alpha \log H(\lambda t)$, then
		\begin{equation*}
		\frac{\partial \log F(t,\lambda ,\alpha )}{\partial \lambda }=\alpha t%
		\tilde{r}\left( \lambda t\right) .
		\end{equation*}%
		Hence, $\tilde{r}\left( \lambda t\right) $ is increasing (decreasing) in 
		$\lambda ,$ which is equivalent to $\tilde{r}$ increasing (decreasing)
		in $t$, implies $F(t,\lambda ,\alpha )$ is log-convex (log-concave) in\ $%
		\lambda $. 
		Next, observe that 
		\[\log \bar F(t,\lambda ,\alpha )=\log \left(1-H^{\alpha
		}\left( \lambda t\right)\right).\]
		Then 
		\begin{equation*}
		\frac{\partial \log \bar F(t,\lambda ,\alpha )}{\partial \lambda }=
		-\alpha t%
		\tilde{r}\left( \lambda t\right)\frac{H^{\alpha
			}\left( \lambda t\right)}{1-H^{\alpha
			}\left( \lambda t\right)}.
		\end{equation*}%
		Thus, the above ratio is decreasing if $\tilde r$ is increasing since 
		$H^{\alpha
		}\left( \lambda t\right)/(1-H^{\alpha 
		}\left( \lambda t\right))$ is positive-valued and increasing.
		Therefore, $\bar F$ is log-concave in $\lambda$ if $\tilde r$ increasing.
		\item[v)] Let us consider $F(t,\lambda ,\alpha )$ defined as in \eqref{SPH}
		It is easy to check that%
		\begin{equation*}
		\frac{\partial F(t,\lambda ,\alpha )}{\partial \alpha }=- \bar H^{\alpha }\left(
		\lambda t\right) \log \bar H\left( \lambda t\right) \geq 0
		\quad\text{ and }\quad
		\frac{%
			\partial ^{2}F(t,\lambda ,\alpha )}{\partial \alpha ^{2}}=- \bar H^{\alpha }\left(
		\lambda t\right) \left( \log \bar H\left( \lambda t\right) \right) ^{2}\leq 0.
		\end{equation*}%
		Therefore, $F(t,\lambda ,\alpha )$ is increasing and concave in $\alpha $. 
		
		On the other hand,%
		\begin{equation*}
		\frac{\partial F(t,\lambda ,\alpha )}{\partial \lambda }=\alpha t \bar H^{\alpha
		}\left( \lambda t\right)  r\left( \lambda t\right) \geq 0,
		\end{equation*}%
		i.e., $F(t,\lambda ,\alpha )$ is increasing in $\lambda $, and 
		\begin{equation*}
		\frac{\partial^{2} F(t,\lambda ,\alpha )}{\partial \lambda^{2} }=
		\alpha t\Big( - \alpha t \bar H^{\alpha}(t) {r}^{2}( \lambda t) +
		t \bar H^{\alpha}(t) {r}'(\lambda t)
		\Big)\overset{sign}{=}
		-\alpha {r}^{2}( \lambda t) + {r}'(\lambda t).
		\end{equation*}%
		Then, $F$ is concave in $\lambda$ if ${r}$ is decreasing in $t$.
		Next, observe that 
		\[\log  F(t,\lambda ,\alpha )=\log \left(1-\bar H^{\alpha
		}\left( \lambda t\right)\right).\]
		Then 
		\begin{equation*}
		\frac{\partial \log  F(t,\lambda ,\alpha )}{\partial \lambda }=
		\alpha t%
		{r}\left( \lambda t\right)\frac{\bar H^{\alpha
			}\left( \lambda t\right)}{1-\bar H^{\alpha
			}\left( \lambda t\right)}.
		\end{equation*}%
		Thus, the above ratio is decreasing if $ r$ is decreasing since 
		$\bar H^{\alpha
		}\left( \lambda t\right)/(1-\bar H^{\alpha 
		}\left( \lambda t\right))$ is positive-valued and decreasing.
		Therefore, $F$ is log-concave in $\lambda$ if $ r$ decreasing.
		Now, we study the
		monotony of $\log \bar F(t,\lambda ,\alpha )$ with respect to $%
		\lambda $. It is easy to obtain
		$\log \bar F(t,\lambda ,\alpha )=\alpha \log \bar H(\lambda t)$, then
		\begin{equation*}
		\frac{\partial \log \bar F(t,\lambda ,\alpha )}{\partial \lambda }=- \alpha t%
		{r}\left( \lambda t\right) .
		\end{equation*}%
		Hence, ${r}\left( \lambda t\right) $ is decreasing (increasing) in 
		$\lambda ,$ which is equivalent to ${r}$ decreasing (increasing)
		in $t$, implies $\bar F(t,\lambda ,\alpha )$ is log-convex (log-concave) in\ $%
		\lambda $. 
		\item[vi)] Let us consider $F$ defined as in \eqref{pareto}.
		It is easy to
		obtain%
		\begin{equation*}
		\frac{\partial F(t,\lambda ,\alpha ,\beta )}{\partial \lambda }=\alpha \beta
		t\frac{\left( \lambda t\right) ^{\alpha -1}}{\left( 1+\left( \lambda
			t\right) ^{\alpha }\right) ^{\beta +1}}\geq 0, \quad \quad
		\frac{\partial F(t,\lambda ,\alpha ,\beta )}{\partial \alpha }=\beta \left(
		\log \lambda t\right) \frac{\left( \lambda t\right) ^{\alpha }}{\left(
			1+\left( \lambda t\right) ^{\alpha }\right) ^{\beta +1}},
		\end{equation*}%
		and
		\begin{equation*}
		\frac{\partial F(t,\lambda ,\alpha ,\beta )}{\partial \beta }=\frac{\log
			\left( 1+\left( \lambda t\right) ^{\alpha }\right) }{\left( 1+\left( \lambda
			t\right) ^{\alpha }\right) ^{\beta }}\geq 0,
		\end{equation*}%
		then $F(t,\lambda ,\alpha ,\beta )$ is increasing in $\lambda$ and in $\beta $
		and it is not monotone in $\alpha$. 
		Therefore, we can use the results of this manuscript when ${\bm\alpha }%
		\overset{m}{\geq }{\bm\alpha }^{\ast }$.
		
		Now, we
		study the monotony of the logarithm of the reliability function. 
		Observe that
		\[\log \bar F(t,\lambda ,\alpha ,\beta )=
		-\beta \log \left(1+\left( \lambda
		t\right) ^{\alpha }\right).\]
		Then,
		\begin{equation*}
		\frac{\partial \log \bar{F}(t,\lambda
			,\alpha ,\beta )}{\partial \lambda }=-t\alpha \beta \frac{\left( \lambda
			t\right) ^{\alpha -1}}{1+\left( \lambda t\right) ^{\alpha }}
		\end{equation*}%
		and
		\begin{equation*}
		\frac{\partial^{2} \log \bar{F}(t,\lambda
			,\alpha ,\beta )}{\partial \lambda^{2} }=\frac{%
			\alpha \beta }{\lambda ^{2}}\frac{\left( \lambda t\right) ^{\alpha }}{\left(
			1+\left( \lambda t\right) ^{\alpha }\right) ^{2}}\left( 1+\left( \lambda
		t\right) ^{\alpha }-\alpha \right) \geq 0,
		\end{equation*}%
		if $0<\alpha \leq 1$. In other words, if $0<\alpha \leq 1$ then $\bar{F}%
		(t,\lambda ,\alpha ,\beta )$ is log-convex in $\lambda $ and therefore, $%
		\bar{F}(t,\lambda ,\alpha ,\beta )$ is convex in $\lambda $. 
		
		On the other hand, 
		\begin{equation*}
		\frac{\partial \log \bar{F}(t,\lambda
			,\alpha ,\beta )}{\partial \alpha }=-\frac{\beta \left( \lambda t\right)
			^{\alpha }\left( \log \lambda t\right) }{1+\left( \lambda t\right) ^{\alpha }}%
		,
		\end{equation*}%
		and
		\begin{equation*}
		\frac{\partial^{2} \log \bar{F}(t,\lambda
			,\alpha ,\beta )}{\partial \alpha^{2} }=-\beta
		\left( \log \lambda t\right) ^{2}\frac{\left( \lambda t\right) ^{\alpha }}{%
			\left( 1+\left( \lambda t\right) ^{\alpha }\right) ^{2}}\leq 0,
		\end{equation*}%
		therefore $\bar{F}(t,\lambda ,\alpha ,\beta )$ is log-concave in $%
		\alpha $ for any $\lambda, \beta >0$.

		Finally, for the parameter $\beta $, 
		\begin{equation*}
		\frac{\partial \log \bar{F}(t,\lambda
			,\alpha ,\beta )}{\partial \beta }=-\log \left( 1+\left( \lambda t\right)
		^{\alpha }\right),
		\end{equation*}%
		which is constant with respect to $\beta$, and therefore $\bar F$ is both
		log-convex and log-concave in $\beta$.
		\item[vii)] 
		Firstly, we study the monotony and the
		convexity/concavity of the distribution function defined in \eqref{EKG} with respect to $\alpha $.
		Observe that%
		\begin{equation*}
		\frac{\partial F(t,\alpha ,\beta ,\gamma )}{\partial \alpha }=\gamma \beta
		\left( \log H\left( t\right) \right) H^{\alpha }\left( t\right) \left(
		1-H^{\alpha }\left( t\right) \right) ^{\beta -1}\left( 1-\left( 1-H^{\alpha
		}\left( t\right) \right) ^{\beta }\right) ^{\gamma -1}\leq 0,
		\end{equation*}%
		so $F(t,\alpha ,\beta ,\gamma )$ is decreasing in $\alpha $. Now, we have
		that the second partial derivative with respect to $\alpha $ is 
		\begin{eqnarray*}
			\frac{\partial ^{2}F(t,\alpha ,\beta ,\gamma )}{\partial \alpha ^{2}}%
			&\overset{sign}{=}&\left( \log H\left( t\right) \right) ^{2}H^{\alpha }\left(
			t\right) \left( 1-H^{\alpha }\left( t\right) \right) ^{\beta -1}\left(
			1-\left( 1-H^{\alpha }\left( t\right) \right) ^{\beta }\right) ^{\gamma -1}
			\\
			&&-\left( \beta -1\right) \left( \log H\left( t\right) H^{\alpha }\left(
			t\right) \right) ^{2}\left( 1-H^{\alpha }\left( t\right) \right) ^{\beta
				-2}\left( 1-\left( 1-H^{\alpha }\left( t\right) \right) ^{\beta }\right)
			^{\gamma -1} \\
			&&+\beta \left( \gamma -1\right) \left( \log H\left( t\right) H^{\alpha
			}\left( t\right) \right) ^{2}\left( \left( 1-H^{\alpha }\left( t\right)
			\right) ^{\beta -1}\right) ^{2}\left( 1-\left( 1-H^{\alpha }\left( t\right)
			\right) ^{\beta }\right) ^{\gamma -2} \\
			&\overset{sign}{=}&\left( 1-H^{\alpha }\left( t\right) \right) ^{\beta
				-1}\left( 1-\left( 1-H^{\alpha }\left( t\right) \right) ^{\beta }\right)
			^{\gamma -1}\\
			&&-\left( \beta -1\right) H^{\alpha }\left( t\right) \left(
			1-H^{\alpha }\left( t\right) \right) ^{\beta -2}\left( 1-\left( 1-H^{\alpha
			}\left( t\right) \right) ^{\beta }\right) ^{\gamma -1} \\
			&&+\beta \left( \gamma -1\right) H^{\alpha }\left( t\right) \left( \left(
			1-H^{\alpha }\left( t\right) \right) ^{\beta -1}\right) ^{2}\left( 1-\left(
			1-H^{\alpha }\left( t\right) \right) ^{\beta }\right) ^{\gamma -2}.
		\end{eqnarray*}%
		Set $x=1-H^{\alpha }\left( t\right)$, then%
		\begin{eqnarray*}
			\frac{\partial ^{2}F(t,\alpha ,\beta ,\gamma )}{\partial \alpha ^{2}}%
			&\overset{sign}{=}&x^{\beta -1}\left( 1-x^{\beta }\right) ^{\gamma -1}-\left(
			\beta -1\right) \left( 1-x\right) x^{\beta -2}\left( 1-x^{\beta }\right)
			^{\gamma -1}\\
			&&\hspace{0.5cm}+\beta \left( \gamma -1\right) \left( 1-x\right) \left( x^{\beta
				-1}\right) ^{2}\left( 1-x^{\beta }\right) ^{\gamma -2} \\
			&=&\left( 1-x^{\beta }\right) ^{\gamma }\left( \frac{x^{\beta -1}}{%
				1-x^{\beta }}-\left( \beta -1\right) \frac{\left( 1-x\right) x^{\beta -2}}{%
				1-x^{\beta }}+\beta \left( \gamma -1\right) \frac{\left( 1-x\right) \left(
				x^{\beta -1}\right) ^{2}}{\left( 1-x^{\beta }\right) ^{2}}\right)  \\
			&\overset{sign}{=}&\frac{x^{\beta -1}}{1-x^{\beta }}\left( 1+\left( 1-\beta
			\right) \frac{1-x}{x}\right) +\beta \left( \gamma -1\right) \frac{\left(
				1-x\right) \left( x^{\beta -1}\right) ^{2}}{\left( 1-x^{\beta }\right) ^{2}}
			\geq 0,
		\end{eqnarray*}%
		if $0<\beta \leq 1$ and $0<\gamma \leq 1$, i.e., $F(t,\alpha ,\beta ,\gamma )
		$ is convex in $\alpha$ when $0<\beta,\gamma \leq 1$.
		
		Secondly, we study the monotony and the
		convexity/concavity of the distribution function with respect to $\beta $.
		In this case,%
		\begin{equation*}
		\frac{\partial F(t,\alpha ,\beta ,\gamma )}{\partial \beta }=-\gamma \left(
		1-\left( 1-H^{\alpha }(t)\right) ^{\beta }\right) ^{\gamma -1}\left(
		1-H^{\alpha }(t)\right) ^{\beta }\log \left( 1-H^{\alpha }(t)\right) \geq 0,
		\end{equation*}%
		then $F(t,\alpha ,\beta ,\gamma )$ is increasing in $\beta $. On the other
		hand, the second partial derivative is%
		\begin{eqnarray*}
			\frac{\partial ^{2}F(t,\alpha ,\beta ,\gamma )}{\partial \beta ^{2}}%
			&\overset{sign}{=}&\left( \gamma -1\right) \left( 1-\left( 1-H^{\alpha
			}(t)\right) ^{\beta }\right) ^{\gamma -2}\left( \left( 1-H^{\alpha }(t)\right)
			^{\beta }\log \left( 1-H^{\alpha }(t)\right) \right) ^{2}- \\
			&&-\left( 1-\left( 1-H^{\alpha }(t)\right) ^{\beta }\right) ^{\gamma -1}\left(
			1-H^{\alpha }(t)\right) ^{\beta }\left( \log \left( 1-H^{\alpha }(t)\right) \right)
			^{2} \\
			&\overset{sign}{=}&\left( \gamma -1\right) \left( 1-H^{\alpha }(t)\right)
			^{\beta }-\left( 1-\left( 1-H^{\alpha }(t)\right) ^{\beta }\right)  \\
			&=&\gamma \left( 1-H^{\alpha }(t)\right) ^{\beta }-1\leq 0,
		\end{eqnarray*}%
		if $0<\gamma \leq 1$. Hence, $F(t,\alpha ,\beta ,\gamma )$ is concave when $%
		\gamma \leq 1$.

		Thirdly, we study the monotony and the convexity/concavity of
		the distribution function with respect to $\gamma $. In this case,%
		\begin{equation*}
		\frac{\partial F(t,\alpha ,\beta ,\gamma )}{\partial \gamma }=\left(
		1-\left( 1-H^{\alpha }(t)\right) ^{\beta }\right) ^{\gamma }\log \left( 1-\left(
		1-H^{\alpha }(t)\right) ^{\beta }\right) \leq 0,
		\end{equation*}%
		and the function $F(t,\alpha ,\beta ,\gamma )$ is decreasing in $\gamma $.
		Taking the second partial derivative with respect to $\gamma $, we get%
		\begin{equation*}
		\frac{\partial F(t,\alpha ,\beta ,\gamma )}{\partial \gamma }=\left(
		1-\left( 1-H^{\alpha }(t)\right) ^{\beta }\right) ^{\gamma }\left( \log \left(
		1-\left( 1-H^{\alpha }(t)\right) ^{\beta }\right) \right) ^{2}\geq 0,
		\end{equation*}%
		so $F(t,\alpha ,\beta ,\gamma )$ is convex in $\gamma $ for any $\alpha,\beta>0$. 
		\item[viii)] Let us consider the distribution function defined in \eqref{ELS}.
		It
		is easy to check that $F(t,\lambda ,\alpha ,\theta )$ is decreasing in $%
		\alpha $ and%
		\begin{equation*}
		\frac{\partial \log F(t,\lambda ,\alpha ,\theta )}{\partial \alpha }=\log
		H\left( \frac{t-\lambda }{\theta }\right) 
		\end{equation*}%
		is a constant function with respecto to $\alpha $. Next, we study the
		monotony and the log-convexity/log-concavity of the distribution function
		with respect to $\lambda $. In this case,%
		\begin{equation*}
		\frac{\partial F(t,\lambda ,\alpha ,\theta )}{\partial \lambda }=-\frac{%
			\alpha }{\theta }H^{\alpha }\left( \frac{t-\lambda }{\theta }\right) 
		\tilde{r}\left( \frac{t-\lambda }{\theta }\right) \leq 0,
		\end{equation*}%
		where $\tilde{r}$ is the reversed hazard rate of $H$. Then, $F(t,\lambda
		,\alpha ,\theta )$ is decreasing in $\lambda $. On the other hand, we have 
		\begin{equation*}
		\frac{\partial \log F(t,\lambda ,\alpha ,\theta )}{\partial \lambda }=-\frac{%
			\alpha }{\theta }\tilde{r}\left( \frac{t-\lambda }{\theta }\right) .
		\end{equation*}%
		Therefore, $F(t,\lambda ,\alpha ,\theta )$ is log-concave in $\lambda $ if $%
		\tilde{r}\left( \frac{x-\lambda }{\theta }\right) $ is increasing in $%
		\lambda $, which is equivalent to $\tilde{r}$ decreasing in $t$, since 
		\begin{equation*}
		\frac{\partial \tilde{r}\left( \frac{t-\lambda }{\theta }\right) }{%
			\partial \lambda }=-\frac{1}{\theta }\tilde{r}^{\prime }\left( \frac{%
			t-\lambda }{\theta }\right) \text{.}
		\end{equation*}%

		Finally, we investigate the monotony and the log-convexity/log-concavity of
		the distribution function with respect to $\theta $. It is easy to check that%
		\begin{equation*}
		\frac{\partial F(t,\lambda ,\alpha ,\theta )}{\partial \theta }=-\alpha
		\left( \frac{t-\lambda }{\theta ^{2}}\right) H^{\alpha }\left( \frac{%
			t-\lambda }{\theta }\right) \tilde{r}\left( \frac{t-\lambda }{\theta }%
		\right) \leq 0,
		\end{equation*}%
		i.e., $F(t,\lambda ,\alpha ,\theta )$ is decreasing in $\theta $. Now, we
		have%
		\begin{equation*}
		\frac{\partial \log F(t,\lambda ,\alpha ,\theta )}{\partial \theta }=-\alpha
		\left( \frac{t-\lambda }{\theta ^{2}}\right) \tilde{r}\left( \frac{%
			t-\lambda }{\theta }\right) \overset{sign}{=}-\left( \frac{t-\lambda }{%
			\theta }\right) ^{2}\tilde{r}\left( \frac{t-\lambda }{\theta }\right) .
		\end{equation*}%
		Hence, $\log F(t,\lambda ,\alpha ,\theta )$ is convex (concave) in $\theta $ if $%
		\left( \frac{t-\lambda }{\theta }\right) ^{2}r\left( \frac{t-\lambda }{%
			\theta }\right) $ is decreasing (increasing) in $\theta $. Note that%
		\begin{equation*}
		\frac{\partial }{\partial \theta }\left( \left( \frac{t-\lambda }{\theta }%
		\right) ^{2}\tilde{r}\left( \frac{t-\lambda }{\theta }\right) \right) =-%
		\frac{\left( t-\lambda \right) ^{2}}{\theta ^{3}}\left( 2\tilde{r}\left( 
		\frac{t-\lambda }{\theta }\right) +\frac{t-\lambda }{\theta }\tilde{r}%
		^{\prime }\left( \frac{t-\lambda }{\theta }\right) \right) \leq (\geq) 0
		\end{equation*}%
		if $x\tilde{r}\left( x\right) $ is increasing (decreasing) in $x$.
		Therefore, $F(t,\lambda ,\alpha ,\theta )$ is log-convex (log-concave) in $\theta $ when
		$x\tilde{r}\left( x\right) $ is increasing (decreasing) in $x$.
		
	\end{itemize}
\end{proof}

\end{document}